\declaretheorem[name=Remark,numberwithin=section]{remark}
\newtheorem{assumption}{Assumption}
\newcommand{\eps}{\epsilon}
\newcommand{\p}{\partial}
\newcommand{\n}{\nabla}
\newcommand{\bM}{\mathbf{M}}
\newcommand{\bF}{\mathbf{F}}
\newcommand{\bI}{\mathbf{I}}
\newcommand{\ZZ}{\mathcal{Z}}
\newcommand{\Znr}{\mathcal{Z}_{n,\mathrm{rel}}}
\renewcommand{\AA}{\mathcal{A}}
\newcommand{\tPhi}{\tilde{\Phi}}
\newcommand{\tPsi}{\tilde{\Psi}}
\newcommand{\tX}{\tilde{X}}
\newcommand{\tZ}{\tilde{Z}}
\newcommand{\CC}{\mathcal{C}}
\newcommand{\Cyl}{\text{Cyl}}
\renewcommand{\C}{\mathcal{C}}
\newcommand{\F}{\mathbf F}
\newcommand{\mf}{\mathbf}
\newcommand{\mb}{\mathbb}
\newcommand{\mc}{\mathcal}
\newcommand{\oB}{\overline{B}}
\newcommand{\Id}{\mathrm{Id}}
\newcommand{\B}{\overline{\mathbf B}}
\newtheorem{theo}{Theorem}[]
\newtheorem{lemme}[theo]{Lemma}
\newtheorem{remarque}[theo]{Remark}
\newtheorem*{theorem*}{Theorem}
\newcommand{\Core}{\text{Core}}
\newcommand{\comp}{\text{Comp}}
\title{Infinitely Many Surfaces with Prescribed Mean Curvature in the Presence of a Strictly Stable Minimal Surface}
\author[1]{Pedro Gaspar}
\author[2]{Jared Marx-Kuo}
\affil[1]{Facultad de Matem\'aticas, Pontificia Universidad Cat\'olica de Chile, Avenida Vicuña Mackenna 4860, Santiago, Chile}
\affil[2]{Department of Mathematics, Rice University, Houston, TX 77005, USA}
\affil[]{pedro.gaspar@uc.cl \ ${}^2$ jm307@rice.edu}
\date{}
\begin{document}

\maketitle

\begin{abstract}
	\noindent We construct infinitely many distinct hypersurfaces with prescribed mean curvature (PMC) for a large class of prescribing functions when $(M^{n+1}, g)$ is a closed smooth manifold containing a minimal surface that is strictly stable (or more generally, admits a contracting neighborhood). In particular, we construct infinitely many distinct PMCs when $H_n(M, \Z_2) \neq 0$, or if $(M, g)$ does not satisfy the Frankel property. 
    Our construction synthesizes ideas from Song's construction of infinitely many minimal surfaces in the non-generic setting, Dey's construction of multiple constant mean curvature surfaces, and Sun--Wang--Zhou's min-max construction of free boundary PMCs.
\end{abstract}

\tableofcontents

%





%
%
\section{Introduction} \label{Introduction}
For a closed Riemannian manifold $(M^{n+1}, g)$, the \emph{p-widths} are a collection of geometric invariants \nl 
$\{\omega_p(M, g)\}_{p\in\N}$ introduced by \cite{gromov2002isoperimetry, Gromov, gromov2010singularities}. These values serve as a nonlinear analog of the spectrum of the Laplacian but for the area functional. The $p$-widths play an essential role in many significant breakthroughs in the study of minimal hypersurfaces, 
and we refer the reader to \cite{Almgren, Pitts, MN2014Willmore, MarquesNevesMultiplicity, MarquesNevesPositive,  LiokumovichMarquesNeves, irie2018density, ZhouMultiplicity, gaspar2018allen, gaspar2019weyl, chodosh2020minimal, DeyCMCs, stevenssun} for the historical developments of this program.

The reason for the utility of the volume spectrum in the study of minimal hypersurfaces is the fact that (when $3 \leq n+1 \leq 7$) each $p$-width equals the weighted area of a collection of \emph{min-max} minimal hypersurfaces: namely, there are disjoint, connected, smooth, closed, embedded, minimal hypersurfaces $\{\Sigma_{p,j}\}_{j=1}^{N(p)}$ and positive integers $\{m_{p,j}\}_{j=1}^{N(p)}$ so that
\begin{equation}
\label{eqn:rep_intro} \omega_p(M, g) = \sum_{j=1}^{N(p)} m_{p,j}\mathrm{Area}(\Sigma_{p,j}). 
\end{equation}
An essential feature of the $p$-widths is the corresponding Weyl law due to Gromov, Guth, Liokumovich--Marques--Neves:
\begin{theorem}[\cite{LiokumovichMarquesNeves, Guth, Gromov}]
For $\omega_p(M,g)$ as above on a compact Riemannian manifold, $(M^{n+1}, g)$, there exists $a(n) > 0$ so that 
\[
\lim_{p \to \infty} \omega_p(M,g) p^{-\frac{1}{n+1}} = a(n) \mathrm{Vol}_g(M)^{\frac{n}{n+1}}
\]
\end{theorem}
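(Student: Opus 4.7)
The plan is to establish matching upper and lower asymptotic estimates on $\omega_p(M,g)\, p^{-1/(n+1)}$. Define
\[
W(U) := \limsup_{p\to\infty} \omega_p(U, g|_U)\, p^{-1/(n+1)}, \qquad w(U) := \liminf_{p\to\infty} \omega_p(U, g|_U)\, p^{-1/(n+1)}
\]
for open $U \subset M$, with $\omega_p(U)$ defined via sweepouts of relative cycles in the Almgren--Pitts sense. The goal is to show $w(M) = W(M) = a(n)\, \mathrm{Vol}_g(M)^{n/(n+1)}$ for a constant $a(n)$ depending only on dimension. A preliminary observation is scaling: under $g \mapsto \lambda^2 g$, hypersurface areas rescale by $\lambda^n$ and volume by $\lambda^{n+1}$, so the quantity $\omega_p / (\mathrm{Vol}^{n/(n+1)} p^{1/(n+1)})$ is scale invariant, which explains and fixes the particular exponents in the statement.

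The main technical input is a \emph{near-additivity} property of $w$ and $W$ under disjoint decompositions: for pairwise disjoint open $U_1,\ldots,U_k \subset M$ with $\bigcup_i \overline{U_i} = M$, one aims for
\[
\sum_{i=1}^k w(U_i) \;\leq\; w(M) \;\leq\; W(M) \;\leq\; \sum_{i=1}^k W(U_i).
\]
The superadditive (upper) inequality is obtained by a concatenation construction: given $p_i$-sweepouts of each $U_i$ of mass at most $A_i$, one builds a $(p_1+\cdots+p_k)$-sweepout of $M$ whose slices have mass bounded by $\sum_i A_i$, using products of parameter spaces together with cup-product computations to realize the required cohomology class on the resulting family. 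The subadditive (lower) inequality is more delicate: given a $p$-sweepout of $M$, one must extract $p_i$-sweepouts of $U_i$ with $\sum_i p_i$ comparable to $p$. This is the content of a Lusternik--Schnirelmann argument combined with the Almgren isomorphism $\pi_k(\mathcal{Z}_n(M;\mathbb{Z}_2)) \cong H_{n+k}(M;\mathbb{Z}_2)$ that underpins the cohomological definition of sweepouts.

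Once additivity is in hand, I would combine it with scaling and a Vitali-type cover of $M$ by small geodesic balls (each approximated by a Euclidean ball via normal coordinates) to reduce the computation to a single universal density $a(n) := W(B^{n+1}) = w(B^{n+1})$ on the Euclidean unit ball. Finiteness and positivity of $a(n)$ then follow from direct constructions: for the upper bound, sweepouts of $B^{n+1}$ built from grids of affine hyperplanes of spacing $\sim p^{-1/(n+1)}$ yield $\omega_p(B^{n+1}) = O(p^{1/(n+1)})$; for the lower bound, an isoperimetric inequality applied to level sets of any sweepout, combined with a slicing/packing argument, produces the matching estimate. The hardest step will be the subadditive direction for $w$, namely localizing a globally defined, cohomologically detecting sweepout of $M$ into essentially independent sweepouts of the pieces while controlling the parameter count; this localization is the technical heart of Liokumovich--Marques--Neves and is where most of the effort is concentrated.
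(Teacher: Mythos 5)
This theorem is cited in the paper as background (attributed to Gromov, Guth, and Liokumovich--Marques--Neves) and is not proved there, so there is no in-paper argument to compare against. Your proposal is, however, a faithful high-level summary of the actual proof structure in Liokumovich--Marques--Neves: the scale-invariance observation that pins down the exponents, the reduction via near-additivity to a universal Euclidean density $a(n)$, the concatenation/sum construction of sweepouts to control $\omega_p(M)$ from above by widths of pieces, and the Lusternik--Schnirelmann argument (using the Almgren isomorphism $\pi_1(\mathcal{Z}_n(M;\mathbb{Z}_2)) \cong H_{n+1}(M;\mathbb{Z}_2)$ and the cohomological characterization of $p$-sweepouts) to extract localized sweepouts for the lower bound.

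One terminological slip worth flagging: you have the labels reversed. The concatenation construction (building a $(\sum p_i)$-sweepout of $M$ from $p_i$-sweepouts of the pieces) yields an \emph{upper} bound on $\omega_p(M)$, i.e. the \emph{subadditive} inequality $W(M) \leq \sum_i W(U_i)$; whereas the Lusternik--Schnirelmann localization of a single $p$-sweepout of $M$ yields the \emph{lower} bound, i.e. the \emph{superadditive} inequality $w(M) \geq \sum_i w(U_i)$. You call the first superadditive and the second subadditive, which is backwards, although the descriptions of the underlying constructions attached to each label are correct. Two further refinements for completeness: the decomposition cannot be by arbitrary open sets with $\bigcup_i \overline{U_i}=M$; one needs regions with controlled boundary (e.g., small cubes or geodesic balls with smooth boundary whose total boundary area is negligible after rescaling), since the concatenation step picks up error terms proportional to the areas of the interfaces, and these must vanish as the pieces shrink. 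And Liokumovich--Marques--Neves actually carry out the density computation on the unit cube rather than the unit ball, which is a cosmetic difference but simplifies the tiling argument. With those caveats, your outline captures the genuine content, including correctly identifying the superadditive/localization direction as the technical heart of the argument.
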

\noindent The Weyl law was an important tool in the resolution of Yau's conjecture \cite{yau1982problem}: Any closed three-dimensional manifold must contain an infinite number of immersed minimal surfaces. The resolution of this conjecture was actually \textit{stronger}:
\begin{theorem}[\cite{chodosh2020minimal} \cite{song2018existence} \cite{marques2019equidistribution} \cite{MarquesNevesPositive} \cite{irie2018density} \cite{ZhouMultiplicity}] \label{YauResolution}
	On any closed manifold $(M^{n+1}, g)$ with $3 \leq n + 1 \leq 7$, there exist infinitely many embedded minimal hypersurfaces.
\end{theorem}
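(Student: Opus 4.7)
The plan is to argue by contradiction, combining the representation \eqref{eqn:rep_intro} with the Weyl law. Suppose that $(M,g)$ admits only finitely many embedded minimal hypersurfaces $\Sigma_1,\dots,\Sigma_k$ with areas $a_1,\dots,a_k$. The central observation is this: if \eqref{eqn:rep_intro} can be realized with all $m_{p,j}=1$, then since the $\Sigma_{p,j}$ are pairwise disjoint and must be drawn from the finite list $\{\Sigma_1,\dots,\Sigma_k\}$, each $\omega_p$ equals $\sum_{j\in S_p}a_j$ for some subset $S_p\subset\{1,\dots,k\}$, and so $\{\omega_p\}$ takes at most $2^k$ distinct values. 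This directly contradicts the Weyl law, which forces $\omega_p\to\infty$ at the rate $p^{1/(n+1)}$.

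First I would dispose of the bumpy case: assume every embedded minimal hypersurface of $(M,g)$ is non-degenerate. In this setting, the Marques--Neves multiplicity one conjecture, established by Zhou, upgrades \eqref{eqn:rep_intro} to unit multiplicity $m_{p,j}=1$, immediately triggering the subset-sum contradiction above. Since bumpy metrics are $C^\infty$-generic by White's theorem, this already handles a dense set of metrics, and in fact Irie--Marques--Neves deduce denseness of minimal hypersurfaces in this regime.

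For general (possibly non-bumpy) $g$ the argument must be supplemented, and this is the heart of the matter. In dimension $n+1=3$, Chodosh--Mantoulidis use the Allen--Cahn regularization and prove the corresponding multiplicity one theorem directly, bypassing the bumpy hypothesis and permitting the subset-sum argument in all closed $3$-manifolds. In higher dimensions $4\leq n+1\leq 7$, Song's non-generic construction is employed: approximate $g$ by bumpy metrics $g_i\to g$, apply the bumpy case to each, and extract convergent subsequences via Sharp-type compactness. When $(M,g)$ has degenerate minimal hypersurfaces, one uses their Jacobi fields to perform localized metric perturbations that produce genuinely new minimal hypersurfaces in the limit, preventing collapse onto a finite list.

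The main obstacle is precisely this non-bumpy step: the limit procedure might a priori collapse all of the newly-produced surfaces onto the original finite collection (with increasing multiplicity). Ruling out such collapse is exactly what the Allen--Cahn machinery in dimension three, and Song's local perturbation argument in higher dimensions, are designed to accomplish, and this is where the bulk of the technical work in the cited papers resides.
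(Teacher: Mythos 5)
The paper does not prove this theorem; it is cited as background, so there is no ``paper's own proof'' to compare against. Evaluating your sketch on its own merits: the bumpy case you describe is a valid modern synthesis. Once Zhou's multiplicity-one theorem (or, in dimension three, Chodosh--Mantoulidis) guarantees $m_{p,j}\in\{1,2\}$ with the one-sided components counted with multiplicity two, a finite list of minimal hypersurfaces would force $\{\omega_p\}$ into a finite set, contradicting the Weyl law. This is cleaner than the original Irie--Marques--Neves route, which predates multiplicity one and instead combines the countability of $\{\sum m_j a_j : m_j \ge 0\}$ with Lipschitz continuity of $\omega_p$ in the metric and a localized volume perturbation; you should note that Chodosh--Mantoulidis likewise work under a bumpiness assumption, so they do not ``bypass'' it as you claim.

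The genuine gap is in your account of the non-bumpy case. You describe Song's argument as ``approximate $g$ by bumpy $g_i\to g$, pass to the limit via Sharp compactness, and use Jacobi fields of degenerate surfaces to perturb and prevent collapse.'' This is not what Song does, and it is not clear how to make such a limiting scheme work: the a priori collapse you identify is real, and there is no known mechanism to rule it out by local Jacobi-field perturbations alone. Song's actual resolution of the non-generic case is the cylindrical-end construction: either $(M,g)$ has the Frankel property (handled by a Marques--Neves/Irie--Marques--Neves style argument), or there is a minimal hypersurface $\Sigma$ admitting a contracting neighborhood, in which case one forms $\mathrm{Cyl}(M)$ by gluing $\Sigma\times[0,\infty)$, exploits the \emph{linear} growth $\omega_{p+1}-\omega_p\ge A(\Sigma_1)$ of the widths on the cylindrical manifold (in sharp contrast to the sublinear gaps on a compact manifold), and extracts infinitely many minimal hypersurfaces in $M$ from min-max on the approximating compact manifolds $(U_\epsilon,g_\epsilon)$. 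This is precisely the machinery recalled in Section~\ref{construction} of the present paper and adapted there to the PMC setting; your sketch misses it entirely, so the non-bumpy step as written does not constitute a proof.
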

%
\noindent We also note the work of Li \cite{li2023existence} who showed the existence of infinitely many embedded minimal hypersurfaces with optimal regularity in $(M^{n+1},g)$ when $n + 1 \geq 8$ and $g$ is a generic metric, as well as the work of Li-Wang \cite{LiWang} who showed the generic regularity of minimal hypersurfaces in dimension $n+1=8$.  \nl 
\indent One may hope to prove a similar conjecture for hypersurfaces with constant mean curvature (CMC) or prescribed mean curvature (PMC):
%
\begin{conjecture} \label{PMCconjecture}
 For any $h \in C^{\infty}(M)$, there exists infinitely many distinct hypersurfaces, $\{Y_p\}$, with $H_{Y_p} = h\Big|_{Y_p}$. 
\end{conjecture}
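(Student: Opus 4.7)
The strategy is to build a prescribed-mean-curvature analog of the $p$-width machinery that resolved Yau's conjecture, and to overcome the non-generic obstruction by exploiting a strictly stable minimal surface as a barrier. Following Zhou--Zhu, Dey, and Sun--Wang--Zhou, I would work with the weighted functional
\[
\AA^h(\Omega) = \HH^n(\partial^*\Omega) - \int_\Omega h\, d\mathrm{vol}_g
\]
on Caccioppoli sets $\Omega \subset M$ and define PMC $p$-widths $\omega_p^h(M,g)$ via min-max over $p$-sweepouts in $\ZZ_n(M;\Z_2)$. Using the PMC min-max regularity/compactness package, each $\omega_p^h$ is attained by the $\AA^h$-area of an almost-embedded hypersurface $\Sigma_p$ with $H_{\Sigma_p} = h|_{\Sigma_p}$, possibly with integer multiplicity, together with an index bound.

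Next, I would establish a Weyl law for the $\omega_p^h$. Since $|\int_\Omega h| \leq \|h\|_\infty \mathrm{Vol}(M)$, the weighted widths differ from the unweighted ones by an $O(1)$ term, so the LMN asymptotics transfer and in particular $\omega_p^h \to \infty$ as $p \to \infty$ at the rate $p^{1/(n+1)}$.

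The main step is a proof by contradiction. If only finitely many distinct PMCs $Y_1,\dots,Y_N$ existed, then $\omega_p^h = \sum_j m_{p,j}\AA^h(Y_j)$ for non-negative integers $m_{p,j}$, and Weyl sublinear growth forces some $m_{p,j}\to\infty$. Synthesizing Song's non-generic minimal construction, Dey's CMC-width argument, and the free-boundary PMC tools of Sun--Wang--Zhou, the plan is to localize a modified PMC min-max to the complement $M\setminus U_\eps$ of a thin tubular neighborhood of the strictly stable minimal surface $\bGamma$. The contracting property of $\bGamma$ then acts as a barrier that confines competitors to $M\setminus\bGamma$, producing a new PMC whose $\AA^h$-value does not lie in the discrete set $\{\sum_j k_j \AA^h(Y_j) : k_j\in \mathbb{Z}_{\geq 0}\}$. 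Iterating by shrinking $\eps$ and using the unbounded Weyl growth yields infinitely many distinct PMCs.

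The principal obstacle is the barrier / no-concentration step: one must rule out that the relative PMC min-max limit is either a reparametrization of some $Y_j$ or a high-multiplicity cover of $\bGamma$ itself. The latter is the more delicate issue in the PMC (versus minimal) setting, because $h|_{\bGamma}$ need not vanish, so the PMC first-variation identity alone does not separate the limit from $\bGamma$. Strict stability of $\bGamma$ must be used quantitatively to give a local $\AA^h$-lower bound on competitors supported near $\bGamma$, forcing each min-max value to strictly exceed what any covering of $\bGamma$ could contribute, and thereby distinguishing the new PMC from the $Y_j$ list. Without a strictly stable minimal surface (or a contracting neighborhood playing the same role), this barrier step appears to require genuinely new ideas, which is why the conjecture as stated remains open in full generality.
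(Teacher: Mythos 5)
This statement is a \emph{conjecture}, and the paper does not prove it — it only establishes partial progress (Theorems \ref{mainTheorem}, \ref{ZeroBoundaryTheorem}, \ref{mainTheoremClosed}) under significant restrictions on both $h$ (Assumptions \ref{hCompactSupport} or \ref{hZeroBoundary}) and $M$ (existence of a strictly stable/contracting minimal hypersurface). You acknowledge this at the end of your proposal, but your sketched strategy has a gap that the paper itself points out explicitly and is designed to circumvent.

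The gap is at the very first step: the ``PMC $p$-widths $\omega_p^h$ via min-max over $p$-sweepouts in $\ZZ_n(M;\Z_2)$'' are not well-defined in the way you describe. The functional $\AA^h$ takes a Caccioppoli set $\Omega \in \CC(M)$ as input, not a cycle; unlike the perimeter, it is \emph{not} invariant under complementation, so it does not descend to $\ZZ_n(M;\Z_2)$. The topological nontriviality that makes $\omega_p>0$ (the $\R\P^\infty$ structure of $\ZZ_n(M;\Z_2)$ arising from the double cover $\partial\colon\CC(M)\to\ZZ_n$) is exactly what is lost when you lift to $\CC(M)$, which is contractible. This is stated in \S\ref{Introduction} immediately after the conjecture as ``the main difficulty.'' Your later contradiction step is also unsound: even if finitely many PMCs $Y_1,\dots,Y_N$ existed, the representation $\omega_p^h=\sum_j m_{p,j}\AA^h(Y_j)$ has no analogue of Almgren--Pitts regularity to support it, because integer multiples $m\cdot Y_j$ of an $h$-PMC with $h\not\equiv 0$ are not themselves $h$-PMCs (the sign of $H$ relative to the outer normal alternates), so high multiplicity cannot absorb the growth the way it does for minimal surfaces.

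The paper takes a genuinely different route. Instead of defining a PMC volume spectrum, it keeps the ordinary $p$-widths but changes the manifold: it passes to $\comp(M\setminus\Sigma)$, attaches a cylindrical end following Song to force the widths to grow \emph{linearly} (Theorem \ref{CylindricalWeylThm}: $\omega_{p+1}-\omega_p\geq A(\Sigma_1)$ on $\Cyl(M)$), so that the gap never shrinks below $2\|h\|_{L^1}$. It then runs Dey's $(X,Z)$-suspension min-max on compact approximations $(U_\eps,g_\eps)$ with Sun--Wang--Zhou's free-boundary PMC theory, and uses diameter estimates (\cite{chambers2024}) plus a tethering/maximum-principle argument and a no-pinching monotonicity lemma to keep the resulting PMCs in the core as $\eps\to 0$. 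Your ``localize to $M\setminus U_\eps$'' step gestures in this direction, but without the cylindrical-end/linear-growth mechanism (or an equivalent), the sublinear Weyl growth of $\omega_p(M)$ on a fixed compact manifold defeats the construction for large $p$, which is precisely the obstruction Dey \cite{DeyCMCs} already ran into and that the paper's construction is built to remove.
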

\noindent The main difficulty towards conjecture \ref{PMCconjecture} is that there is no analogy of the $p$-widths for the prescribed mean curvature functional: given $\Omega \in \CC(M)$ a Cacciopoli set and $h \in C^{\infty}(M)$, we define
\begin{align} \nonumber
\AA^h &: \CC(M) \to \R \\ \label{PMCFunctionalDef}
\AA^h(\Omega) &= \text{Per}(\Omega) - \int_{\Omega} h
\end{align}
when $h \equiv c > 0$ we denote this as $\AA^c$. Indeed, the fact that $\omega_p \neq 0$ for all $p$, can be seen by the non-trivial topology of $\mathcal{Z}_n(M; \Z_2)$, as it is weakly homotopic to $\R \P^{\infty}$ and $\pi_1(\mathcal{Z}_n(M; \Z_2)) = \pi_1(\R \P^{\infty}; \Z_2) = \Z_2$. In some sense, this comes from the fact that Cacciopoli sets produce a double cover of $\mathcal{Z}_n(M,\Z_2)$, i.e. $\partial \Omega = \partial \Omega^c \in \mathcal{Z}_n(M;\Z_2)$, and the perimeter functional is invariant under taking complements (see \cite[\S 5]{MarquesNevesMultiplicity} for details). Contrast this with equation \ref{PMCFunctionalDef}, as in general $\AA^h(\Omega) \neq \AA^h(\Omega^c)$ for all $\Omega \in \CC(M)$. \newline 
\indent Despite this roadbloack, partial progress has been made towards conjecture \ref{PMCconjecture} for the case of $h \equiv c > 0$. In \cite{DeyCMCs}, Dey used the p-widths to construct many c-CMC surfaces for $c$ small. Let $W_0$ denote the \textit{one parameter Almgren-Pitts width} of a manifold (see \cite[\S 2.3]{DeyCMCs} and \S \ref{MinMaxBackground} below). 
\begin{theorem}[Thm 1.1, \cite{DeyCMCs}] \label{DeyCMCThm}
Let $(M^{n+1}, g)$ be a closed Riemannian manifold, $n+1 \geq 3$. Suppose $p \in \mathbb{N}$ such that $\omega_p(M,g) < \omega_{p+1}(M,g)$, $c \in \mathbb{R}^+$ such that $c \cdot \text{Vol}_g(M) < \omega_{p+1}(M,g) - \omega_p(M,g)$ and $\eta \in \mathbb{R}$ is arbitrary. Then there exists an $\Omega \in \CC(M)$ such that $\p \Omega$ is a closed c-CMC (with respect to the inward unit normal) hypersurface with optimal regularity and $\omega_p(M,g) < \mathcal{A}^c(\Omega) < \omega_p(M,g) + W_0 + \eta$.
\end{theorem}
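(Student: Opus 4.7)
My plan is to run an Almgren--Pitts-type min-max for the CMC functional $\AA^c$ on $\CC(M)$, driven by a $(p+1)$-parameter family of Caccioppoli sets built from an optimal $p$-sweepout and a path from $\emptyset$ to $M$. Fix a $p$-sweepout $\Phi : X^p \to \ZZ_n(M;\Z_2)$ with $\sup_x \mathbf{M}(\Phi(x)) \le \omega_p + \eta/3$, a lift $\tilde\Phi : X^p \to \CC(M)$ (via the double-cover structure of $\p : \CC(M) \to \ZZ_n(M;\Z_2)$), and a one-parameter family $\{\Omega_t\}_{t \in [0,1]} \subset \CC(M)$ with $\Omega_0 = \emptyset$, $\Omega_1 = M$, and $\sup_t \mathrm{Per}(\Omega_t) \le W_0 + \eta/3$. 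Combine these via symmetric difference,
\[
\Psi : X^p \times [0,1] \to \CC(M), \qquad \Psi(x,t) := \tilde\Phi(x) \,\triangle\, \Omega_t,
\]
so that $\Psi(x,0) = \tilde\Phi(x)$, $\Psi(x,1) = \tilde\Phi(x)^c$, and the $t$-direction realizes the generator of $\pi_1(\ZZ_n(M;\Z_2)) = \Z_2$ after composition with $\p$. Consequently $\p \circ \Psi$ represents a $(p+1)$-sweepout class in the Almgren--Pitts sense.

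Set $L_c := \inf_{\Psi' \sim \Psi} \sup_{(x,t)} \AA^c(\Psi'(x,t))$, where the infimum is over families in $\CC(M)$ homotopic to $\Psi$. For the upper bound, perimeter subadditivity under symmetric difference combined with $\AA^c \le \mathrm{Per}$ gives
\[
\AA^c(\Psi(x,t)) \le \mathrm{Per}(\tilde\Phi(x)) + \mathrm{Per}(\Omega_t) \le \omega_p + W_0 + \tfrac{2\eta}{3},
\]
so $L_c < \omega_p + W_0 + \eta$. For the lower bound, any competitor $\Psi'$ homotopic to $\Psi$ projects under $\p$ to a $(p+1)$-sweepout of $\ZZ_n(M;\Z_2)$, hence $\sup \mathrm{Per}(\Psi') \ge \omega_{p+1}$, and therefore
\[
\sup_{(x,t)} \AA^c(\Psi'(x,t)) \ge \omega_{p+1} - c \cdot \mathrm{Vol}_g(M) > \omega_p,
\]
where the strict inequality is exactly the arithmetic hypothesis on $c$. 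This forces $L_c > \omega_p$.

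I would then invoke the CMC min-max regularity theory (Zhou--Zhu, in the constant-$h$ case, along with the optimal-regularity refinements cited in the paper) to extract a Caccioppoli set $\Omega \in \CC(M)$ achieving $\AA^c(\Omega) = L_c$, whose boundary $\p\Omega$ is a smooth closed $c$-CMC hypersurface with the inward unit normal convention and optimal regularity. The two-sided estimate on $L_c$ transfers directly to $\AA^c(\Omega)$, proving the theorem. The principal obstacle I expect is the strict lower bound $L_c > \omega_p$: one must verify that $\p\circ\Psi$ genuinely represents a nontrivial $(p+1)$-sweepout class even though the $t$-endpoints give Caccioppoli sets that are complements of each other (rather than the empty set twice), and that the min-max competition in $\CC(M)$ preserves this class under $\p$. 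This is where the double-cover relationship $\CC(M) \to \ZZ_n(M;\Z_2)$ and the gap hypothesis $c\cdot\mathrm{Vol}_g(M) < \omega_{p+1} - \omega_p$ together do the essential work, converting topological nontriviality into the required strict numerical gap.
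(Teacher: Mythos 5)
This theorem is not proved in the paper --- it is quoted directly from Dey \cite{DeyCMCs} --- but the paper does reproduce Dey's suspension construction in \S \ref{DeySuspension}, and your proposal differs from it in a way that does not work.

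The central gap is topological. You build a family on $X^p \times [0,1]$ by symmetric difference, and claim that after composing with $\partial$ it represents a $(p+1)$-sweepout class. It cannot. First, the lift $\tilde\Phi \colon X^p \to \CC(M)$ you invoke does not exist: since $\Phi$ is a $p$-sweepout, the induced map $\Phi_* \colon \pi_1(X) \to \pi_1(\ZZ_n(M;\Z_2)) = \Z_2$ is surjective, so $\Phi$ lifts only after passing to the double cover $\rho\colon\tilde X \to X$ (as in \S \ref{DeySuspension}: $\partial\circ\tilde\Phi = \Phi\circ\rho$). Second, and more seriously, even if you work on $\tilde X \times [0,1]$ and glue the ends to form $\tilde X \times S^1$, the pullback $(\partial\circ\Psi)^*(\bar\lambda^{p+1})$ vanishes identically. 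Indeed, the double cover $\tilde X \to X$ is classified by $w_1 = \Phi^*\bar\lambda$, so the Gysin sequence gives $\ker\rho^* = \mathrm{im}(\,\cdot\,\cup \Phi^*\bar\lambda)$, and hence $\rho^*(\Phi^*\bar\lambda^p) = \rho^*\big(\Phi^*\bar\lambda^{p-1}\cup\Phi^*\bar\lambda\big)=0$ for all $p \ge 1$. Writing $(\partial\circ\Psi)^*\bar\lambda = \rho^*\Phi^*\bar\lambda + \beta$ with $\beta$ the $S^1$-class and $\beta^2=0$, every term of $(\rho^*\Phi^*\bar\lambda + \beta)^{p+1}$ carries a factor $(\rho^*\Phi^*\bar\lambda)^p=0$. (For $p=1$ one can see this directly: no map $T^2 \to \ZZ_n(M;\Z_2)$ can be a $2$-sweepout, since every $a \in H^1(T^2;\Z_2)$ has $a\cup a = 0$.) This is precisely the obstruction that forces Dey to replace the cylinder by the \emph{suspension} $S\tilde X/\Z_2$: his family uses intersections $\tilde\Phi(x)\cap\Lambda(t)$ and complements so that the cone ends map to the constant sets $\emptyset$ and $U_\eps$, which is what lets the map descend to the suspension and makes the sweepout claim (Lemma \ref{lem:PsiPrimeSweep}, Dey's Claim 4.3) go through. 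The symmetric-difference cylinder cannot be collapsed this way because $\Psi(\cdot,0)$ and $\Psi(\cdot,1)$ are not constant.

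There is a secondary gap: you run the min-max over free homotopies of $\Psi$ and assert $L_c > \omega_p$, but the existence and regularity theorem you need (Theorem \ref{thm:index bound for all g}, taken from \cite{SunWangZhou}/\cite{ZhouMultiplicity}) is formulated for an $(X,Z)$-relative homotopy class with the nondegeneracy condition $\mathbf{L}^h(\Pi) > \max\{\max_{z\in Z}\AA^h(\Phi_0(z)),\,0\}$. Dey works with $Z = \{|t|\le 1/2\}\subset S\tilde X$ (equivalently $\tilde Z$ of \S \ref{DeySuspension}), on which his family agrees with $\tilde\Phi$; the hypothesis $c\cdot\mathrm{Vol}(M) < \omega_{p+1}-\omega_p$ is exactly what certifies this gap relative to $Z$, via the same computation you sketch. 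Without specifying $Z$ and verifying the relative gap, there is no applicable min-max theorem to extract the regular $c$-CMC from $L_c$. Your numerical estimates (the subadditivity $\mathrm{Per}(A\triangle B)\le \mathrm{Per}(A)+\mathrm{Per}(B)$ and the inequality $\AA^c(\Omega)\ge\mathrm{Per}(\Omega)-c\,\mathrm{Vol}(M)$) are fine; the missing ingredients are the suspension and the relative homotopy framework.
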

\noindent Dey further shows that because of the Weyl Law, asymptotically $\omega_{p+1} - \omega_p > 0$. However, due to the \textit{sublinear} growth (which was important in \cite{MarquesNevesPositive, song2018existence, irie2018density}), 
\begin{equation} \label{widthDifference}
\lim_{p \to \infty} \omega_{p+1} - \omega_p = 0
\end{equation}
and so for fixed $c > 0$, Dey's construction only produces finitely many $c$-CMC surfaces. However, he does show \cite[Theorem 1.2]{DeyCMCs} that given $c > 0$, there exists at least $\gamma_0 c^{-\frac{1}{n+1}}$ many closed c-CMC surfaces. We also mention his related work \cite{DeyWidths} showing a subadditivity law for the $p$-widths. \newline 
\indent Dey's construction is part of a larger effort to construct arbitrarily many $c$-CMC surfaces for any fixed $c > 0$. We mention the work of Zhou--Zhu \cite{zhou2019min}, which was a foundational in Dey's proof and showed the existence of at least one nontrivial smooth, closed, almost embedded $c$-CMC surface in an arbitrary closed manifold. We also mention the work of Pacard--Xu \cite{pacard2009constant}, who showed that for $\Lambda_M > 0$, the Lusternik-Shnirelman category of $M$, there are at least $\Lambda_M$, $c$-CMC surfaces for all $c$ sufficiently large. \nl 
\indent However, it is currently unknown if there exists even $2$ distinct c-CMC surfaces (or even constant curvature geodesics) for any $c > 0$ - this is known as the ``Twin Bubble Conjecture," attributed to Zhou \cite{zhou2022mean}, as a higher dimensional analogue of Arnold's conjecture \cite{arnold2004arnold}. In a different direction, Mazurowski--Zhou \cite{mazurowski2023half, mazurowski2024infinitely, mazurowski2024alternative} have shown that generically, in low dimensions, there exist infinitely many constant mean curvature hypersurfaces, which bound a Caccioppoli set with volume equal to $\frac{1}{2} \text{Vol}_g(M)$. We note however, that the value of $c$ is not uniform across all of these sets. Very cleverly, Mazurowski--Zhou exploit the $\Z_2$ invariance of half-volume Caccioppoli sets (i.e. if $\text{Vol}(\Omega) = \frac{1}{2} \text{Vol}(M)$, then $\text{Vol}(\Omega^c) = \frac{1}{2} \text{Vol}(M)$), to define a half-volume spectrum, $\{\tilde{\omega}_p\}$ and recreate an analogous min-max program to produce infinitely many half-volume Caccioppoli sets with boundary having constant mean curvature. \newline 
\indent In the line of producing PMCs, Zhou--Zhu \cite{ZhouZhu} had foundational work for producing PMCs via 1-parameter min-max in closed manifolds. The existence and regularity of PMCs via min-max on relative homotopy classes was vital to Zhou's resolution of the ``Multiplicity-One" conjecture of Marques-Neves \cite{MN2014Willmore, MarquesNevesMultiplicity} for generic metrics \cite[Thm A]{ZhouMultiplicity}. Sun--Wang--Zhou \cite{SunWangZhou} then extend many of these ideas to the setting of manifolds with boundary and free boundary minimal surfaces - see \S \ref{PMCMinMaxBackground} for more details. In terms of regularity, Sarnataro--Stryker \cite{sarnataro2023optimal} proved optimal $C^{1,1}$ regularity for PMCs over isotopy classes, producing PMC $2$-spheres in the round $3$-sphere, for an open dense set of prescribing functions of sufficiently small $L^\infty$ norm. In the non-compact setting, Mazurowski \cite{mazurowski2022prescribed} shows the existence of PMC hypersurfaces in $\mathbb{R}^n$ with prescribing function asymptotic to a positive constant near infinity. Very recently, Stryker \cite{stryker2024min} shows the existence of PMCs with sign constraints outside a compact set via min-max with the obstacle problem. \medskip \newline 
\noindent In this paper, we expand Dey's work by asking:
\begin{enumerate}
\item \label{LinearGrowthQ} What if $\omega_{p+1}(M,g) - \omega_p(M,g) \geq K > 0$ for all sufficiently large $p$?
\item \label{PMCoverCMC} What if we consider prescribed mean curvature (PMC) surfaces instead of constant mean curvature (CMC) surfaces?
\end{enumerate}
Question \ref{LinearGrowthQ} can be resolved when $(M^{n+1}, g)$ is a non-compact manifold with finitely many cylindrical ends (\cite[Theorem 9]{song2018existence}), and such manifolds were essential in Song's resolution of Yau's conjecture. Similarly, question \ref{PMCoverCMC} (in the compact setting) is essentially contained in Dey's work \cite{DeyCMCs} if one considers a prescribing function $h \in C^{\infty}(M)$ and  replaces the condition of $c \text{Vol}(M) \in [0, \omega_{p+1} - \omega_p]$ with $\|h\|_{L^1(M)} \in [0, \omega_{p+1} - \omega_p]$. Then, the regularity theory of Zhou--Zhu \cite{ZhouZhu} and Sun--Wang--Zhou \cite{SunWangZhou} demonstrates the existence of a PMC with optimal regularity for every $p$ such that $\|h\|_{L^1(M)} \in [0, \omega_{p+1} - \omega_p]$. To synthesize the two ideas, we will construct a sequence of compact manifolds which are converging to a manifold with a cylindrical end, while applying Dey's construction on each manifold in the sequence. Via novel diameter estimates and barrier arguments, we will show that the construction produces closed PMCs (as opposed to free boundary PMCs) and converge nicely in the limit.
%
\subsection{Main Results} \label{MainResultsSection}
Inspired by Song's cylindrical Weyl law and Dey's construction of $c$-CMC surfaces, we show the existence of infinitely many PMC surfaces on closed manifolds, when it is possible to attach a cylindrical end a la Song's construction and when the prescribing function $h$ satisfies certain restrictions. The construction begins by working on manifolds with boundary. \newline 
%
\indent We will consider a manifold $(M^{n+1}, g)$, such that $\Sigma = \partial M$, is a separating, strictly stable minimal surface. Such a hypersurface admits a ``contracting neighborhood" to one side, i.e. there exists a neighborhood $U \supseteq \Sigma$ and a smooth embedding $\Phi: \Sigma \times [0, \tilde{t}] \to U$ such that $H_t = H_{\Phi(\cdot, t)}$ has mean curvature vector pointing towards $\Phi(\cdot,0)(\Sigma)=\Sigma$. See \S \ref{construction} for more details. Let $\Sigma = \sqcup_{i=1}^m \Sigma_i$ be the decomposition of $\Sigma$ into its connected components, with $A(\Sigma_1) \geq A(\Sigma_i)$ for all $i$, where we denote by $A(\cdot)$ the area functional. \nl 
\indent We will also consider prescribing functions which are ``good" in the sense of Zhou--Zhu \cite{zhou2019min}, which we recall here. Let $h: M^{n+1} \to \R$ be a smooth function on a Riemannian manifold $(M^{n+1}, g)$. We say that $h \in \mathcal{S}^*(g)$ if at least one of the following hold
\begin{enumerate}
    \item\label{morseCondition} $h$ is morse, $\Sigma_0:= \{h = 0\}$ is a smooth closed hypersurface and the mean curvature of $\Sigma_0$ vanishes to at most finite order
    \item The zero set of $h$ is contained in a countable union of connected, smoothly embedded $(n-1)$-dimensional submanifolds.
\end{enumerate}
While condition \ref{morseCondition} is central to the work of Zhou \cite{ZhouMultiplicity} and Sun--Wang--Zhou \cite{SunWangZhou}, whose results we make use of, we note that most of their theorems extend to any of the above conditions listed in the definition of $\mathcal{S}^*(g)$ (see \S \ref{MinMaxBackground}). We also note that while the definition of $\mathcal{S}^*(g)$ is stated for closed manifolds, it also makes sense for manifolds with boundary or open manifolds. \nl 
\indent In addition, we say that a metric $g$ is bumpy if no minimal surface in $M$ admits a Jacobi field. By the bumpy metrics theorem of White \cite{white2017bumpy, white1991space}, this condition is Baire generic. Furthermore, we let $W_0$ denote the $1$-parameter Almgren--Pitts width of $(M,g)$ (see \S \ref{MinMaxBackground}). \medskip 
Our main result will consider prescribing functions which satisfy the following conditions: 
\begin{assumption} \label{hZeroBoundary} 
Assume that $h: M \to \R$ satisfies
\begin{enumerate} 
\item $\|h\|_{L^1(M,g)} < A(\Sigma_1)/2$ 
\item\label{vanishingBoundary}  $h \in C^{\infty}(M)$, $ h\Big|_{\Sigma} = \partial_{\nu} h \Big|_{\Sigma} = 0$, and $\int_{M} h > 0$ 
%
\item\label{goodnessAssumption2} $h \Big|_{M \setminus \Sigma} \in \mathcal{S}^*(g)$ 
%
\end{enumerate}
\end{assumption}
\begin{remark}
We compare condition \ref{goodnessAssumption2} to the following generic conditions of Sun--Wang--Zhou for a manifold with boundary $(M^{n+1}, \partial M, g)$: $h$ is a Morse function and $\Sigma_0 = \{h = 0\}$ is a compact, smoothly embedded hypersurface so that
\begin{enumerate}
\item[(a)] $\Sigma_0$ is transverse to $\partial M$ and the mean curvature of $\Sigma_0$ vanishes to at most finite order
\item[(b)] $\{x \in \partial M \; : \; H_{\partial M}(x) = h(x) \text{ or } H_{\partial M}(x) = -h(x)\}$ is contained in an $(n-1)$-dimensional submanifold of $\partial M$. 
\end{enumerate}
In our setting, it does not make sense to enforce conditions (a) or (b), as $h\Big|_{\Sigma} = 0$ so that neither are true. Sun--Wang--Zhou choose these conditions to prevent min-max PMCs from accumulating or sticking to the boundary, but we will show separately that our constructed min-max PMCs will be closed and never intersect the boundary (see \S \ref{NoSigmaSection} or \S \ref{quantMPSection}). 
\end{remark}
\noindent With these assumptions, we prove:
\begin{theorem} \label{ZeroBoundaryTheorem}
Suppose that $(M^{n+1}, g)$, $3 \leq n+1 \leq 7$, is a manifold with boundary so that $\Sigma = \partial M$ is an embedded strictly stable minimal surface. For any $h$ satisfying Assumptions \ref{hZeroBoundary}, there exist infinitely many distinct, almost embedded with optimal regularity, \emph{multiplicity one} hypersurfaces, $\{Y_{h,p}\}_{p=1}^{\infty}$, with mean curvature $h$ and 
\begin{equation} \label{AreaBounds}
\forall p, \qquad (p+1) \cdot A(\Sigma_1) - 2 \|h\|_{L^1(M)} \leq A(Y_{h,p}) \leq p \cdot A(\Sigma_1) + W_0 + A(\Sigma) + C \cdot p^{1/(n+1)} + 2 \|h\|_{L^1(M)}
\end{equation}
for some $C = C(M, g) > 0$. Moreover, each $Y_{h,p}$ is disjoint from $\Sigma$ and $\mathrm{Index}(\mathcal{R}(Y_{h,p})) \leq p + 1$.
\end{theorem}
We recall the definition of being ``almost embedded" with multiplicity one in \S \ref{MinMaxBackground}, \ref{almostEmbeddingDef}. In general, an almost embedded PMC, $Y_h$, may have \textbf{density $2$} at certain points, and in particular, along an open (codimension $0$) subset $U \subseteq Y_h$. For such a surface, $\mathcal{S}(Y_h)$ denotes the ``touching set" and $\mathcal{R}(Y) = Y \backslash \mathcal{S}(Y)$ denotes the ``regular set" where $Y$ has density $1$. See definitions \ref{almostEmbeddingDef} and figure \ref{fig:dense2}.
\begin{figure}[h!]
    \centering
    \includegraphics[scale=0.35]{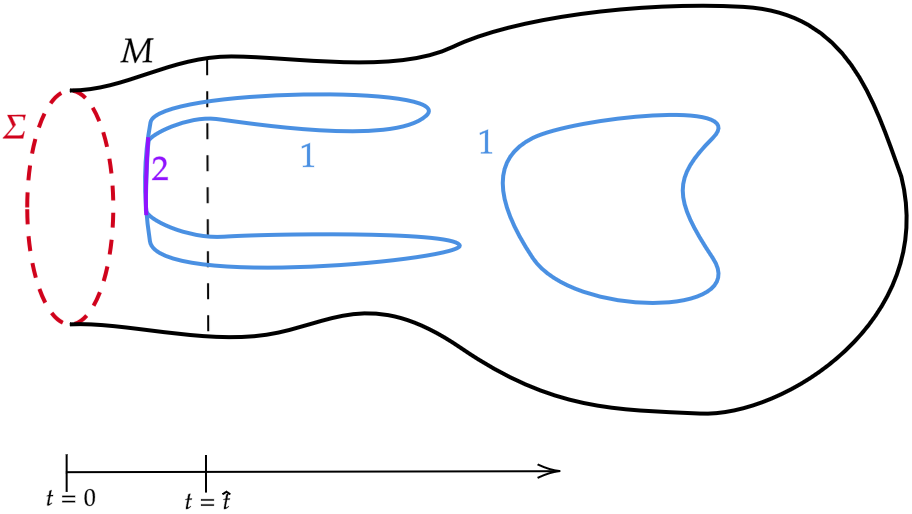}
    \caption{Example of a PMC with large (codimension $0$) touching set}
    \label{fig:dense2}
\end{figure}
We emphasize that figure \ref{fig:dense2} \textit{does not} occur in the context of theorem \ref{ZeroBoundaryTheorem}, as the phrase ``almost embedded with optimal regularity" means that each $Y_{h,p}$ is density one away from a small set of \textit{$(n-1)$-Hausdorff dimension} (codimension $1$, see definition \ref{almostEmbedOptimal}). In particular, there will be no open set with density $2$ as pictured in figure \ref{fig:dense2}, but a smaller set of points of density $2$ may occur for the PMCs produced by theorem \ref{ZeroBoundaryTheorem}. \nl 
\indent While the restrictions of assumption \ref{hZeroBoundary} appear stringent, we note the following corollary:
\begin{corollary}
Suppose $h: M \to \R$ with $h\Big|_{\Sigma} = \partial_{\nu} h \Big|_{\Sigma} = 0$, $h \Big|_{M \backslash \Sigma} > 0$, and $||h||_{L^1(M)} < A(\Sigma_1)/2$. Then there are infinitely many $h$-PMCs with the optimal regularity described in theorem \ref{ZeroBoundaryTheorem}.
\end{corollary}
\indent In order to prove theorem \ref{ZeroBoundaryTheorem}, we approximate functions satisfying assumptions \ref{hZeroBoundary} with prescribing functions which are compactly supported away from $\Sigma$ and are ``good" in the complement of the foliating neighborhood:
\begin{assumption} \label{hCompactSupport} 
Assume that $h \colon M \to \R$ satisfies

\begin{enumerate}
    \item \label{LOneBound} $\|h\|_{L^1(M,g)} < A(\Sigma_1)/2$
    \item \label{LessThanSlice} For some $0 < \hat{t} \leq \tilde{t}$, we have that $|h| \Big|_{\Sigma \times r} < H_r$ for all $r \in (0, \hat{t}]$
    \item \label{compactPositiveIntegral} $h \in C_c^{\infty}(M)$, $\int_{M} h > 0$
    \item \label{goodnessAssumption} For $\hat{t}$ as above, define $M_{\hat{t}} = U^c \cup \{p \; | \; t(p) \geq \hat{t}\}$. Then $h \in S^*(M_{\hat{t}}, g\Big|_{M_{\hat{t}}})$
\end{enumerate}
\end{assumption}
\noindent With these assumptions, we prove the following:
\begin{theorem} \label{mainTheorem}
Suppose that $(M^{n+1}, g)$, $3 \leq n+1 \leq 7$, is a manifold with boundary so that $\Sigma = \partial M$ is an embedded, strictly stable minimal surface. For any $h \in C^{\infty}(M)$ satisfying conditions \ref{hCompactSupport}, there exist infinitely many distinct, almost embedded, \emph{multiplicity one} hypersurfaces, $\{Y_{h,p}\}_{p=1}^{\infty}$, with mean curvature $h$ and
the same area bounds as theorem \ref{ZeroBoundaryTheorem}. Moreover, each $Y_{h,p}$ is disjoint from $\Sigma$ and $\mathrm{Index}(\mathcal{R}(Y_{h,p})) \leq p + 1$.
\end{theorem}
\noindent  In contrast to theorem \ref{ZeroBoundaryTheorem}, the compact support of $h$ in theorem \ref{mainTheorem} allows for a large (potentially codimension $0$) touching set. This is pictured in figure \ref{fig:dense2}, but by condition \ref{goodnessAssumption}, a codimension $0$ touching set can only occur in $Y \cap U$. \nl  
\indent In the closed setting, we can apply theorems \ref{ZeroBoundaryTheorem} \ref{mainTheorem} as long as the manifold admits an embedded, strictly stable minimal surface. To do this, we look at the metric completion of $M \backslash \Sigma$, which we recall as follows: Given $\Sigma \subseteq M$ an embedded hypersurface, let $\text{Comp}(M \backslash \Sigma)$ denote the metric completion of $M \backslash \Sigma$ (see figure \ref{fig:metriccompletion}). When $\Sigma$ is two-sided separating, $\comp(M \backslash \Sigma) = M^+ \sqcup M^-$ has two disjoint components with $\partial M^{\pm} \cong \Sigma$. When $\Sigma$ is two-sided non-separating, $\comp(M \backslash \Sigma)$ is connected with $\partial \comp(M \backslash \Sigma)= \Sigma \sqcup \Sigma$. When $\Sigma$ is one-sided, $\comp(M \backslash \Sigma)$ is connected with $\partial \comp(M \backslash \Sigma) = \tilde{\Sigma}$, the double cover of $\Sigma$. See figure \ref{fig:metriccompletion} and note that any $h \in C^{\infty}(M)$ lifts to a function $C^{\infty}(\comp(M \backslash \Sigma))$. 
\begin{figure}[h!]
\centering
\includegraphics[scale=0.3]{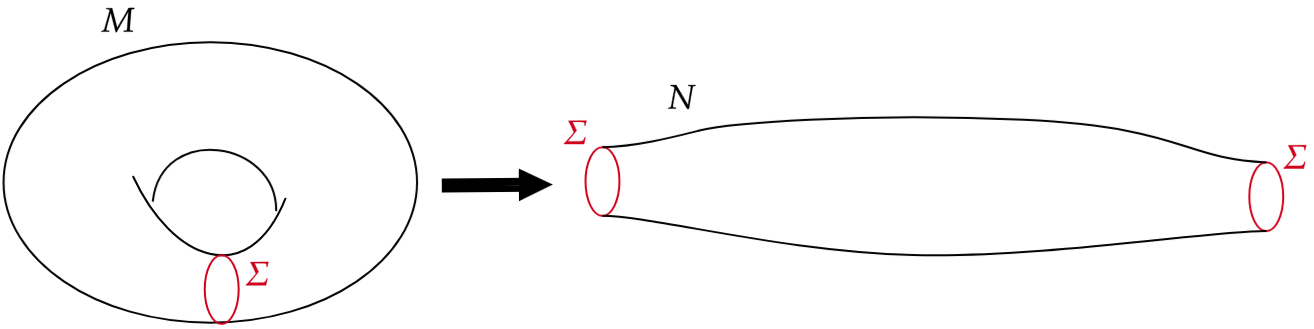}
\caption{Visualization of $N$, the metric completion of $M \backslash \Sigma$}
\label{fig:metriccompletion}
\end{figure}
\nl \indent  Given that $h: M \to \R$ lifts to $\comp(M \backslash \Sigma)$, we will impose the following conditions on $h$ when we work in the closed setting. By abuse of notation, let $\tilde{\Sigma} = \partial \comp(M \backslash \Sigma)$ and $\tilde{\Sigma}_1$ its largest connected component.
\begin{theorem} \label{mainTheoremClosed}
Suppose that $M^{n+1}$, $3 \leq n+1 \leq 7$, is a closed manifold with a closed, embedded, strictly stable minimal surface, $\Sigma$. For $h \in C^{\infty}(M)$, suppose its lift to $\comp(M \backslash \Sigma)$, $\tilde{h}$, satisfies assumptions \ref{hZeroBoundary} (respectively \ref{hCompactSupport}). Then there exist infinitely many distinct, almost-embedded, smooth PMCs with the same multiplicity, area, and index bounds as in Theorem \ref{ZeroBoundaryTheorem} (respectively,  \ref{mainTheorem}). Moreover, each such PMC is disjoint from $\Sigma$. 
\end{theorem}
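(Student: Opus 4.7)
The plan is to reduce Theorem \ref{mainTheoremClosed} to its manifold-with-boundary predecessors by passing to $N := \comp(M \backslash \Sigma)$. By hypothesis, $N$ is a (possibly disconnected) Riemannian manifold whose boundary $\partial N = \tilde{\Sigma}$ is stable minimal and admits a contracting neighborhood --- directly in the degenerate hypothesis, or as a consequence of strict stability in the other case. The function $h$ lifts canonically to $\tilde h \in C^{\infty}(N)$ and satisfies Assumption \ref{hCompactSupport} (respectively \ref{hZeroBoundary}) on $N$ by hypothesis. When $N = M^+ \sqcup M^-$ splits in the two-sided separating case, one applies the construction on whichever connected component has $\int \tilde h > 0$ (reorienting if necessary so $h$ meets the hypothesis on that side); otherwise $N$ is connected and we work with it directly. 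Applying Theorem \ref{mainTheorem} or Theorem \ref{ZeroBoundaryTheorem} to $(N,g,\tilde h)$ then produces infinitely many distinct almost-embedded multiplicity-one hypersurfaces $\tilde Y_{h,p} \subset N$ with prescribed mean curvature $\tilde h$, satisfying the area bound \eqref{AreaBounds} and $\mathrm{Index}(\mathcal{R}(\tilde Y_{h,p})) \leq p+1$.

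The key technical point is to verify that each $\tilde Y_{h,p}$ is disjoint from $\partial N$, so that it descends to a PMC in $M$ that avoids $\Sigma$. This is a maximum-principle/barrier argument using the contracting foliation $\tilde\Sigma \times [0,\hat t] \subset N$: each slice $\tilde\Sigma \times \{t\}$ has mean curvature $H_t$ pointing toward $\tilde\Sigma$ and satisfies $|\tilde h| < H_t$ by Assumption \ref{LessThanSlice}, so the slices form a strictly mean-convex foliation. The strong maximum principle for the PMC equation then forbids any tangential touch between $\tilde Y_{h,p}$ and a slice, and hence any accumulation at $\partial N = \tilde\Sigma \times \{0\}$ (where $H_{\partial N}=0$ while $|\tilde h|$ is controlled from above by $H_t \to 0$). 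In practice this foliation is already invoked in the proofs of the earlier theorems to confine the $\tilde Y_{h,p}$ to a region $\{t > \delta\}$ for some $\delta > 0$, so disjointness from $\partial N$ is essentially built into the construction rather than requiring a separate argument.

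Finally, the projection $\pi: N\setminus\partial N \to M\setminus\Sigma$ is an isometric diffeomorphism, so $Y_{h,p} := \pi(\tilde Y_{h,p})$ is an almost-embedded PMC hypersurface in $M$ disjoint from $\Sigma$, with the same area, index, and multiplicity as $\tilde Y_{h,p}$; distinctness and infinitude transfer directly, giving the claimed bounds. The main obstacle will be verifying the disjointness step in the genuinely degenerate case where $\Sigma$ is only stable rather than strictly stable, but the hypothesized contracting neighborhood is designed precisely to furnish the mean-convex foliation required to apply the maximum principle --- a mild strengthening of \cite[Lemma 12]{song2018existence} in spirit --- which is why it appears as a standing assumption.
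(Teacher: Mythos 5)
Your proof follows essentially the same route as the paper's: pass to $N = \comp(M\backslash\Sigma)$, verify the contracting-neighborhood hypothesis (automatic under strict stability, assumed otherwise), invoke Theorem \ref{mainTheorem} or \ref{ZeroBoundaryTheorem} on $N$ with the lifted $\tilde h$, and project the resulting PMCs back into $M$ via the isometric identification $N\setminus\partial N \cong M\setminus\Sigma$. One small remark: you treat the disjointness of $\tilde Y_{h,p}$ from $\partial N$ as a "key technical point" needing a fresh barrier argument, but — as you yourself note at the end of that paragraph — this is already part of the conclusion of the earlier theorems (the tethering argument of Proposition \ref{tetherProp} and the maximum-principle step in Proposition \ref{PMCConstruction} produce closed PMCs with $Y_h\cap\Sigma=\emptyset$), so no additional work is required here. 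Your aside about reorienting $h$ when $N$ disconnects is also extraneous given that the theorem hypothesis already stipulates $\tilde h$ satisfies the required assumptions on $\comp(M\backslash\Sigma)$; the paper instead observes the complementary fact that when $\Sigma$ separates, having a contracting neighborhood on just one side of $\Sigma$ suffices.
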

\noindent We note that the existence of a strictly stable embedded minimal surface is guaranteed when $H_n(M, \Z_2) \neq 0$ and $g$ is bumpy, or when $g$ is bumpy and does not satisfy the Frankel property. Thus, we have the following corollaries:
\begin{corollary} \label{mainCorollaryHomology}
Suppose that $M^{n+1}$, $3 \leq n+1 \leq 7$, is a closed manifold with $H_n(M, \Z_2) \neq 0$ and $g$ bumpy. Then there exists a strictly stable, embedded, minimal surface, $\Sigma$. Moreover, there exist infinitely many distinct, almost embedded PMCs, with the same assumptions on $h \in C^{\infty}(M)$ and conclusions as in theorems \ref{mainTheoremClosed}. 
\end{corollary}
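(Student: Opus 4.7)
The plan is to deduce Corollary \ref{mainCorollaryHomology} directly from Theorem \ref{mainTheoremClosed}; the only content beyond that theorem is producing a stable, embedded, closed minimal hypersurface $\Sigma$ out of the homological hypothesis $H_n(M,\Z_2)\neq 0$, which is classical geometric measure theory.

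First I would pick a nontrivial class $\alpha\in H_n(M,\Z_2)$. By the Federer--Fleming compactness theorem applied to integral $n$-cycles modulo $2$, one can minimize mass among $T\in \mathcal{Z}_n(M;\Z_2)$ representing $\alpha$, and the infimum is attained by some $T_0$. Since $3\leq n+1\leq 7$, the codimension-one interior regularity theory (Federer's dimension reduction combined with Simons' ruling out of stable singular cones in these dimensions, together with the regularity theory for mod-$2$ minimizers) implies that $T_0$ is supported on a smooth, closed, embedded minimal hypersurface $\Sigma\subset M$, possibly non-orientable and possibly disconnected. Because $T_0$ is mass-minimizing, $\Sigma$ is stable: its second variation of area is nonnegative, where in the one-sided case one interprets stability on the orientation double cover, which is exactly what $\partial\comp(M\setminus \Sigma)$ yields.

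With $\Sigma$ in hand, the remaining hypothesis of the corollary states that either $\Sigma$ is strictly stable, or $\Sigma$ is degenerate and $\partial \comp(M\setminus \Sigma)$ admits a contracting neighborhood. In either case, Theorem \ref{mainTheoremClosed} applies verbatim to $(M,g)$ and $\Sigma$. For any $h\in C^\infty(M)$ whose lift $\tilde h$ to $\comp(M\setminus \Sigma)$ satisfies Assumption \ref{hCompactSupport} (respectively \ref{hZeroBoundary}), we then obtain the infinite family $\{Y_{h,p}\}_{p=1}^\infty$ of distinct, almost embedded, multiplicity one PMCs disjoint from $\Sigma$, with the area bounds \eqref{AreaBounds} and index bound $\mathrm{Index}(\mathcal{R}(Y_{h,p}))\leq p+1$ inherited from Theorem \ref{mainTheorem} (respectively \ref{ZeroBoundaryTheorem}).

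There is no substantive obstacle: the corollary is a packaged application of Theorem \ref{mainTheoremClosed}, whose only additional input is the standard existence of a smooth, embedded, area-minimizing (hence stable) hypersurface in a non-trivial mod-$2$ homology class under the dimensional restriction $n+1\leq 7$. The mildest care to exercise is in the one-sided case, where the double-cover convention built into the definition of $\comp(M\setminus \Sigma)$ aligns the notions of stability and of contracting neighborhood with those used in Theorem \ref{mainTheoremClosed}, so no reinterpretation of hypotheses is required.
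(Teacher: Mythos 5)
Your argument is correct and follows essentially the same route as the paper: obtain a stable, embedded, closed minimal hypersurface $\Sigma$ by minimizing mass in a nontrivial class of $H_n(M,\Z_2)$ (with regularity in $3\leq n+1\leq 7$), pass to $\comp(M\backslash\Sigma)$, verify the contracting-neighborhood hypothesis (automatic if $\Sigma$ is strictly stable), and then apply Theorem \ref{mainTheoremClosed}. The only cosmetic difference is that the paper invokes Theorems \ref{mainTheorem}/\ref{ZeroBoundaryTheorem} directly on $\comp(M\backslash\Sigma)$ rather than quoting Theorem \ref{mainTheoremClosed} as a black box, but this is the same chain of reasoning.
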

\noindent See figures \ref{fig:metriccompletion}, \ref{fig:hnpmc} for a visualization of the construction in this case.
\begin{figure}[h!]
\centering
\includegraphics[scale=0.4]{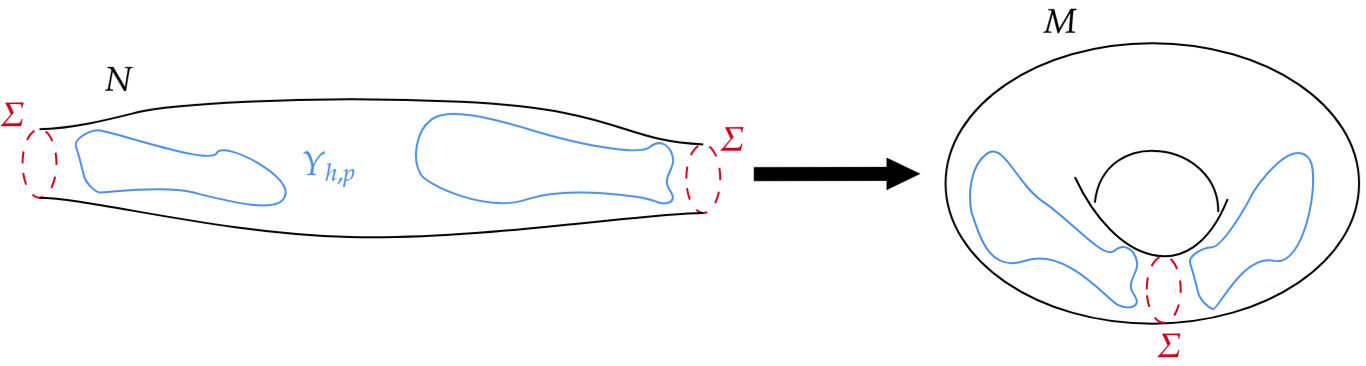}
\caption{Visualization of the constructed PMC, after gluing $\comp(M\backslash \Sigma)$ back to $M$.}
\label{fig:hnpmc}
\end{figure}
\nl Even if $H_n(M, \Z_2) = 0$, we may still be able to apply theorem \ref{mainTheorem} in the closed setting when $g$ does not satisfy the Frankel property:
\begin{corollary} \label{mainCorollaryFrankel}
Suppose $(M^{n+1}, g)$, $3 \leq n+1 \leq 7$ \textit{does not} satisfy the Frankel property and is bumpy. Then there exists a strictly stable, embedded, minimal surface, $\Sigma$, and infinitely many distinct, almost embedded PMCs, with the same assumptions on $h \in C^{\infty}(M)$ and conclusions as in theorem \ref{mainTheoremClosed}.
\end{corollary}
\noindent See figure \ref{fig:dumbbell} for a visualization of the non-Frankel case.
\begin{figure}[h!]
\centering
\includegraphics[scale=0.3]{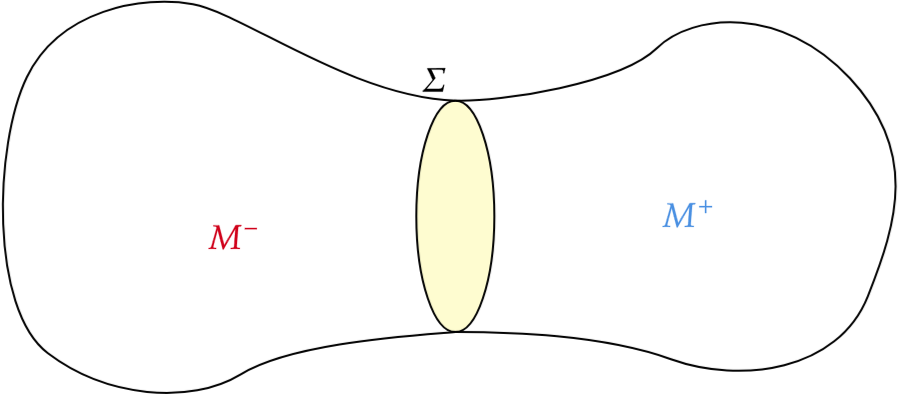}
\caption{Visualization of a dumbell metric which is not Frankel and for which corollary \ref{mainCorollaryFrankel} applies.}
\label{fig:dumbbell}
\end{figure}
\nl \indent To the authors' knowledge, this is the first construction of \textbf{infinitely many PMC surfaces} on any type of manifold or for any type of (non-zero) prescribing function. As a result, this can be viewed as partial progress towards the resolution of conjecture \ref{PMCconjecture}. 
\begin{remark} \label{contractingRemark}
As will be clear in the proof of theorems \ref{mainTheorem} \ref{ZeroBoundaryTheorem}, many of the results will hold if $\Sigma = \partial M$ is simply a minimal surface with a \textit{contracting neighborhood} (see \S \ref{construction}). Such a neighborhood exists automatically when $\Sigma$ is strictly stable (see e.g. \cite[Lemma 12]{song2018existence}), however, it is possible to obtain infinitely many PMCs when $g$ is not bumpy and $\Sigma$ admits a contracting neighborhood but is not strictly stable.  
\end{remark}

\subsection{Main Ideas}
As mentioned in \S \ref{Introduction}, Theorems \ref{ZeroBoundaryTheorem} and \ref{mainTheorem} are heavily inspired by Dey's \cite[Theorem 1.1]{DeyCMCs}. Adopting Dey's suspension construction of a $p+1$ sweepout to construct a c-CMC (see \S \ref{DeySuspension}), we construct infinitely many PMCs following Song's construction of infinitely many minimal surfaces on a manifold with a cylindrical end. In the case of a manifold with boundary, $\partial M = \Sigma = \sqcup_k \Sigma_k$, we proceed as follows: 
\begin{itemize}
\item Construct (following Song) a sequence of compact manifolds with boundary $(U_{\eps}, g_{\eps})$, such that $\omega_p(U_{\eps}, g_{\eps})$ approaches $\omega_p(\Cyl(M))$, where $\Cyl(M)$ denotes the Lipschitz manifold obtained by attaching a cylindrical end to $\partial M$. For any $p_0 \in \Z^+$, we can choose $\eps$ sufficiently small  so that
\[
\forall p \leq p_0, \qquad \omega_{p+1}(U_{\eps}, g_{\eps}) - \omega_p(U_{\eps}, g_{\eps}) \geq \frac{3}{4} A(\Sigma_1)
\]

\item Apply Dey's topological suspension construction and the free boundary PMC min-max of Sun--Wang--Zhou \cite{SunWangZhou} to conclude the existence of at least $p$ distinct (potentially free boundary) PMCs, $\{Y_{\eps,i}\}_{i = 1}^p$ for some $h_{\eps}$, a good prescribing function with $h_{\eps} \rightarrow h$ satisfying assumptions \ref{hCompactSupport}.

\item Via novel diameter estimates of Chambers and the second-named author \cite{chambers2024}, show that each $\{Y_{\eps,i}\}_{i = 1}^p$ does not intersect $\partial U_{\eps}$ and is in fact ``tethered to the core" of $M$, within $\Cyl(M)$. In particular, this shows that $Y_{\eps,i}$ are \textit{closed} and not free boundary PMCs. 

\item Take a limit as $\eps \to 0$ to conclude the existence of a varifold, $V_{p,h} = \lim_{\eps \to 0} Y_{\eps, p}$ with bounded first variation. Applying the regularity theory of Sun--Wang--Zhou \cite{SunWangZhou} (see also \cite{ZhouMultiplicity, ZhouZhu}) and White/Solomon-White's Maximum principle for stationary varifolds, we show that if $h$ has compact support, then $V_{h,p}$ is induced by an almost embedded PMC, and no connected component of it is equal to any component of $\partial M$.

\item We extend and improve the existence result for good functions $h \in C^{\infty}(M)$ with vanishing boundary conditions (see Assumption \ref{hZeroBoundary}) by taking an approximating sequence $h_k \in C^{\infty}_c(M)$ of prescribing functions, constructing the corresponding PMCs, $\{Y_{h_k,p}\}$, as limits of free-boundary min-max PMCs in $(U_\eps,g_\eps)$ as $\epsilon \downarrow 0$, and sending $k \to \infty$ through a diagonal argument. The maximum principle and a monotonicity (pinching) argument shows that no copies of boundary components can arise in the limit. 
%
%

\end{itemize}
In order for such a construction to work, Song requires a minimal surface with a ``contracting neighborhood", which in our context is verified if $\partial M$ is strictly stable. See \S \ref{construction} for details.\nl  
\indent We note that stable (perturbatively, strictly stable) minimal surfaces can occur via min-max, as described in work by Mazet--Rosenberg \cite{mazet2017minimal}. We also note that strictly stable manifolds arise naturally in hyperbolic geometry, where totally geodesic submanifolds are natural objects of study and always strictly stable (see e.g. \cite{filip2024finiteness}). The homology condition of corollary \ref{mainCorollaryHomology} is also relevant, as the resolution of the virtual Haken Conjecture by Agol \cite{agol2013virtual} demonstrates that every hyperbolic 3-manifold admits a finite cover with $H^2(\tilde{M}) \neq 0$. Applying a generic perturbation (which may not preserve $K = -1$) would yield a strictly stable minimal surface. Moreover, strictly stable minimal surfaces appear as the fiber of quasi-fuchsian manifolds \cite{uhlenbeck1983closed}, though these manifolds are non-compact (see also \cite{lowe2021deformations, guaraco2021mean}). Nonetheless, it would be interesting to extend our construction to this type of manifold.  \nl 
\indent We remark that our current construction \textit{does not construct c-CMC surfaces} (see condition \ref{compactPositiveIntegral}), though we hope to address this in the future. \nl 
\indent We now explain the motivation behind the assumptions on $h$ \ref{hCompactSupport} in the compact support setting:
\begin{itemize}
\item Condition \ref{LOneBound} is chosen so that $\omega_{p+1} - \omega_p > 2 \|h\|_{L^1}$, which allows one to apply a mountain pass construction of PMCs. It is motivated by the conditions of theorem \ref{DeyCMCThm} (see also \S \ref{YepsConstruction}, equation \eqref{SweepDiff}). 
\item Condition \ref{LessThanSlice} is needed to prevent free boundary PMC's in our min-max construction.
\item Condition \ref{compactPositiveIntegral} is needed to apply a maximum principle of White/Solomon--White \cite{white2009maximum, solomon1989strong} so as to prevent our p-sweepouts from producing copies of any of the $\{\Sigma_k\}_{k=1}^m$ (see \S \ref{MPsection}).

\item Condition \ref{goodnessAssumption} is needed to apply theorem \ref{thm:compactness with changing metrics} below and conclude that our min-max PMC is a smooth, multiplicity one, almost embedded surface. 
\end{itemize}
We emphasize the necessity of condition \ref{goodnessAssumption}, i.e. our compactly supported prescribing functions need to be ``good" on most of the manifold, as without any constraints, one could sweepout multiple copies of minimal surfaces in the complement of our contracting neighborhood around $\Sigma$. This is indicated in the compactness theorems of Zhou--Zhu (see e.g. \cite[Thm 3.19]{ZhouZhu}), and in general it is easy to imagine a prescribing function, $h$, along with a sequence of $h$-PMCs converging to a (nondegenerate!) minimal surface with multiplicity as follows: consider $\Sigma^n \subseteq M^{n+1}$ minimal, nondegenerate, and let $\Sigma_t$ denote the hypersurface a signed distance $t$ from $\Sigma$, along with $(s,t)$ fermi coordinates for a neighborhood of $\Sigma$. By choosing $h: M \to \R$ to be a prescribing function which is locally $h(s,t) = H_{\Sigma_t}(s)$, we can consider $\Sigma_{1/k} \cup \Sigma_{-1/k}$ to be a sequence of multiplicity one PMCs which converge to a non-degenerate minimal surface with multiplicity. Condition \ref{goodnessAssumption} prevents such multiplicity from occuring in $U^c$, and as described in section \S \ref{noMinimalSection}, the maximum principle prevents any minimal components contained in the contracting neighborhood. \nl
\indent We also explain assumptions \ref{hZeroBoundary} for the prescribing functions from theorem \ref{ZeroBoundaryTheorem}:
\begin{itemize}
\item Condition \ref{LOneBound} is identical to the previous setting and needed to apply the mountain pass construction of Dey
\item Condition \ref{vanishingBoundary} is needed as we will approximate our prescribing functions in theorem \ref{ZeroBoundaryTheorem} via compactly supported functions converging in \underline{$C^1$}. The need for $C^1$ is due to the $C^1$-regularity requirements for curvature estimates of PMCs (see remark \ref{C1 alpha convergence}), and hence compactness theorems of PMCs.

\item Condition \ref{goodnessAssumption2} is again imposed to apply theorem \ref{thm:compactness with changing metrics} and prevent multiplicity in our min-max hypersurfaces.
\end{itemize}
\subsection{Paper Organization}
The paper is organized as follows:
\begin{enumerate}
\item In \S \ref{background}, we establish essential background including Song's construction of a cylindrical end \S \ref{construction}, Almgren--Pitts' general min-max theory \S \ref{MinMaxBackground}, the PMC min-max theory of Zhou--Zhu \cite{ZhouZhu} and Sun--Wang--Zhou \cite{SunWangZhou} \S \ref{PMCMinMaxBackground}, and Dey's suspension construction \S \ref{DeySuspension}. We show the existence of a one parameter Almgren--Pitts sweepout with respect to \textit{relative cycles}, $\Znr(M, \partial M; \Z_2)$ (\S \ref{APWidthBoundarySection}), and compute bounds on the corresponding width for manifolds with cylindrical ends (\S \ref{APWidthBoundSection}). To the authors' knowledge, this had not been done before.

\item In \S \ref{YepsConstruction}, we will begin the proof of theorem \ref{mainTheorem} by applying Dey's suspension construction to produce $p$ PMC surfaces, $\{Y_{\eps, i}\}_{i = 1}^p$ on $(U_{\eps}, g_{\eps})$ for all $\eps$ sufficiently small. The surfaces obey $H = h_{\eps} \Big|_{Y_{\eps, p}}$, where in the limit $h_{\eps}$ vanishes on the cylindrical end and converges to $h \in C^{\infty}_c(M)$, our original prescribing function. We also show that the $\{Y_{\eps, i}\}$ are closed (and hence not free boundary) using diameter estimates from \cite{chambers2024} and a ``tethering" argument.

\item In \S \ref{EpsToZero}, \ref{conclusionSection}, we complete the proof of theorem \ref{mainTheorem} by showing that the surfaces $\{Y_{\eps, i}\}$ converge to distinct PMCs on the original manifold $M$. The main goal is to prevent accumulation of $\{Y_{\eps,i}\}$ on $\Sigma$ and also verify that the PMCs are multiplicity one and density at most $2$. The argument requires an adaptation of a maximal principle for stationary varifolds from Solomon--White \cite{solomon1989strong}, as well as a ``no-pinching" argument to ensure that no part of our limiting object converges to a component of $\Sigma$.

\item In \S \ref{ExtensionSection}, we extend our argument to prescribing functions satisfying assumptions \ref{hZeroBoundary}, proving theorem \ref{ZeroBoundaryTheorem}. This follows by an approximation with compactly supported functions, $\{h_i\} \in C^{\infty}_c(M)$. Here, the regularity of the curvature estimates for surfaces with prescribed mean curvature requires us to have $h$ vanish at the boundary to order $1$.

\item In \S \ref{corollariesSection}, we prove theorem \ref{mainTheoremClosed} as a direct application of theorems \ref{mainTheorem}, \ref{ZeroBoundaryTheorem} on $\comp(M \backslash \Sigma)$. Since the resulting PMCs are always bounded away from $\partial \comp(M \backslash \Sigma)$, they isometrically (almost) embedin the original manifold, $M$. We also prove corollaries \ref{mainCorollaryHomology} \ref{mainCorollaryFrankel} by showing the existence of a stable embedded minimal surface, $\Sigma$, and then applying theorems \ref{mainTheorem} \ref{ZeroBoundaryTheorem} on $\comp(M \backslash \Sigma)$. 
\end{enumerate}
\subsection{Acknowledgements}
The authors are grateful to Otis Chodosh, Christos Mantoulidis, and Antoine Song for inspiring conversations on the work. The authors also thank Xin Zhou, Jonathan Zhu, and Costante Bellettini for answering questions during the process of this work. The second author is grateful to David Fisher and Ben Lowe for their expertise in hyperbolic geometry and the existence of strictly stable minimal surfaces in these spaces. The first author was supported by ANID Fondecyt Iniciaci\'on grant number 11230874. This work began while at the ``Geometric Flows and Relativity" workshop in Punta Del Este, Uruguay of March 2024, and the authors are thankful to the organizers for coordinating the event.
%
%
\section{Background} \label{background}
%

\subsection{Min-max theory} \label{MinMaxBackground}

In this section, we recall some definitions and results from Almgren-Pitts min-max theory, as well as the existence results for free-boundary hypersurfaces of prescribed mean curvature (PMC) in a compact Riemannian manifold $(M^{n+1},g)$ with nonempty boundary $\partial M$. We follow \cite{DeyCMCs,DeyWidths, LiokumovichMarquesNeves,ZhouMultiplicity,SunWangZhou}.

For $d=0,1,\ldots,n+1$. we denote by $\bI_{d}(M;\Z_2)$ the space of $d$-dimensional mod 2 flat chains in $M$. The space $\bI_{n+1}(M,\Z_2)$ can be naturally identified with the space $\mathcal{C}(M)$ of Caccioppoli sets $\Omega\subset M$, by identifying every such set with the current $[\![\Omega]\!]$. Under this identification, the boundary of $\partial \Omega \in \bI_n(M;\Z_2)$ is naturally identified with the current associated with the reduced boundary $\partial^*\Omega$ of $\Omega$. We also consider the spaces of \emph{mod $2$-cycles} 
    \[\ZZ_d(M;\Z_2)=\{T \in \bI_d(M;\Z_2)\colon \partial T=0\}\]
endowed with the \emph{flat metric}
    \[
    \mathcal{F}(T_1,T_2) = \inf\{\mathbf{M}(Q)+\mathbf{M}(R) \colon T_1-T_2 = Q+\partial R\},
    \]
where we denote by $\mathbf{M}(T)$ the \emph{mass} of a cycle. We remark that for $\Omega \in \bI_{n+1}(M;\Z_2)$, the mass $\mathbf{M}(\partial \Omega)$ is the perimeter $P_{(M,g)}(\Omega)=\mathcal{H}^{n}(\partial^* \Omega)$ of $\Omega$, in the sense of Caccioppoli sets.

For $T \in \bI_d(M;\Z_2)$ we will denote by $|T|$ the (multiplicity one) integral $d$-varifold associated with $T$, and by $\|T\|$ the associated Radon measure in $M$. We also consider the $\mathbf{F}$-\emph{metric} in the spaces $\bI_d(M,\Z_2)$ defined by
    \begin{align*}
        \mathbf{F}(\Omega_1,\Omega_2) &= \mathcal{F}(\Omega_1,\Omega_2) + \mathbf{F}(|\partial \Omega_1|,|\partial \Omega_2|), \qquad  \Omega_1,\Omega_2 \in \mathcal{C}(M)\\
        \mathbf{F}(T_1,T_2) &= \mathcal{F}(T_1,T_2) + \mathbf{F}(|T_1|,|T_2|), \qquad T_1,T_2 \in \bI_d(M,\Z_2),
    \end{align*}
where the $\mathbf{F}$-metric for $n$-varifolds is defined by Pitts in \cite[p. 66]{Pitts}. When equipped with these metrics, the spaces of $n$ and $(n+1)$-flat cycles will be indicated by $\bI_n(M,\mathbf{F};\Z_2)=\ZZ_n(M;\mathbf{F};\Z_2)$ and $\bI_{n+1}(M,\mathbf{F};\Z_2)=(\mathcal{C}(M),\mathbf{F})$. We remark that (see \cite[p. 21]{MN2014Willmore})
\[
\forall S, T \in \bI_k(M), \qquad \mathcal{F}(S - T) \leq \bF(S, T) \leq 2 \bM(S - T).
\]
\noindent We will also consider the spaces of \emph{relative $n$-cycles} in $M$ (see also \cite[Definition 1.20]{Almgren}). Let
    \[\ZZ_n(M,\partial M;\Z_2)=\{T \in \bI_n(M;\Z_2)\colon \mathrm{support}(\partial T)\subset \partial M\}\]
and define $T,S \in \ZZ_n(M,\partial M;\Z_2)$ to be equivalent if $T-S \in \bI_n(\partial M;\Z_2)$. We let $\Znr(M,\partial M;\Z_2)$ be the quotient space, and denote the equivalence class of a $T \in \ZZ_{n}(M,\partial M;\Z_2)$ by $P(T)$, or simply by $T$ when clear from context. The space of relative cycles inherit a \emph{flat} (semi-)norm and a \emph{mass norm} given by
    \[
    \mathcal{F}(P(T)) = \inf_{S \in P(T)} \mathcal{F}(S) = \inf\{\mathcal{F}(T+R) \colon R \in \bI_n(\partial M;\Z_2)\}
    \]
and
    \[
    \mathbf{M}(P(T))= \inf_{S \in P(T)}\mathbf{M}(S) = \inf\{\mathbf{M}(T+R) \colon R \in \bI_n(\partial M;\Z_2)\},
    \]
respectively. The spaces $\ZZ_n(M,\partial M;\Z_2)$ and $\Znr(M,\partial M;\Z_2)$ will be endowed with the topology induced by $\mathcal{F}$. We also note that, by \cite{LiZhou} and \cite[\S 3]{guang2021min}, each class $\tau \in \Znr(M,\partial M;\Z_2)$ has a unique \emph{canonical representative} $T \in \tau \subset \ZZ_n(M,\partial M;\Z_2)$ such that $T\lfloor \partial M = 0$, and it satisfies $\mathbf{M}(T)=\mathbf{M}(\tau)$.

In the case $\partial M = \emptyset$, Marques and Neves \cite{MarquesNevesMultiplicity} provided a short proof of the fact that $\mathcal{C}(M)$ is contractible (with respect to the flat metric), and the boundary map $\partial\colon \mathcal{C}(M) \to \ZZ_n(M;\Z_2)$ is a double cover. The analogous result when $\partial M \neq 0$ is used in \cite[Thm 4.7, Step II]{SunWangZhou}, but we include the proof from \cite{MarquesNevesMultiplicity} for completeness, adapting it to the case of relative cycles (see also \cite[Section 3.2]{LiZhou}). 
%
\begin{lemma} \label{topology_znr}
The boundary operator
    \[
    P\circ \partial \colon (\mathcal{C}(M),\mathcal{F}) \to \Znr(M,\partial M;\Z_2)
    \]
is a 2-sheeted covering map with contractible domain. Moreover, $\Znr(M,\partial M;\Z_2)$ is weakly homotopically equivalent to $\R \mathbb{P}^\infty$.
\end{lemma}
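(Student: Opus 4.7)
The plan is to adapt the argument of Marques--Neves \cite{MarquesNevesMultiplicity} for closed manifolds to the relative cycle setting, following the approach of \cite[Thm 4.7]{SunWangZhou} and \cite[\S 3.2]{LiZhou}. The proof splits naturally into three steps: (i) contractibility of $(\CC(M),\mathcal{F})$, (ii) verifying that $P\circ \partial$ is a $2$-to-$1$ local homeomorphism, and (iii) deducing the weak homotopy type from the resulting fibration.

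For step (i), I would exhibit an explicit deformation retract. Since on Caccioppoli sets the flat distance coincides with the Lebesgue measure of the symmetric difference, $\mathcal{F}(\Omega_1,\Omega_2)=|\Omega_1\triangle \Omega_2|$, one can fix any smooth function $f\colon M \to [0,1]$ attaining both endpoints and define $H\colon \CC(M)\times [0,1] \to \CC(M)$ by $H(\Omega,s)=\Omega \cap \{f < 1+\delta - (1+2\delta)s\}$ for a small $\delta>0$, so that $H(\cdot,0)=\mathrm{Id}$ and $H(\cdot,1)\equiv \emptyset$. Joint continuity in $(\Omega,s)$ follows from dominated convergence applied to characteristic functions, combined with the inclusion $H(\Omega_1,s)\triangle H(\Omega_2,s)\subset \Omega_1\triangle \Omega_2$.

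For step (ii), fiber size is handled algebraically: the identity $\partial \Omega + \partial \Omega^c = [\![\partial M]\!]$ (mod $2$) gives $P(\partial \Omega)=P(\partial \Omega^c)$, while if $P(\partial \Omega_1)=P(\partial \Omega_2)$ then $\partial(\Omega_1-\Omega_2)$ is supported on $\partial M$, and the constancy theorem applied to the open manifold $M\setminus \partial M$ forces $\Omega_1-\Omega_2 \in \{0,[\![M]\!]\}$. Local triviality is the real content: given $\Omega_0$ and $\tau$ with $\mathcal{F}(\tau, P(\partial \Omega_0))<\eps$, I would use the definition of the flat semi-norm on $\Znr$ to extract a representative $T \in \tau$ with $\mathcal{F}(T-\partial \Omega_0)<\eps$, and then an isoperimetric/filling argument in $M$ to produce $U \in \bI_{n+1}(M;\Z_2)$ with $\partial U = T - \partial \Omega_0 + R$, $R \subset \partial M$, and $\bM(U)$ controlled by $\eps$. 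The set $\Omega := \Omega_0 + U \pmod 2$ is then a lift of $\tau$ close to $\Omega_0$, and its uniqueness among nearby lifts follows from the two-sheeted structure above once one notes that $\mathcal{F}(\Omega,\Omega^c)=\mathrm{Vol}(M)$ is a uniform lower bound on the separation between the two sheets.

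Step (iii) is then formal: the long exact sequence of the covering $\Z_2 \to \CC(M) \to \Znr(M,\partial M;\Z_2)$ combined with contractibility of the total space gives $\pi_1(\Znr)=\Z_2$ and $\pi_k(\Znr)=0$ for $k\geq 2$, which characterizes $K(\Z_2,1)=\R\mathbb{P}^\infty$ up to weak equivalence. The main obstacle I anticipate is the filling step in (ii): one must ensure the local perturbation $U$ can always be chosen with mass comparable to $\eps$ uniformly in $\Omega_0$, and that the boundary correction $R\subset \partial M$ does not inflate the estimate. This requires a careful application of the isoperimetric inequality and slicing near $\partial M$, but is otherwise parallel to the closed-manifold argument.
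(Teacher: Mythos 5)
Your proposal is correct and follows essentially the same route as the paper: cite/reproduce the Marques--Neves deformation retraction for contractibility of $(\mathcal{C}(M),\mathcal{F})$, identify the two-point fibers via the Constancy Theorem on $M\setminus \partial M$, and establish the covering structure through an Almgren-style isoperimetric filling carried out in two stages (first in $\partial M$ to absorb the boundary correction $R$, then in $M$), which is precisely the technical Claim in the paper's proof. The only cosmetic difference is that you phrase the filling step as direct local triviality of the double cover, whereas the paper packages it as unique path/homotopy lifting for maps from cubes, but the underlying estimates and the concern you flag about controlling $R\subset \partial M$ are identical.
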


\begin{proof}
The contractibility of $(\mathcal{C}(M),\mathcal{F})\simeq \bI_{n+1}(M;\Z_2)$ is demonstrated as in \cite[Claim 5.3]{MarquesNevesMultiplicity}. We can choose a Morse function $f\colon M \to [0,1]$ so that each level set $f^{-1}(t)$ intersects $\partial M$ transversely, by choosing a collar neighborhood of $\partial M$ and using the parametric transversality Theorem to ensure this boundary condition.
This implies that the continuous map $\Phi\colon \R\mathbb{P}^\infty \to \ZZ_n(M,\partial M;\Z_2)$ defined by
    \[\Phi([a_0:a_1:\ldots:a_k:0\ldots]) = \partial\{x\in M \colon a_0+a_1f(x) + \ldots + a_kf(x)^k \leq 0\}\]
has the following property: for each $a \in \R\mathbb{P}^\infty$, the $n$-integral current $\Phi(a)$ is the canonical representative in its equivalence class in $\Znr(M,\partial M;\Z_2)$.

Given a continuous map $\Psi \colon I^p \to \Znr(M,\partial M;\Z_2)$ and $U_0 \in \mathcal{C}(M)$ such that $P(\partial U_0)=\Psi(0)$, we can construct a unique continuous lift $U \colon I^p \to \mathcal{C}(M)$ with $U(0)=U_0$ as in \cite[Claim 5.2]{MarquesNevesMultiplicity}. First, note that, by using the Constancy Theorem for mod 2 flat chains in both $\partial M$ and $M$, we see that if $P(\partial \Omega) = P(\partial \Omega')$ for $\Omega,\Omega'\in \mathcal{C}(M)$, then $\Omega - \Omega' \in \{0,M\}$. This observation implies that such lift (if exists) is unique.\medskip

In order to show the existence of a lift, we observe:\medskip

\noindent \textbf{Claim.} There exists $\epsilon_0>0$ and $\nu_0>0$ such that for any $\tau \in \Znr(M,\partial M;\mathbb{Z}_2)$ with $\mathcal{F}(\tau)<\epsilon_0$ there exists a unique $\Omega \in \mathcal{C}(M)$ such that $P(\partial \Omega)=\tau$ and $\mathbf{M}(\Omega) \leq \nu_0 \mathcal{F}(\tau)$. \smallskip

By the results of \cite{Almgren} adapted to the context of mod 2 cycles, for $X \in \{M,\partial M\}$, there are positive constants $\epsilon_X>0$ and $\nu_X>0$ such that for any $B_0 \in \mathcal{Z}_{\dim(X) - 1}(X;\mathbb{Z}_2)$ with $\mathcal{F}_X(B_0) < \epsilon_X$, there exists $W_0 \in \mathbf{I}_{\dim(X)}(X;\mathbb{Z}_2)$ with $\partial W_0 = B_0$ and $\mathbf{M}(W_0) \leq \nu_X \mathcal{F}_X(B_0)$.  We claim that $\epsilon_0 = \min\{\frac{\epsilon_M}{(1+\nu_{\partial M})}, \epsilon_{\partial M}\}$ and $\nu_0 =\nu_M(1 + \nu_{\partial M})$ have the desired property.

To see this, let $T_0 \in \mathcal{Z}_n(M,\partial M;\mathbb{Z}_2)$ be the canonical representative of $\tau$, so that 
    \[
    \mathcal{F}(T_0) = \mathcal{F}_M(T_0)=\mathcal{F}(\tau)<\epsilon_0< \epsilon_M.\]
We also have
    \[\mathcal{F}_{\partial M}(\partial T_0) \leq \mathcal{F}_M(\partial T_0) \leq \mathcal{F}_M(T_0),\]
where in the last inequality we used that if $T_0 = Q + \partial R$, then $\partial T_0 = \partial Q = Q_0 + \partial R_0$ for $Q_0=0$ and $R_0 = Q$, hence $\mathcal{F}_M(\partial T_0)\leq \mathbf{M}(Q_0) + \mathbf{M}(R_0) = \mathbf{M}(Q)\leq \mathbf{M}(Q) + \mathbf{M}(R)$. Thus $\mathcal{F}_{\partial M}(\partial T_0) < \epsilon_0 \leq \epsilon_{\partial M}$ and there exists $S_0 \in \mathbf{I}_{n}(\partial M;\mathbb{Z}_2)$ such that 
    \[
    \partial S_0 = \partial T_0 \quad  \text{and} \quad \mathbf{M}(S_0) \leq \nu_{\partial M} \mathcal{F}_{\partial M}(\partial T_0) < \nu_{\partial M} \epsilon_0.
    \] 
Using $\mathcal{F}_M(S_0-T_0) \leq \mathbf{M}(S_0) + \mathcal{F}_M(T_0) <(1+\nu_{\partial M}) \epsilon_0\leq \epsilon_M$, we conclude that there exists $\Omega \in \mathbf{I}_{n+1}(M;\mathbb{Z}_2)=\mathcal{C}(M)$ such that 
    \[
    \partial \Omega = S_0 - T_0 \quad \text{and} \quad \mathbf{M}(\Omega) \leq \nu_M \mathcal{F}(T_0-S_0)\leq \nu_M(1+\nu_{\partial M})\mathcal{F}_M(T_0)=\nu_0\mathcal{F}(\tau).
    \]
The uniqueness of such $\Omega$ follows as above, by the Constancy Theorem. Since $P(\partial \Omega) = P(S_0-T_0) = P(T_0) = \tau$ this proves the claim.

Now the existence of the lift follows as in \cite{MarquesNevesMultiplicity}, first in the case $p=1$ by using the claim in each interval of a sufficiently fine partition of $I$, and then for $p>1$ by simply connectedness.
\end{proof}

As also computed by Almgren in \cite{Almgren}, the previous Lemma shows that $\pi_1(\Znr(M,\partial M;\Z_2))=\Z_2$ and $\pi_i(\Znr(M;\Z_2))=0$, for $i\geq 2$. As a consequence, its $\Z_2$-cohomology ring $H^*(\Znr(M,\partial M;\Z_2),\Z_2)$ is isomorphic to $\Z_2[\bar \lambda]$, for the unique nonzero $\bar\lambda \in H^1(\Znr(M,\partial M;\Z_2),\Z_2)$.

We will follow the presentation of \cite{DeyWidths} for the \emph{volume spectrum} of $(M^{n+1},g)$ and work with topologically nontrivial maps $X \to \ZZ_n(M,\partial M;\Z_2)$ defined on \emph{simplicial complexes} $X$, instead of the standard construction where $X$ is assumed to be a cubical complex, namely a subcomplex of $[0,1]^N$. To follow \cite{SunWangZhou}, we also introduce the notation for cubical complex below. However, we observe that the results from \cite[Chapter 4]{BP} imply that it would be equivalent to require the domains of $p$-sweepouts $X \in \mathcal{P}_p$ to be cubical complexes. See \cite[\S 2.3]{DeyWidths}.

We say that a $\mathcal{F}$-continuous map $\Phi\colon X \to \Znr(M,\partial M;\Z_2)$ defined on a finite simplicial complex is a $p$-\emph{sweepout} if $\Phi^*(\bar\lambda^p) \neq 0 \in H^p(X,\Z_2)$. We will denote by $\mathcal{P}_p$ the space of all $p$-sweepouts which satisfy the \emph{no concentration of mass} condition:
    \[
    \lim_{r \to 0^+} \sup\{\|\Phi(x)\|(B_r(q)) \colon x \in X, q \in M\} = 0.
    \]
As noted in \cite[Lemma 2.7]{LiokumovichMarquesNeves}, any map into $\Znr(M,\partial M;\Z_2)$ which is continuous with respect to the mass norm has no concentration of mass.

The $p$\emph{-width} of $(M^{n+1},g)$ is the min-max number for the area (mass of $n$-cycles) associated to $\mathcal{P}_p$, that is
    \[
    \omega_p(M,g) = {\adjustlimits \inf_{\Phi \in \mathcal{P}_p} \sup_{x \in \mathrm{dom}(\Phi)}} \mathbf{M}(\Phi(x)).
    \]
The sequence $\{\omega_p(M,g)\}_p$ is called the \emph{volume spectrum} of $(M,g)$. As described in the introduction, the volume spectrum was first described in the work of Gromov \cite{Gromov} as a nonlinear analogue of the spectrum of the Laplacian, and its asymptotic growth played a crucial role in recent developments of the variational theory for minimal as well as constant and prescribed mean curvature surface. Closely related to the $p$-widths is $W_0 = W_0(M, g)$, the \emph{one parameter Almgren--Pitts width} \cite{Almgren,Pitts}, whose definition is recalled in \S \ref{APWidthBoundarySection}. 
\subsection{Applications of Min-Max to PMCs} \label{PMCMinMaxBackground}
We outline the essentials of the min-max construction for free boundary PMC's (labelled FBPMC's) as in Sun--Wang--Zhou, \cite[\S 3]{SunWangZhou}. All the setups here are the same as those in Zhou \cite[\S 1.1]{ZhouMultiplicity}. Let $(N^{n+1}, g)$ be a compact, oriented smooth manifold with boundary, $\partial N$, and $3 \leq n+1 \leq 7$.
%
%
\begin{definition}[\cite{SunWangZhou}, Defn 3.4] \label{HomotopyClassDef}
Let $X^k$ be a cubical complex of dimension $k\in \mathbb{N}$ in some $I(m,j)$ and $Z\subset X$ be a cubical subcomplex. 

Let $\Phi_0:X\rightarrow (\CC(N),\F)$ be a continuous map. Let $\Pi$ denote the collection of all sequences of continuous maps $\{\Phi_i:X\rightarrow\CC(N)\}_{i\in\mb N}$ such that
\begin{enumerate}
\item each $\Phi_i$ is homotopic to $\Phi_0$ in the $\mathbf{F}$-topology on $\CC(N)$;
\item there exist homotopy maps $\{\Psi_i:[0,1]\times X\rightarrow\CC(N)\}_{i\in\mb N}$ which are continuous in the $\mathbf{F}$ topology, $\Psi_i(0,\cdot)=\Phi_i$, $\Psi_i(1,\cdot)=\Phi_0$, and satisfy
\[\limsup_{i\rightarrow\infty}\sup\{\F(\Psi_i(t,x),\Phi_0(x)):t\in[0,1],x\in Z\}=0.\]
%
\end{enumerate}
\begin{remark}
The original homotopy maps $\Psi_i$ from \cite{SunWangZhou} are only defined to be continuous in the flat topology. However, as noted in \cite[Definition 3.4]{DeyCMCs}, the homotopies can actually be taken to be continuous in the $\mathbf{F}$-metric via \cite[Prop 1.14,1.15]{ZhouMultiplicity} and \cite[\S 3]{MarquesNevesMultiplicity}. 
\end{remark}
Given a pair $(X,Z)$ and $\Phi_0$ as above, $\{\Phi_i\}_{i\in \mb N}$ is called a {\em $(X,Z)$-homotopy sequence of mappings into $\CC(N)$}, and $\Pi$ is called the {\em $(X,Z)$-homotopy class of $\Phi_0$}. 
\end{definition}
\begin{definition}
We define the $h$-width by
\[\mf L^h=\mf L^h(\Pi):=\inf_{\{\Phi_i\}\in \Pi}\limsup_{i\rightarrow\infty}\sup_{x\in X}\{\AA^h(\Phi_i(x))\}.\]
\end{definition}
\noindent A sequence $\{\Phi_i\}_{i\in\mb N}\in \Pi$ is called a {\em minimizing sequence} if $\mf L^h(\{\Phi_i\})=\mf L^h(\Pi)$, where 
\[\mf L^h(\{\Phi_i\}):=\limsup_{i\rightarrow\infty}\sup_{x\in X}\{\AA^h(\Phi_i(x))\}.\]
\noindent Given $\Phi_0$ and $\Pi$, by the same argument as \cite[Lemma 1.5]{ZhouMultiplicity}, there exists a minimizing sequence.
\begin{definition}\label{def:critical set}
If $\{\Phi_i\}_{i\in\mb N}$ is a minimizing sequence in $\Pi$, the {\em critical set} of $\{\Phi_i\}$ is defined by 
\[\mf C(\{\Phi_i\})=\{V=\lim_{j\rightarrow\infty}|\partial \Phi_{i_j}(x_j)| \text{\ as varifolds : with\ }\lim_{j\rightarrow\infty}\AA^h(\Phi_{i_j}(x_j))=\mf L^h(\Pi)\}.\]
\end{definition}
%
\noindent We also recall Sun--Wang--Zhou's notion of  good prescribing functions on a manifold $(N^{n+1}, \partial N, g)$. Let $\mc S =\mc S(g)$ be the collection of all Morse functions $h$ such that the zero set $\Sigma_0 = \{h = 0\}$ is either empty, or is a compact, smoothly embedded hypersurface so that 
\begin{itemize}
\item $\Sigma_0$ is transverse to $\partial M$ and the mean curvature of $\Sigma_0$ vanishes to at most finite order;
\item $\{x\in\partial M:H_{\partial M}(x)=h(x)\text{ or } H_{\partial M}(x)=-h(x)\}$ is contained in an $(n-1)$-dimensional submanifold of $\partial M$. 
\end{itemize}
Note that $\mathcal{S}(g)$ differs from the larger class of functions, $\mathcal{S}^*(g)$, as defined in \S \ref{MainResultsSection}. 
\begin{lemma}[Lemma 2.3, \cite{SunWangZhou}]\label{lem:Sg contains open dense}
    $\mc S(g)$ contains an open and dense subset in $C^\infty(M)$.
\end{lemma}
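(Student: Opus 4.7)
The plan is to verify each of the three defining conditions of $\mathcal{S}(g)$ separately and show each holds on an open and dense subset of $C^\infty(M)$; the intersection of finitely many such subsets will remain dense (by Baire) and contain an open dense set via refinements. Openness throughout will follow from $C^\infty$-continuity of the relevant geometric quantities, so the main work is density, which I would establish by transversality/Sard-type arguments.

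First, the set of Morse functions is open and dense in $C^\infty(M)$ by a classical transversality argument in the $1$-jet bundle. Among Morse functions $h$, I would next ask that $0$ be a regular value of $h$ and that the resulting hypersurface $\Sigma_0=\{h=0\}$ meet $\partial M$ transversely (or be disjoint from it). Applying Sard's theorem to the evaluation map, together with parametric transversality on a collar of $\partial M$, shows that by an arbitrarily small perturbation $h\rightsquigarrow h+c$ (with $c$ small and regular for both $h$ and $h|_{\partial M}$) one may simultaneously make $0$ regular for $h$ and for $h|_{\partial M}$. This gives a compact embedded $\Sigma_0$ transverse to $\partial M$, and the transversality is stable under small $C^2$-perturbations, hence open.

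For the ``mean curvature vanishes to at most finite order'' condition on $\Sigma_0$, the idea is to exclude the non-generic event in which $H_{\Sigma_0}$ and all of its derivatives vanish simultaneously at some point of $\Sigma_0$. I would consider a smooth family of perturbations $h_s = h + s\phi$ (for $\phi$ ranging over a dense subspace of $C^\infty(M)$ and $s$ small) and use the implicit function theorem to write the perturbed zero set $\Sigma_0^s$ as a normal graph over $\Sigma_0$. The mean curvature $H_{\Sigma_0^s}$ then depends on $s$ through a second-order linear operator (the Jacobi operator plus lower order terms), with leading contribution $-J_{\Sigma_0}u + \textrm{(first-order terms in }\phi\textrm{)}$, where $u$ is the normal graph function determined by $\phi$. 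Surjectivity of this linear dependence (as $\phi$ varies) onto the space of smooth functions on $\Sigma_0$ modulo finitely many jets lets one apply a jet-bundle transversality theorem: the set of $h$ whose induced $H_{\Sigma_0}$ has an infinite-order zero is of ``infinite codimension'' and can be avoided by a generic perturbation. Density follows; openness follows because a finite-order vanishing set of a continuous quantity is stable under small $C^k$-perturbations for $k$ large enough, on a compact hypersurface.

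Finally, for the boundary condition on $\{x\in\partial M : H_{\partial M}(x)=\pm h(x)\}$, I would apply Sard's theorem directly to the two smooth functions $f_\pm := H_{\partial M}\mp h|_{\partial M}$ on $\partial M$. For a generic small additive perturbation $h\leadsto h+c$, both $f_+$ and $f_-$ admit $0$ as a regular value, so their zero sets are smooth $(n-1)$-dimensional submanifolds of $\partial M$; their union then contains the required locus. This condition is again open in $C^\infty$ by continuity of $f_\pm$ and their derivatives. Taking the intersection of the (open and dense) subsets obtained in each step yields an open and dense subset contained in $\mathcal{S}(g)$. The hardest step will be the ``finite order vanishing'' of the mean curvature, since it requires upgrading a pointwise transversality computation into an open condition across all of $\Sigma_0$, and one must be careful to use jet transversality in the appropriate bundle rather than naive Sard.
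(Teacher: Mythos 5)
First, note that the paper under review does not prove this lemma; it is quoted verbatim from Sun--Wang--Zhou and invoked as an external ingredient, so there is no internal proof to compare against line by line. Your plan must therefore be measured against the argument given in the cited source (which itself builds on Zhou--Zhu's density result for good prescribing functions on closed manifolds).

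Your general strategy is reasonable, and two of your three steps are essentially complete. The transversality arguments for making $0$ a simultaneous regular value of $h$ and of $h|_{\partial M}$ by an additive shift $h \rightsquigarrow h + c$, and for making $0$ a regular value of $f_\pm = H_{\partial M} \mp h|_{\partial M}$, are standard Sard arguments and are correct; the openness of these conditions in $C^\infty$ is also clear. Your openness argument for the finite-order vanishing condition is also correct, provided you make explicit the compactness step: on compact $\Sigma_0$, finite-order vanishing at every point gives a \emph{uniform} bound $K$ on the vanishing order (the $K$-jet of $H_{\Sigma_0}$ avoids the zero section, a closed set), and this uniform nondegeneracy is preserved under small $C^{K+2}$-perturbations of $h$, since both $\Sigma_0$ and $H_{\Sigma_0}$ depend continuously on $h$ in those norms.

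The weak point is precisely the step you flag as hardest: density of the finite-order vanishing condition via jet transversality. You assert ``surjectivity of this linear dependence (as $\phi$ varies) onto the space of smooth functions on $\Sigma_0$ modulo finitely many jets'' but do not verify it, and this is exactly where the mathematical content lives. One must compute the linearization $\phi \mapsto DH_{\Sigma_0}(h)[\phi]$ of the assignment $h \mapsto H_{\{h=0\}}$ (a genuinely nonlinear, second-order, nonlocal operation because $\Sigma_0$ moves with $h$), and show it hits arbitrary $K$-jets at an arbitrary point of $\Sigma_0$; only then does a Thom-type jet transversality theorem apply, and one must take care that the ambient $J^K$-bundle is over the varying manifold $\Sigma_0(h)$. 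As written this is a gap rather than an error, but it is the crux and cannot be waved through. The cited argument takes a cleaner route that sidesteps this entirely: approximate $h$ by a function that is real-analytic (e.g.\ polynomial in local coordinates) near $\Sigma_0$. Since $H_{\Sigma_0} = \mathrm{div}(\nabla h/|\nabla h|)|_{\Sigma_0}$, real-analyticity of $h$ (and of $g$ after a further generic perturbation) makes $H_{\Sigma_0}$ real-analytic on $\Sigma_0$, hence either identically zero on a component or vanishing to finite order; the identically-zero case (a minimal component of $\Sigma_0$) is then excluded by a further perturbation. This replaces the jet-transversality machinery, with its delicate surjectivity verification over a moving domain, with a soft dichotomy for real-analytic functions, and it is the approach you should compare your plan against.
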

\noindent We denote by $\mc P^h$ the collection of free boundary $h$-hypersurfaces such that $\llbracket\Sigma\rrbracket=\partial \Omega$ for some open set $\Omega\subset N$. We record the main min-max theorem of \S 3 in Sun--Wang--Zhou.
\begin{theorem}[Theorem 3.11,\cite{SunWangZhou}]\label{thm:index bound for all g}
Let $(N^{n+1},\partial N,g)$ be a compact Riemannian manifold of dimension $3\leq (n+1)\leq 7$, and $h\in \mc S(g)$ which satisfies $\int_N h\geq 0$. Given a $k$-dimensional cubical complex $X$ and a subcomplex $Z\subset X$, let $\Phi_0:X\rightarrow\CC(N)$ be a map continuous in the $\mf F$-topology, and $\Pi$ be the associated $(X,Z)$-homotopy class of $\Phi_0$. Suppose 
\begin{equation}\label{eq:relative constraint}
\mf L^h(\Pi)>\max\big\{\max_{x\in Z}\AA^h(\Phi_0(x)),0\big\}.
\end{equation}
Then there exists a nontrivial, smooth, compact, almost embedded hypersurface with free boundary $(\Sigma^n,\partial\Sigma)\subset (N,\partial N)$ , such that
\begin{itemize}
\item $\llbracket\Sigma\rrbracket =\partial \Omega$ for some $\Omega\in\C(N)$, where the mean curvature of $\Sigma$ with respect to the unit outer normal of $\Omega$ is $h$, i.e.
\[H|_\Sigma = h|_\Sigma ;\]
\item $\AA^h(\Omega) =\mf L^h(\Pi)$;
\item $\mathrm{index}_w(\Sigma) \leq k$.
\end{itemize}
\end{theorem}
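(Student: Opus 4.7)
The plan is to follow the Almgren--Pitts combinatorial min-max scheme adapted to the weighted functional $\AA^h$ for free boundary prescribed mean curvature hypersurfaces, extending the closed case of \cite{ZhouZhu,ZhouMultiplicity} to manifolds with boundary. First, I would take an arbitrary minimizing sequence $\{\Phi_i\}\in\Pi$ and perform a pull-tight procedure: by assigning to each Caccioppoli set $\Omega$ whose boundary varifold is not $h$-stationary (in the sense that $\delta|\partial\Omega|(X)\neq\int_\Omega\mathrm{div}(hX)$ for some admissible vector field $X$ tangential to $\partial N$) a canonical $\AA^h$-decreasing isotopy, then interpolating continuously in the $\mathbf{F}$-topology, one modifies $\{\Phi_i\}$ without increasing $\mf L^h(\{\Phi_i\})$. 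The strict inequality $\mf L^h(\Pi)>\max_{x\in Z}\AA^h(\Phi_0(x))$ is precisely what permits carrying this modification out rel.\ $Z$, so that the new critical set $\mf C(\{\Phi_i\})$ consists entirely of $h$-stationary integral varifolds.

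Next, I would extract $V\in\mf C(\{\Phi_i\})$ that is $h$-almost minimizing in small annuli around every interior point of its support and in small relative half-annuli around every boundary point. This is the free boundary PMC analogue of Pitts' combinatorial argument: discretize the parameter space using Almgren's interpolation lemmas (adapted to $\mathcal{C}(N)$ and to $\AA^h$), and then run the Pitts--Marques--Neves exchange argument to prevent simultaneous deformability. The nontriviality of the homotopy class $\Pi$ combined with the strict inequality over $Z$ supplies the mountain-pass obstruction that forces existence of such $V$.

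Third, I would invoke the regularity theory for $h$-almost minimizing varifolds. For $h\in\mc S(g)$ the arguments of \cite{ZhouZhu} in the interior, together with the free-boundary refinements developed earlier in \cite{SunWangZhou}, upgrade $V$ to $|\Sigma|$ for a nontrivial, smooth, compact, almost embedded free boundary hypersurface $\Sigma$ with $H|_\Sigma = h|_\Sigma$ and $\llbracket\Sigma\rrbracket=\partial\Omega$ for some $\Omega\in\CC(N)$ satisfying $\AA^h(\Omega)=\mf L^h(\Pi)$. Finally, the weighted index bound $\mathrm{index}_w(\Sigma)\leq k$ follows from the Marques--Neves deformation scheme applied to the $h$-weighted Jacobi operator: were the index to exceed $k$, a $(k+1)$-parameter normal deformation at $\Sigma$ would strictly decrease $\AA^h$, and this family could be stitched into a competing sequence in $\Pi$ (the strict inequality on $Z$ again being what allows the stitching to preserve the homotopy rel.\ $Z$), contradicting the definition of $\mf L^h(\Pi)$.

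The main obstacle is the combination of the free boundary condition with the fact that $\AA^h$ is not scale-invariant. Blow-up arguments in both the almost-minimizing and regularity steps must be performed with care near $\partial N$, especially where the zero set $\Sigma_0 =\{h=0\}$ approaches or meets $\partial N$ transversally, and the almost-minimizing condition has to be formulated so that half-ball retractions continue to decrease $\AA^h$ rather than the unweighted mass $\mathbf{M}$. Handling touching singularities of the almost embedding and ensuring the density bound at free boundary touching points is where most of the technical work will concentrate.
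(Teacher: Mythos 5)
This statement is not proved in the paper under review: it is recalled verbatim as \cite[Theorem 3.11]{SunWangZhou} and used as a black box, so there is no in-paper argument to compare your proposal against. Against the source's actual proof, your outline is a faithful high-level reconstruction of the Sun--Wang--Zhou (and, upstream, Zhou and Zhou--Zhu) min-max scheme: pull-tight rel.\ $Z$ enabled by the strict inequality \eqref{eq:relative constraint}, extraction of an $h$-almost minimizing varifold in (half-)annuli via the Pitts combinatorial argument, free-boundary PMC regularity, and an index bound via the $k$-unstable deformation of Marques--Neves/Zhou. You also correctly flag the main technical pressure points (lack of scale invariance of $\AA^h$, behavior near $\partial N$ and near $\{h=0\}$, density at touching points).

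Two places where your sketch is looser than the source and where I would push you to be precise. First, the pull-tight step: $h$-stationarity for $\AA^h$ is a condition on the \emph{pair} $(\Omega,|\partial\Omega|)$, not on the varifold $|\partial\Omega|$ alone, since the bulk term $-\int_\Omega h$ breaks the symmetry $\Omega\leftrightarrow N\setminus\Omega$; the tightening flow must therefore be built on $\mathcal{C}(N)$ with the $\mathbf{F}$-metric, which is exactly the reason the $(X,Z)$-homotopy class in Definition \ref{HomotopyClassDef} lives in $\mathcal{C}(N)$ rather than in cycle space. Second, the index bound: your phrase ``a $(k+1)$-parameter normal deformation at $\Sigma$ would strictly decrease $\AA^h$, and this family could be stitched into a competing sequence'' compresses what is really an \emph{avoidance} argument. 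One does not append a local family to the sweepout; one shows that if every varifold in the critical set is $(k+1)$-unstable in the sense of Definition \ref{def:k-unstable}, then the family $\{F_v\}_{v\in\overline{B}^{k+1}}$ can be used to deform the entire minimizing sequence (on the full $k$-dimensional parameter space, using the associated maximizing map $m(\cdot)$ and the constant $c_0$) strictly below $\mathfrak{L}^h(\Pi)$ while staying in $\Pi$ and fixing $Z$. This is where the hypothesis $\mathfrak{L}^h(\Pi)>\max_{x\in Z}\AA^h(\Phi_0(x))$ is actually used twice, and where $\int_N h\ge 0$ and $\mathfrak{L}^h(\Pi)>0$ enter to rule out trivial or non-boundary limits; your write-up mentions the former but not the latter two. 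Filling those in would make the outline match the cited proof essentially step for step.
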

\noindent Here, we recall that by ``almost-embedded" we mean 
\begin{definition} \label{almostEmbeddingDef}
Let $U \subseteq M^{n+1}$ an open subset and $\Sigma^n$ a smooth $n$-dimensional manifold. A smooth immersion $\phi: \Sigma \to U$ is an \textit{almost embedding} if at any point $p \in \phi(\Sigma)$ where $\Sigma$ fails to be embedded, then there exists a small neighborhood $W \subseteq U$ or $p$ such that 
\begin{itemize}
    \item $\Sigma \cap \phi^{-1}(W)$ is a disjoint union of connected components, $\cup_{i = 1}^{\ell} \Sigma_i$ 
    \item $\phi(\Sigma_i)$ is an embedding for each $i = 1, \dots, \ell$
    \item For each $i$, any other component $\phi(\Sigma_j)$, $j \neq i$ lies on one side of $\phi(\Sigma_i)$ in $W$
\end{itemize}
\end{definition}
\noindent See figure \ref{fig:mult2touch} for a visualization of almost embeddedness in the context of PMCs.
\begin{figure}[h!]
    \centering
    \includegraphics[scale=0.4]{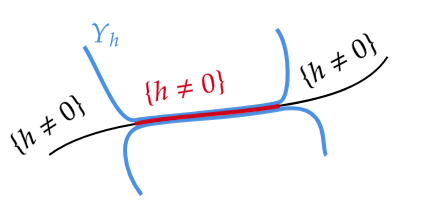}
    \caption{Almost embeddedness can occur on PMCs on large sets where $h \equiv 0$. See \cite[Remark 2.7]{zhou2019min}}
    \label{fig:mult2touch}
\end{figure}
\noindent We will often identify $\phi(\Sigma)$ with $\Sigma$ and $\phi(\Sigma_i)$ with $\Sigma_i$ when clear from context. The points in $\Sigma$ where $\Sigma$ fails to be embedded is the \textit{touching set}, $\mathcal{S}(\Sigma)$, and $\mathcal{R}(\Sigma) = \Sigma \backslash \mathcal{S}(\Sigma)$ is the regular set. In accordance with \cite[Prop 3.17]{ZhouZhu}, we also define a notion of ``almost embedded with optimal regularity"
\begin{definition} \label{almostEmbedOptimal}
We say that $\Sigma$ is ''almost embedded with optimal regularity" if $\mathcal{S}(\Sigma)$ is a contained in a countable union of connected, embedded $(n-1)$-dimensional submanifolds.
\end{definition}
\noindent Because a touching set occurs for a generic set of prescribing functions (\cite{ZhouZhu, ZhouMultiplicity}), we see that any PMC with a non-empty touching set will necessarily have points of density $2$. Despite this, we refer to a PMC, $Y \subseteq M$, as \textbf{\textit{multiplicity 1}} if each connected component of $Y$ is the image of a smooth immersion, $\phi: \Sigma \to M$, for $\Sigma$ a connected smooth n-dimensional manifold, such that $\phi$ is also an almost embedding. This allows for a multiplicity 1 PMC to have density $2$ points, in particular on open subsets of the hypersurface, see figure \ref{fig:dense2}. The notion of ``almost embedded with optimal regularity" is introduced to distinguish the case of when the set of points with density $2$ is ``small", e.g. $(n-1)$-dimensional. \nl 
\indent We also recall that the second variation of an $h$-PMC hypersurface $\Sigma=\partial \Omega$ along a normal vector field $\varphi \nu$ is given by 
%
\begin{equation} \label{SecondVariation}
\delta^2\mathcal{A}^h|_\Omega(X,X) = \int_{\Sigma} [|\n \varphi|^2 - (\Ric(\nu, \nu) + |A|^2 + \p_{\nu} h) \varphi^2] - \int_{\partial \Sigma} A^{\partial N}(\nu, \nu) \varphi^2
\end{equation}
where $A^{\partial N}$ is the second fundamental form of $\partial N$ and $A$ is the second fundamental form of $\Sigma$. However, the index is defined with respect to k-unstable deformations as in \cite[Definition 2.5]{SunWangZhou} - this is because free boundary $h$-hypersurfaces are a priori almost embedded. The notion of k-instability was introduced by Zhou \cite[Definition 2.1, 2.3]{ZhouMultiplicity} for closed $h$-hypersurfaces based on Marques-Neves \cite[Definition 4.1]{marques2015morse}.
	
	\begin{definition}\label{def:k-unstable}
		Given $\Sigma\in \mc P^h$ with $\Sigma=\partial \Omega$, $k\in\mb N$ and $\epsilon\geq 0$, we say that $\Sigma$ is {\em$k$-unstable in an $\epsilon$-neighborhood} if there exist $0<c_0<1$ and a smooth family $\{F_v\}_{v\in\oB^k}\subset \mathrm{Diff}(M)$ with $F_0=\Id, F_{-v}=F_v^{-1}$ for all $v\in\oB^k$ (the standard $k$-dimensional unit ball in $\mb R^k$) such that, for any $\Omega'\in\B^\F_{2\epsilon}(\Omega)$, the smooth function:
		\[\AA_{\Omega'}^h:\oB^k\rightarrow[0,+\infty),\ \ \ \ \AA^h_{\Omega'}(v)=\AA^h(F_v(\Omega'))\]
		satisfies
		\begin{itemize}
			\item $\AA^h_{\Omega'}$ has a unique maximum at $m(\Omega')\in B_{c_0/\sqrt {10}}^k(0)$;
			\item $-\frac{1}{c_0}\Id\leq D^2\AA^h_{\Omega'}(u)\leq -c_0\Id$ for all $u\in\oB^k$.
		\end{itemize} 
	\end{definition}
\noindent Since $\Sigma$ is a critical point of $\AA^h$, necessarily $m(\Omega)=0$.
	%
%
	\begin{definition}\label{def:Morse index}
		Assume that $\Sigma\in\mc P^h$. Given $k\in\mb N$, we say that {\em the weak Morse index of $\Sigma$ is bounded (from above) by $k$}, denoted as 
		\[\mathrm{index}_w{(\Sigma)}\leq k,\]
		if $\Sigma$ is not $j$-unstable in 0-neighborhood for any $j\geq k+1$. $\Sigma$ is said to be {\em weakly stable} if $\mathrm{index}_w(\Sigma)=0$.
	\end{definition}
\noindent We also recall the notion of the index of the regular set following Guang--Wang--Zhou \cite[\S 2.1]{guang2021compactness}. Given $\Sigma$ a two-sided, almost embedded hypersurface, let 
\[
C_c^{\infty}(\mathcal{R}(\Sigma)) = \{f \in C^{\infty}(\Sigma) \; : \; f \text{ vanishes in a neighborhood of } \mathcal{S}(\Sigma)\}
\]
We define $\mathrm{Index}(\mathcal{R}(\Sigma))$ to be the maximal dimension of a linear subspace of $C_c^{\infty}(\mathcal{R}(\Sigma))$ such that $\delta^2 \AA^h$ restricted to that subspace is negative definite. Clearly, if $\Sigma$ is almost embedded then 
\begin{equation} \label{weakIndexUpper}
\mathrm{Index}(\mathcal{R}(\Sigma)) \leq \mathrm{index}_w(\Sigma)    
\end{equation}
\noindent Given $\Lambda>0$ and $I\in\mb N$, let 
\begin{equation}\label{eq:def of Ph}
\mc P^h(\Lambda, I):=\{\Sigma\in\mc P^h: A(\Sigma)\leq \Lambda,\mathrm{index}_w(\Sigma)\leq I\}.
\end{equation}
We recall the following theorem of Sun--Wang--Zhou 
%
%
\begin{theorem}[Thm 2.9, \cite{SunWangZhou}]\label{thm:compactness for FPMC}
Let $(N^{n+1},\partial N,g)$ be a compact Riemannian manifold with boundary of dimension $3\leq (n+1)\leq 7$. Assume that $\{h_k\}_{k\in\mb N}$ is a sequence of smooth functions in $\mc S(g)$ such that $\lim_{k\rightarrow\infty}h_k=h_\infty$ in the smooth topology. Let $\{\Sigma_k\}_{k\in\mb N}$ be a sequence of hypersurfaces such that $\Sigma_k\in\mc P^{h_k}(\Lambda,I)$ for some fixed $\Lambda>0$ and $I\in \mb N$. Then,
\begin{enumerate}
\item \label{compactness thm:smooth limit} up to a subsequence, there exists a smooth, compact, almost embedded free boundary $h_\infty$-hypersurface $\Sigma_{\infty}$ such that $\Sigma_k\rightarrow\Sigma_\infty$ (possibly with integer multiplicity) in the varifold sense, and hence also in the Hausdorff distance by monotonicity formula;
\item \label{compactness thm:locally smoothly convergence} there exists a finite set of points $\mc Y\subset \Sigma_\infty$ with $\sharp(\mc Y)\leq I$, such that the convergence of $\Sigma_k\rightarrow\Sigma_\infty$ is locally smooth and graphical on $\Sigma_\infty\setminus \mc Y$;
\item \label{compactness thm:generic multiplicity one convergence} if $h_\infty\in\mc S(g)$, then the multiplicity of $\Sigma_\infty$ is 1, and $\Sigma_\infty\in\mc P^{h_\infty}(\Lambda,I)$;
\item \label{compactness thm:smooth and proper} assuming $\Sigma_k\neq \Sigma_\infty$ eventually and $h_k=h_\infty=h\in\mc S(g)$ for all $k$ and $\Sigma_k$ smoothly converges to $\Sigma_\infty$, then $\mc Y=\emptyset$, and $\Sigma_\infty$ has a non-trivial Jacobi field; 
\item \label{compactness thm:index decreasing}  if $h_k=h_\infty=h\in\mc S(g)$ and the convergence is not smooth, then $\mc Y$ is not empty and $\Sigma_\infty$ has strictly smaller weak Morse index than $\Sigma_k$ for all sufficiently large $k$;
\item \label{compactness thm:limit index bound} if $h_\infty\equiv0$ and $\Sigma_\infty$ is properly embedded, then the classical Morse index of $\Sigma_\infty$ satisfies $\mathrm{index}(\Sigma_\infty)\leq I$ (without counting multiplicity)
\item \label{compactness thm:weak index bound} if $\Sigma_{\infty}$ is multiplicity $1$, then $\Sigma_{\infty} \in \mc P^{h_{\infty}}(\Lambda, I)$.
\end{enumerate}
\end{theorem}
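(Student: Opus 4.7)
The plan is to mirror the structure that has become standard for compactness of min-max PMCs (following Zhou--Zhu and Zhou for the closed case, plus a free-boundary upgrade). First, because $A(\Sigma_k)\leq \Lambda$ and $H_{\Sigma_k}=h_k$ with $\|h_k\|_{C^0}$ uniformly bounded, each $V_k=|\Sigma_k|$ has uniformly bounded mass and uniformly bounded first variation in $N$ (the free boundary contribution is controlled because $\partial\Sigma_k\subset \partial N$ and $\Sigma_k$ meets $\partial N$ orthogonally). Thus up to a subsequence $V_k\rightharpoonup V_\infty$ as Radon measures/varifolds, and $V_\infty$ has generalized mean curvature $h_\infty$ in the interior and a free-boundary condition along $\partial N$. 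Monotonicity (interior and boundary versions) upgrades this to Hausdorff convergence of supports.

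Next I would establish the regularity of $V_\infty$ and produce the exceptional set $\mathcal{Y}$ with $\#\mathcal{Y}\leq I$. The weak Morse index bound $I$ implies that at every point outside a set of at most $I$ points, some fixed-size ball $B_r(p)$ is stable for all large $k$. On those stable balls I would invoke the curvature estimates for stable almost-embedded PMCs (Schoen--Simon--Yau/Schoen--Simon, adapted by Zhou--Zhu and Guang--Wang--Zhou to the PMC setting, with the free-boundary version needed near $\partial N$). These give uniform $C^{2,\alpha}$ bounds, which together with Allard's (and Allard's boundary) regularity theorem upgrade varifold convergence to locally smooth, graphical convergence (possibly with integer multiplicity) on $\mathrm{spt}(V_\infty)\setminus\mathcal{Y}$. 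This handles items (1) and (2); the fact that $V_\infty$ is almost embedded follows from the structure of the multi-sheet convergence and the maximum principle for PMC hypersurfaces on touching sets.

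For the multiplicity-one statement (3), the idea is that if $V_\infty=m\,|\Sigma_\infty|$ with $m\geq 2$, then the differences of consecutive sheets of $\Sigma_k$ (graphed over $\Sigma_\infty$ away from $\mathcal{Y}$), after normalization, produce a nontrivial nonnegative solution $u$ of the Jacobi equation $L_{\Sigma_\infty}u = -(\partial_\nu h_\infty)u$ on $\Sigma_\infty\setminus\mathcal{Y}$; one then shows $u$ extends across $\mathcal{Y}$. Combining this with the fact that $h_\infty\in\mathcal{S}(g)$ forces $h_\infty$ to vanish only to finite order on a codimension-$1$ set forces the sheets to separate, contradicting $m\geq 2$. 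Item (4) is then a direct consequence: if $h_k\equiv h_\infty$ and the convergence is smooth with $\Sigma_k\neq \Sigma_\infty$ eventually, the normalized graphical differences produce a genuine Jacobi field in the limit. Item (5) uses that when convergence fails to be smooth at a point of $\mathcal{Y}$, a rescaling limit produces a nontrivial stable or low-index PMC bubble whose existence strictly consumes at least one unit of index, giving $\mathrm{index}_w(\Sigma_\infty)<\mathrm{index}_w(\Sigma_k)$. Finally, (6) is the standard passage from weak to classical Morse index when $h_\infty\equiv 0$ and $\Sigma_\infty$ is properly embedded (so $\mathcal{R}(\Sigma_\infty)=\Sigma_\infty$ and weak/classical notions agree), and (7) follows from the same graphical analysis combined with \eqref{weakIndexUpper}. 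The main obstacle is the free-boundary refinement of each of these steps, in particular controlling the bubbling at $\mathcal{Y}\cap \partial N$ and ruling out multiplicity via the generic condition on $\{H_{\partial N}=\pm h_\infty\}$; both require the boundary versions of the monotonicity/Allard/curvature-estimate machinery developed in \cite{guang2021min, SunWangZhou}.
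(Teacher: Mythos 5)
The paper does not prove items (1)--(6) of this theorem: they are quoted from Sun--Wang--Zhou \cite[Thm 2.9]{SunWangZhou}, and the paper's only original content is the two remarks that follow --- one stating that (1)--(2) extend to arbitrary smooth $h_\infty$ (``no extra work is needed,'' since $h_\infty\in\mathcal{S}(g)$ is only used for the multiplicity statement), and one appending item (7), justified by the preservation of weak Morse index and area under multiplicity-one convergence, with a pointer to \cite[Thm 2.6, Part 2]{ZhouMultiplicity}. So there is no ``paper's own proof'' of the bulk of the statement to compare against; you are being asked to reconstruct the Sun--Wang--Zhou argument, not the paper's.

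With that understanding, your sketch does follow the Zhou--Zhu / Zhou / Sun--Wang--Zhou lineage faithfully at a high level: mass and mean curvature bounds give varifold compactness with bounded first variation; the index bound $I$ isolates at most $I$ concentration points $\mathcal{Y}$ and yields stability on definite-size balls off $\mathcal{Y}$, where Schoen--Simon type curvature estimates for stable almost-embedded PMCs (with free boundary versions from \cite{guang2021min}) upgrade convergence to graphical; the Jacobi field argument rules out higher multiplicity when $h_\infty\in\mathcal{S}(g)$; bubbling at $\mathcal{Y}$ yields (5); proper embeddedness collapses weak index to classical index for (6). For item (7) --- the one point the paper actually adds --- your reasoning (graphical multiplicity-one convergence plus \eqref{weakIndexUpper}) matches the paper's justification. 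One imprecision worth flagging: in your step for (3) you write that ``$h_\infty$ vanish[ing] only to finite order on a codimension-$1$ set forces the sheets to separate,'' but the actual mechanism in Zhou--Zhu is somewhat different --- it uses the Caccioppoli-set structure $\Sigma_k=\partial\Omega_k$ to force consecutive sheets to alternate orientation, then the maximum principle shows that if two same-orientation sheets exist they coincide and are minimal, which the $\mathcal{S}(g)$ zero-set condition rules out outside a small set. The finite-vanishing-order condition on $H_{\Sigma_0}$ enters in ruling out components of $\Sigma_\infty$ lying inside $\{h_\infty=0\}$, not directly in separating sheets. This is a sketch-level slip, not a wrong route.
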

\begin{remark} 
The above theorem has been modified so that points \ref{compactness thm:smooth limit} and \ref{compactness thm:locally smoothly convergence} hold for any smooth $h_{\infty}$, as opposed to the originally stated $h_{\infty} \in \mathcal{S}(g)$ or $h_{\infty} \equiv 0$. Note that no extra work is needed for this extension, as it is a direct adaptation of the original proof of \ref{compactness thm:smooth limit} and \ref{compactness thm:locally smoothly convergence} from \cite{ZhouZhu} to the boundary case. See a related remark in \cite[Remark 2.7]{ZhouMultiplicity}.
\end{remark}
\begin{remark}
Point \ref{compactness thm:weak index bound} is not stated in the original theorem  \ref{thm:compactness for FPMC}, however it follows immediately as weak Morse index and area bounds are preserved under limits when the convergence happens with multiplicity $1$. See e.g. \cite[Thm 2.6, Part 2]{ZhouMultiplicity}. 
\end{remark}
\begin{remark} \label{C1 alpha convergence}
If $\partial N = \emptyset$ or, more generally, if $\Sigma_k$ are eventually contained in a compact set $K \subset N\setminus \partial N$, then the requirement that $h_k \to h_\infty$ in the smooth topology can be weakened to $\|h_k - h_\infty\|_{C^{1,\alpha}(K)}\to 0$. This is a consequence of the deep regularity and compactness results for prescribed mean curvature varifolds by Bellettini-Wickramasekera \cite{BWStable,BWPrescribed}. More precisely, by the regularity theory for PMCs with prescribing functions in $\mathcal{S}^*(g)$ \cite[Theorem 7.1]{ZhouZhu}, we see that the Caccioppoli sets $\Omega_k$ with $\partial\Omega_k = \llbracket\Sigma_k\rrbracket$ satisfy assumptions (a1) and (a2) in \cite[Theorem 1.5]{BWPrescribed}. Hence, \cite[Corollary 1.3]{BWPrescribed} applies (see also its proof in Section 8 of \cite{BWPrescribed}), proving parts \ref{compactness thm:smooth limit} and \ref{compactness thm:locally smoothly convergence} in Theorem \ref{thm:compactness for FPMC} above. Then part \ref{compactness thm:generic multiplicity one convergence} is proved as in \cite{ZhouMultiplicity}, namely using Theorem 3.19 in \cite{ZhouZhu} and convergence as Caccioppoli sets, and part \ref{compactness thm:weak index bound} follows as in the previous remark.
\end{remark}

\noindent We also record their theorem with changing ambient metrics on $(N,\partial N)$.
\begin{theorem}[Thm 2.10, \cite{SunWangZhou}]\label{thm:compactness with changing metrics}
Let $(N^{n+1},\partial N)$ be a closed manifold of dimension $3\leq (n + 1)\leq 7$, and $\{g_k \}_{k\in\mb N}$ be a sequence of metrics on $(N,\partial N)$ that converges smoothly to some limit metric $g$. Let $\{h_k\}_{k\in\mb N}$ be a sequence
of smooth functions with $h_k\in\mc S(g_k)$ that converges smoothly to some limit $h_\infty \in C^\infty (N)$. Let $\{\Sigma_k\}_{k\in\mb N}$ be a sequence of hypersurfaces with $\Sigma_k\in\mc P^{h_k}(\Lambda,I;g_k)$ for some fixed $\lambda > 0$ and $I\in\mb N$. Then there exists a smooth, compact, almost embedded free boundary $h_\infty$-hypersurface $\Sigma_\infty$, such that points \ref{compactness thm:smooth limit}\ref{compactness thm:locally smoothly convergence}\ref{compactness thm:generic multiplicity one convergence} \ref{compactness thm:weak index bound} from theorem \ref{thm:compactness for FPMC} are satisfied.
\end{theorem}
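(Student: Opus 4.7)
The plan is to reduce this variable-metric compactness result to the fixed-metric Theorem \ref{thm:compactness for FPMC} by regarding each $\Sigma_k$ as an ``almost $h_k$-PMC'' hypersurface in the single fixed manifold $(N,\partial N, g)$. Since $g_k \to g$ smoothly, the mean curvature $H_{\Sigma_k}^g$ of $\Sigma_k$ computed in the limit metric satisfies $H_{\Sigma_k}^g = h_k + E_k$, where $|E_k| \leq C \|g_k - g\|_{C^2}(1 + |A_{\Sigma_k}|)$ and $E_k$ vanishes in the limit on any region where the second fundamental form of $\Sigma_k$ is uniformly bounded. Thus, once curvature estimates are in place, $\Sigma_k$ is a free boundary PMC in the fixed metric $g$ with respect to a prescribing function $\tilde h_k = h_k + E_k$ that converges smoothly to $h_\infty$ (and to $h_\infty$ in $\mc S(g)$, when the hypothesis of claim \ref{compactness thm:generic multiplicity one convergence} holds and $g_k\to g$ is close enough), and we may essentially appeal to Theorem \ref{thm:compactness for FPMC}.

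The main work is establishing uniform area and curvature bounds. Smooth convergence $g_k \to g$ gives $A_g(\Sigma_k) \leq (1+o(1))\Lambda$; combined with the weak index bound $\mathrm{index}_w(\Sigma_k) \leq I$, the local regularity theory for almost-embedded PMC hypersurfaces with free boundary (Bellettini--Wickramasekera type, extended to the free-boundary setting in \cite{ZhouZhu,SunWangZhou}) yields uniform local curvature estimates on $\Sigma_k$ in the metric $g$ away from a finite set $\mc Y$ of at most $I$ concentration points. Extracting a varifold limit $V_\infty$ with $\|V_\infty\|(N)\leq \Lambda$ and testing the first variation with vector fields tangent to $\partial N$, using $g_k\to g$ and $h_k\to h_\infty$ in $C^\infty$, shows that $V_\infty$ is stationary for $\mc A^{h_\infty}$ in the fixed metric $g$ with free boundary on $\partial N$. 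Away from $\mc Y$, the convergence $\Sigma_k\to\Sigma_\infty$ is locally smooth and graphical, giving conclusions \ref{compactness thm:smooth limit} and \ref{compactness thm:locally smoothly convergence}.

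For the multiplicity one claim when $h_\infty\in \mc S(g)$, I would run the Jacobi field argument of Zhou and Zhou--Zhu adapted to the free-boundary setting: if the multiplicity were $\geq 2$, writing the sheets of $\Sigma_k$ as graphs over $\Sigma_\infty\setminus\mc Y$ and rescaling their pairwise distance yields a nontrivial nonnegative solution to the linearized Jacobi equation on $\Sigma_\infty$ associated with $h_\infty$, whose nodal structure near $\{h_\infty=0\}$ contradicts the transversality and finite-order vanishing conditions built into the definition of $\mc S(g)$. Claim \ref{compactness thm:weak index bound} then follows by lower semicontinuity of the weak index under multiplicity-one smooth convergence, together with the upper bound $A_g(\Sigma_\infty)\leq \liminf_k A_{g_k}(\Sigma_k)\leq \Lambda$. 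The principal technical obstacle is verifying that the interior and boundary regularity estimates used in the fixed-metric proof are genuinely uniform as the ambient metric varies smoothly; since the relevant constants depend continuously on local $C^k$ norms of the metric, this amounts to a careful line-by-line inspection of the proof of Theorem \ref{thm:compactness for FPMC} rather than any new idea.
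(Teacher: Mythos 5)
This is a cited result (Theorem 2.10 of Sun--Wang--Zhou), which the paper records without proof and only supplements with a remark that, as in Theorem \ref{thm:compactness for FPMC}, points \ref{compactness thm:smooth limit} and \ref{compactness thm:locally smoothly convergence} extend verbatim to general $h_\infty \in C^\infty(N)$ (not just $h_\infty\in\mathcal{S}(g)$ or $h_\infty\equiv 0$). Your sketch should therefore be compared against the proof in \cite{SunWangZhou}, not against anything in this paper.

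Your paragraphs 2--3 do follow the structure of that proof: derive uniform local curvature estimates on $\Sigma_k$ away from a set $\mathcal{Y}$ of at most $I$ points, noting that the constants in the epsilon-regularity and Schoen-type estimates depend continuously on the $C^k$ norms of $g_k$ and $h_k$; extract a varifold limit with free boundary that is $\mathcal{A}^{h_\infty}$-stationary; upgrade to locally smooth graphical convergence off $\mathcal{Y}$; and run the Jacobi field / nodal set argument near $\{h_\infty=0\}$ for multiplicity one when $h_\infty\in\mathcal{S}(g)$. That is the right plan, and the closing observation that the whole thing amounts to verifying metric-continuity of constants is also the right diagnosis of where the real work lies.

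The frozen-metric reduction in your first paragraph, however, has a genuine gap as written. You want to regard $\Sigma_k$ as an $\tilde h_k$-PMC in the fixed metric $(N,g)$ with $\tilde h_k = h_k + E_k$, and then invoke Theorem \ref{thm:compactness for FPMC}. But $E_k$ is an intrinsic quantity on the hypersurface $\Sigma_k$ (it involves $A_{\Sigma_k}$ and the tangent plane of $\Sigma_k$), not a function on the ambient manifold $N$; the fixed-metric theorem requires the prescribing function to live in $\mathcal{S}(g)\subset C^\infty(N)$, and its compactness mechanism uses this. Extending $E_k$ off $\Sigma_k$ introduces a dependence on $\Sigma_k$ itself, and controlling such an extension in $C^\infty(N)$ uniformly in $k$ already requires the very curvature bounds the reduction was meant to deliver — so the reduction is circular. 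Since you ultimately do not use it (your actual argument is the direct one), this is not fatal, but the first paragraph should be dropped or rewritten as heuristic motivation rather than a step in the proof.

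Two smaller points. First, the index inequality passes to the limit the wrong way for \ref{compactness thm:weak index bound}: what you need is upper semicontinuity of the weak index along multiplicity-one smooth convergence (a sequence of $k$-unstable deformations for $\Sigma_\infty$ pulls back to $k$-unstable deformations for $\Sigma_k$ for large $k$), not lower semicontinuity. Second, for conclusions \ref{compactness thm:smooth limit} and \ref{compactness thm:locally smoothly convergence} as stated in this paper, you should note explicitly that they hold for arbitrary $h_\infty\in C^\infty(N)$; the curvature estimates and varifold convergence steps you outline do not use membership in $\mathcal{S}(g)$, so your argument already covers this, but it is worth flagging since the paper's restatement is precisely this generalization.
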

\noindent Again, we make the same change in theorem \ref{thm:compactness with changing metrics} to include all $h \in C^{\infty}(N)$, regardless of if $h \in \mathcal{S}(g)$ or $h \equiv 0$.
In the proof of Theorem \ref{mainTheorem}, we will construct almost embedded $h$-PMCs as limits of prescribed mean curvature boundaries produced via min-max. In order to obtain infinitely many such PMCs, we need to rule out multiple copies of the same hypersurfaces as potential limits, and this requires us to study the density of $h$-PMCs for functions $h$ which are not \emph{globally} good prescribing functions, but satisfy the Assumptions \ref{hCompactSupport} above. This can be achieved provided the limit contains no minimal components, as explained in the proof of the compactness result \cite[Theorem 3.19]{ZhouZhu}. We record this below, and sketch its proof for convenience.
\begin{theorem}[Thm 3.19, \cite{ZhouZhu}] \label{densityTheorem}
Under the same hypotheses from Theorem \ref{thm:compactness with changing metrics}, suppose that $\Sigma_k=\partial \Omega_k$ are closed boundaries and that the closed, almost embedded limit $h$-PMC, $\Sigma_\infty$, does not contain any minimal connected component. If $p \in\Sigma_\infty\setminus \mathcal{Y}$ and $h(p)\neq 0$, then the density of the varifold induced by the immersion $\Sigma_\infty \to M$ at $p$ is either $1$, if $p$ is a regular point, or $2$, if $p$ is a touching point. In general, the touching set has density $2$ and the regular set has density $1$.
\end{theorem}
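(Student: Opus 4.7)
The plan is to adapt the pinching-style argument of Zhou--Zhu \cite{ZhouZhu} to our setting, exploiting the hypothesis that each $\Sigma_k = \partial \Omega_k$ is an \emph{oriented boundary} together with the no-minimal-component assumption on $\Sigma_\infty$. The key observation is that if several sheets of $\Sigma_k$ collapse onto a single sheet of $\Sigma_\infty$, their outer normals (as boundaries of $\Omega_k$) must \emph{alternate}, and this is incompatible with $h(p)\neq 0$ in the limit.

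First I would work in a small ball $B_r(p)$ around a regular point $p \in \Sigma_\infty \setminus \mc Y$ with $h(p)\neq 0$. By parts \ref{compactness thm:smooth limit}--\ref{compactness thm:locally smoothly convergence} of Theorem \ref{thm:compactness with changing metrics}, $\Sigma_\infty \cap B_r(p)$ is a single smooth embedded disk $D$, and for large $k$ the intersection $\Sigma_k \cap B_r(p)$ is a disjoint union of smooth graphs $G_{k,1}, \ldots, G_{k,m}$ over $D$ (ordered by height), where $m$ equals the density $\Theta(\|\Sigma_\infty\|, p)$. Since $\Sigma_k = \partial \Omega_k$, these graphs divide $B_r(p)$ into regions that alternately lie inside and outside $\Omega_k$, so the outer unit normals to $\Omega_k$ along consecutive graphs $G_{k,j}$ and $G_{k,j+1}$ point in opposite directions. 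Fixing one unit normal $\nu$ to $D$, the mean curvature of $G_{k,j}$ with respect to $\nu$ is therefore $(-1)^{j-1} h_k\big|_{G_{k,j}}$.

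Passing to the limit $k \to \infty$ using smooth graphical convergence, if $m \geq 2$ then the mean curvature of $D$ with respect to $\nu$ would simultaneously equal $h(p)$ and $-h(p)$, forcing $h(p)=0$ and contradicting our assumption. Hence $m=1$, giving density one at regular points with $h(p)\neq 0$. For a touching point $p \in \mc S(\Sigma_\infty)$, the almost-embedded structure gives two sheets $D^{\pm}$ of $\Sigma_\infty$ touching at $p$ which bound $\Omega_\infty$ from opposite sides; applying the same alternation argument to the families of $\Sigma_k$-graphs accumulating on each of $D^+$ and $D^-$ separately, each sheet has multiplicity one, for a total density of two.

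Finally, to upgrade to the general statement (dropping the $h(p)\neq 0$ assumption), I would use the no-minimal-component hypothesis: on each connected component of the regular set $\mathcal{R}(\Sigma_\infty)$, the mean curvature is $h$, which is not identically zero on that component. Thus $\{h \neq 0\}$ is open and dense in the component, where density equals one by the previous step; since density is integer-valued and upper semi-continuous, constancy of density on each connected component of $\mathcal{R}(\Sigma_\infty)$ propagates this to density one everywhere, and similarly density two on the touching set. The main technical obstacle I anticipate is correctly bookkeeping the alternating orientations and the graphical decomposition at a touching point, where the local structure of $\Sigma_k$ itself may contain touching points that must be shown to converge in a controlled way to the touching locus of $\Sigma_\infty$.
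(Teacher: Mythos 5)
Your proof shares the central idea with the paper's (use the boundary structure $\Sigma_k=\partial\Omega_k$ to force alternating orientations, and deduce $h(p)=0$ whenever too many sheets meet at $p$), but it has a genuine gap precisely at the touching point case. You write that ``the almost-embedded structure gives two sheets $D^\pm$ of $\Sigma_\infty$ touching at $p$,'' but the definition of an almost-embedding allows any number $\ell\geq 2$ of sheets to meet at a touching point, and bounding $\ell\leq 2$ is exactly the nontrivial content of the theorem there. The paper's proof handles this by a pigeonhole argument on the limit sheets: if $\ell>2$, two sheets $\Sigma_\infty^i,\Sigma_\infty^j$ (with $i<j$ in the graphical ordering) have $\nu_\infty^i(p)=\nu_\infty^j(p)$, and between them sits a sheet $\Sigma_\infty^m$ with the opposite orientation, by the Constancy Theorem applied to the $\Sigma_k=\partial\Omega_k$ converging graphically. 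Since all these sheets touch at $p$ and $\Sigma_\infty^i,\Sigma_\infty^j$ are ordered one-sided graphs with the same orientation and mean curvature $h$, the strong maximum principle for $h$-PMCs (\cite[Lemma 3.11]{ZhouZhu}) forces $\Sigma_\infty^i=\Sigma_\infty^j$, and by the graphical ordering also $=\Sigma_\infty^m$; the common surface then has prescribed mean curvature $h$ with respect to both unit normals, hence is minimal near $p$ and $h(p)=0$. Your limit-of-graph argument, which cleanly handles the multiplicity-of-convergence over a single sheet (where the $G_{k,j}$ and $G_{k,j+1}$ converge to the \emph{same} disk $D$ and one gets $h=-h$ directly), does not by itself rule out $\ell\geq 3$, because in that case the sheets are genuinely distinct away from $p$ and a maximum-principle step is unavoidable.

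A secondary issue: your claim that $D^+$ and $D^-$ ``bound $\Omega_\infty$ from opposite sides'' is unjustified; the varifold limit $\Sigma_\infty$ need not be the boundary of a Caccioppoli set, and the orientations of the limit sheets must be inferred from the approximating boundaries $\partial\Omega_k$ via the Constancy Theorem, as the paper does. On the other hand, your argument for multiplicity-one convergence on the regular set (comparing the limit mean curvatures of consecutive $\Sigma_k$-graphs over the single limit disk $D$, yielding $h(p)=-h(p)$) is sound and is a nice complement to the paper's sketch, which focuses on the touching-point case. Your final paragraph, propagating the density bound from the open dense set $\{h\neq 0\}$ to all of $\mathcal{R}(\Sigma_\infty)$ via the no-minimal-component hypothesis and constancy of density, is also a reasonable way to supply the ``in general'' part, which the paper delegates to \cite{ZhouZhu}. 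To close the gap, you should handle $\ell\geq 3$ by exactly the paper's pigeonhole + maximum-principle step rather than assuming two sheets.
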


\begin{proof}
By conclusions \ref{compactness thm:smooth limit} and \ref{compactness thm:locally smoothly convergence} in Theorems \ref{thm:compactness for FPMC} and \ref{thm:compactness with changing metrics} (for closed PMCs), we can choose a neighborhood $B_p$ of $p$ in which $\Sigma_\infty$ has a graphical decomposition $\bigcup_{i=1}^l \Sigma_\infty^i$, where each sheet is a $h_\infty$-PMC with respect to the normal vector $\nu_\infty^i$, $p \in \bigcap_{i=1}^l\Sigma_\infty^i$ and $l$ is the density at $p$.

Suppose $p$ is a touching point with $l> 2$. We will follow the proof of \cite[Theorem 3.19]{ZhouZhu} to conclude $h(p)=0$ and reach a contradiction. Since all sheets contain $p$, there are two of them with the same orientation, namely $\nu_\infty^i(p) = \nu_\infty^j(p)$ for some $1\leq i <j\leq l$. But then there must be a third sheet $\Sigma_\infty^m$ with $i<m<j$ and $\nu_\infty^m(p) = -\nu_\infty^j(p)$. This can be shown by using the Constancy Theorem \cite[Theorem 26.27]{simon1983lectures}, the graphical convergence of $\Sigma_k$, and the assumption $\Sigma_k = \partial \Omega_k$. On the other hand, since all sheets touch at $p$ and $\Sigma_\infty^i$ and $\Sigma_\infty^j$ have the same orientation, by the maximum principle (see \cite[Lemma 3.11]{ZhouZhu}) $\Sigma_\infty^i=\Sigma_\infty^j$. By the graphical decomposition, these coincide with $\Sigma_\infty^m$ as well, and shows that $\Sigma_\infty \cap B_p$ has mean curvature $h$ with respect to both orientations, so $\Sigma_\infty \cap B_p$ is minimal, and in particular $h(p)=0$.
\end{proof}
\subsubsection{The One Parameter Almgren--Pitts Width of a manifold with boundary} \label{APWidthBoundarySection}
As mentioned in \S \ref{MinMaxBackground}, we can define $W_0 = W_0(M, g)$, the \emph{one parameter Almgren--Pitts width} \cite{Almgren,Pitts}. Let $\mathcal{R}$ denote the set of all continuous maps $\sigma: [0,1] \to (\CC(M), \mathcal{F})$ such that $\sigma(0) = M$, $\sigma(1) = \emptyset$ and $\partial \circ \sigma: [0,1] \to \Znr(M, \partial M; \mathcal{F}; \Z_2)$ has no concentration of mass. Then 
\[
W_0(M, g) := \text{inf}_{\sigma \in \mathcal{R}} \sup_{t \in [0,1]} \mathbf{M}(\partial \sigma(t))
\]
$W_0$ is a min-max value associated to the fundamental class of $M$ under the isomorphism $H_{n+1}(M,\partial M;\Z_2) \simeq \pi_1(\Znr(M,\partial M;\Z_2))$. Concretely, if $\sigma \in \mathcal{R}$, then $\partial \circ \mathcal{R}$ is a closed path in $\Znr(M,\partial M;\Z_2)$ based at the zero cycle, and such a path lifts to an element of $\mathcal{R}$ if and only if it has no concentration of mass and it is homotopically nontrivial. \nl 
%
\indent To see that such a nontrivial path in $\Znr(M, \p M; \mathcal{F}; \Z_2)$ exists (and hence show that $\mathcal{R}$ is non-empty), we proceed with a similar idea as in lemma \ref{topology_znr}. Choose a morse function $f: M \to [0,1]$ so that the level sets $f^{-1}(t)$ intersects $\partial M$ transversely. Then the map
\begin{align*}
\gamma &: [0,1] \to \Znr(M, \partial M; \mathcal{F}; \Z_2) \\
\gamma(t) &= f^{-1}(t)
\end{align*}
is well defined and has no concentration of mass by \cite[Lemma 5.2]{MarquesNevesPositive}.
Moreover by lemma \ref{topology_znr}, $\gamma(t)$ lifts to a map 
\begin{align*}
\sigma&: [0,1] \to (\CC(M), \mathcal{F}) \\
\sigma(t) &= \{x \in M \; | \; f(x) < t\} \\
\partial \circ \sigma(t) &= \gamma(t)
\end{align*}
%
Thus $R \neq \emptyset$, and from the definition, $\omega_1(M,g) \leq W_0(M,g)$. 
\subsection{Gluing a cylindrical end} \label{construction}
In this section, we recall Song's construction of a cylindrical end \cite[\S 2.2]{song2018existence}, as the reader may not be familiar with the construction, and the details are essential to our work. \nl
\indent We provide a sketch as follows: suppose that $\Sigma = \partial M$ is minimal and each component of $\Sigma = \sqcup_{i = 1}^m \Sigma_i$ admits a \textit{contracting neighborhood}. This means that for each $i$, there exists a one-sided local foliation, $\{\Sigma_{i,t}\}$, such that the mean curvature vectors of $\Sigma_{i,t}$ point toward $\Sigma_i$.
%
See figure \ref{fig:contractingngbd} for a visualization. 
\begin{figure}[h!]
	\centering
	\includegraphics[scale=0.4]{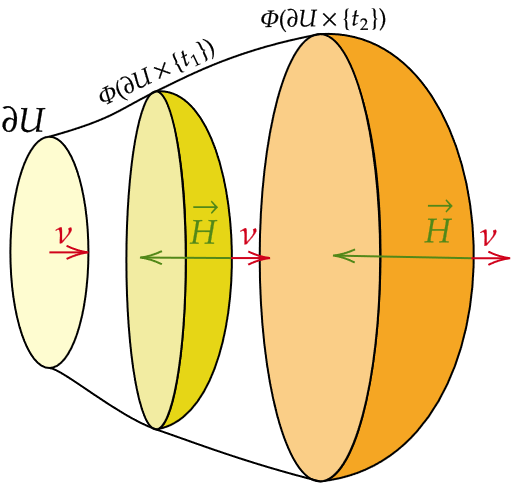}
	\caption{Visualization of the contracting neighborhood near our strictly stable minimal surface $\Sigma = \partial U$}
	\label{fig:contractingngbd}
\end{figure}
We will be interested in the non-compact manifold
\[
\Cyl(M) = M \sqcup_{\{\Sigma_i\}_{i=1}^m} (\Sigma_i \times \R^{\geq 0})
\]
where the metric on $\Sigma_i \times \R^{\geq 0}$ is given by $h_{i} + dt^2$ where $h_i = g \Big|_{T \Sigma_i}$ (see figure \ref{fig:cylindricalend}). We will often suppress the $i$ index, noting that the construction can be done component by component. In this decomposition, we may refer to $M = \text{Core}(M) \subseteq \Cyl(M)$ as the ``core" of $\Cyl(M)$.
\begin{figure}[h!]
	\centering
	\includegraphics[scale=0.3]{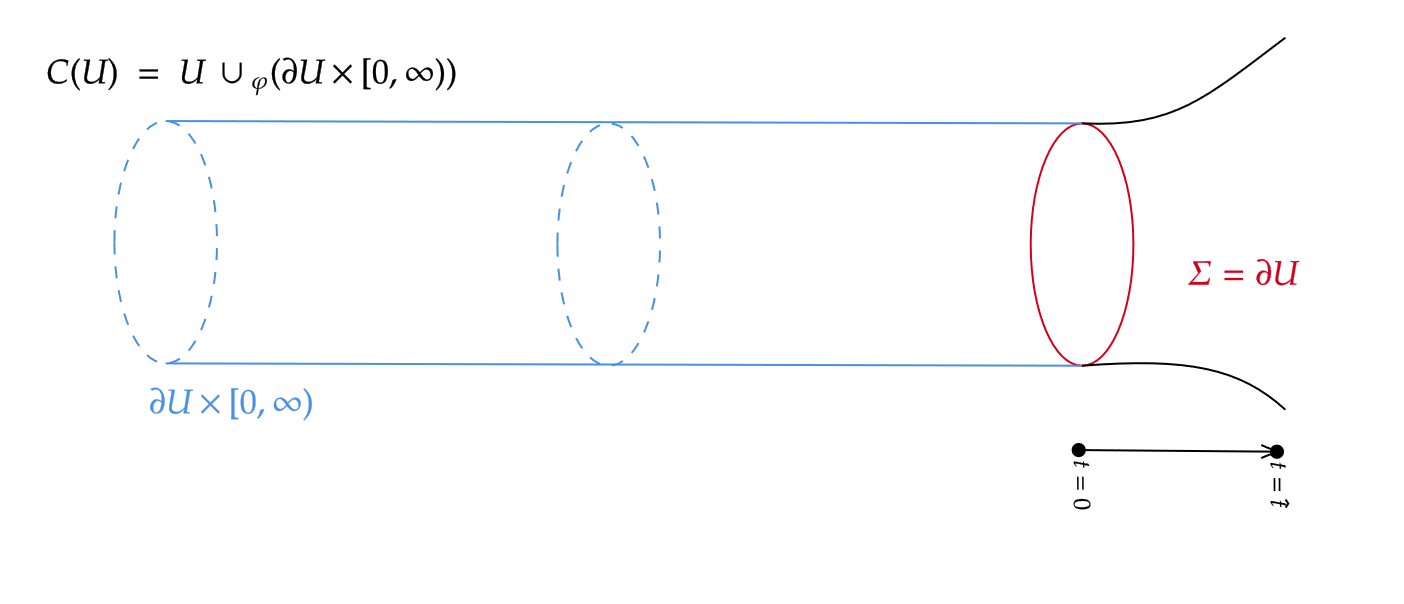}
	\caption{Visualization of limit metric and manifold, $\Cyl(U)$}
	\label{fig:cylindricalend}
\end{figure}
Roughly speaking, Song constructs a manifold, $(U_{\eps}, g_{\eps})$, such that $(U_{\eps}, g_{\eps}) \xrightarrow{\eps \to 0} \Cyl(M)$ in $C^{\infty}_{loc}$ outside a small ``folding region" (see \cite[Lemma 6]{song2018existence}). Despite the low regularity of the convergence, many geometric quantities (such as the p-widths) still converge. We present the formal details below (with new visualizations), which are taken from Song \cite[\S 2.2]{song2018existence} with only minor notational changes: \nl
%
\indent Let $(U,g)$ be a smooth, connected compact Riemannian manifold with boundary. Suppose that $\partial U$ is a minimal and a neighborhood of $\partial U$ in $U$ is smoothly foliated by closed leaves whose mean curvature vectors are pointing towards $\partial U$, i.e. there exists a diffeomorphism
\[
\Phi : \partial U \times [0,\hat{t}] \to U
\]
where $\Phi(\partial U \times \{0\}) = \partial U$ is minimal, and for all $t\in(0,\hat{t}]$, the leaf $\Phi(\partial U \times \{t\})$ has mean curvature vector pointing towards $\partial U$. By convention, $\Phi(\partial U \times \{t\})$ has positive mean curvature with respect to the normal vector in the direction given by $\frac{\partial}{\partial t}$. We record the following well known lemma which provides a quantitative description of the mean curvature of the leaves when $\partial U$ is strictly stable:
\begin{lemma} \label{quantMCLemma}
For $\partial U = \Sigma$ a strictly stable minimal surface, there exists a diffeomorphism  $\Phi: \Sigma \times [0, \hat{t}] \to U$ and $\hat{t}$ sufficiently small such that $H_{\Phi(\Sigma, t)} \geq C t$ for all $t \in [0,t]$
\end{lemma}
\noindent The lemma is implicit from \cite[Lemma 11]{song2018existence} and follows by letting $\Phi(s,t) = \exp_{\Sigma}(\phi_0(s) t)$ for $t$ sufficiently small, where $\phi_0(s)$ is the first eigenfunction of the Jacobi operator (and hence positive everywhere).  \nl 
\indent Let $\varphi: \partial U \times \{0\} \to \partial U$ be the canonical identification. Define the following non-compact manifold with cylindrical ends:
$$\Cyl(U):= U \cup_\varphi (\partial U \times [0,\infty)).$$
We endow it with the metric $h\Big|_U = g$ and $h=g\llcorner{\partial U} \oplus ds^2$ on $\p U \times [0, \infty)$ and denote this metric as $g_{\Cyl}$. Here $g\llcorner{\partial U}$ is the restriction of $g$ to the tangent bundle of the boundary $\partial U$ and $g\llcorner{\partial U} \oplus ds^2$ is the product metric on $\partial U \times [0,\infty)$. Note that the metric $h$ may only be Lipschitz continuous.

Next, we define for any small $\epsilon>0$ a compact Riemannian manifold with boundary, $(U_\epsilon,g_\epsilon)$, diffeomorphic to $U$ and converging to $(\Cyl(U),g_{\Cyl})$ as $\epsilon\to 0$ in a sense to be defined later. For $0<\epsilon<\hat{t}$, define  
\[
\tilde{U}_\epsilon = U \backslash \Phi(\partial U \times [0,\epsilon))
\]
with boundary $\partial \tilde{U}_\epsilon= \Phi(\partial U \times \{\epsilon\})$. For a small positive number $\delta_{\epsilon}>0$, the map
\begin{align*}
\tilde{\gamma}_\epsilon &: \partial \tilde{U}_\epsilon \times [-\delta_{\epsilon},0] \to M \\
\tilde{\gamma}_\epsilon(x,t) &= \exp({x}, t \nu)
\end{align*}
is well-defined and gives Fermi coordinates on one side of $\partial \tilde{U}_\epsilon$. We take $\delta_{\epsilon}>0$ small so that 
\begin{itemize}
\item $\lim_{\epsilon\to 0} \delta_\epsilon=0,$
\item $\tilde{\gamma}_\epsilon(\partial \tilde{U}_\epsilon \times [-\delta_{\epsilon},0]) \subset \Phi(\partial U \times [0,\epsilon])$, 
\item and for all $t\in [-\delta_{\epsilon},0]$, the hypersurface $\tilde{\gamma}_\epsilon( \partial \tilde{U}_\epsilon \times\{t\})$ has positive mean curvature. (The normal vector is in the direction of $\nu$.) 
\end{itemize}
For $0 < \epsilon<\hat{t}$, we define
$${U}_\epsilon = \tilde{U}_\epsilon \cup \tilde{\gamma}_\epsilon (\partial \tilde{U}_\epsilon \times [-\delta_{\epsilon},0])$$
where by convention $U_0 = U$. \nl
\indent For a positive number $\hat{d}>0$ small enough and fixed independently of $\epsilon\in[0,\hat{t})$, consider the Fermi coordinates on a $\hat{d}$-neighborhood of $\partial {U}_\epsilon$: 
$$\gamma_\epsilon : \partial {U}_\epsilon \times [0,\hat{d}] \to U$$
$$\gamma_\epsilon(x,t) = \exp({x}, t \nu)$$
where $\exp$ is the exponential map with respect to $g$, $\nu$ is the inward unit normal of $\partial {U}_\epsilon$.
We remark that for all $s\in [0,\delta_\epsilon]$, 
$$\gamma_\epsilon (\partial {U}_\epsilon\times \{s\}) = \tilde\gamma_\epsilon (\partial \tilde{U}_\epsilon\times \{-\delta_\epsilon+s\}).$$ For each small positive $\epsilon$, choose a smooth function $\vartheta_\epsilon:[0,\delta_\epsilon] \to \mathbb{R}$ and $z_\epsilon\in(0,\delta_\epsilon)$ with the following properties:
\begin{enumerate}
\item \label{enum:vepsProps} $1\leq \vartheta_\epsilon$ and $\frac{\partial}{\partial t}\vartheta_\epsilon \leq 0$,
\item $\vartheta_\epsilon\equiv1$ in a neighborhood of $\delta_\epsilon$,
\item \label{enum:constzeps} $\vartheta_\epsilon$ is constant on $[0,z_\epsilon]$,
\item \label{enum:bigInt} $\lim_{\epsilon\to 0}\int_{[0,\delta_\epsilon]}\vartheta_\epsilon =\infty$,
\item \label{enum:foldingRegionSmall} $\lim_{\epsilon\to 0}\int_{[z_\epsilon,\delta_\epsilon]}\vartheta_\epsilon =0$.
\end{enumerate}
This function naturally induces a function on $U_\epsilon$ still called $\vartheta_\epsilon$, defined by
$$\vartheta_\epsilon(\gamma_\epsilon(x,t)) = \vartheta_\epsilon(t) \text{ for all } (x,t)\in \partial U_\epsilon \times [0,\delta_\epsilon]$$
and extended continuously by $1$.
The original metric $g$ can be written in the Fermi coordinates $\gamma_\epsilon$ as $g_t\oplus dt^2$. Now, by abuse of notation, define the following smooth metric $g_\epsilon$ on $U_\epsilon$:
\begin{equation} \label{modifiedMetric}
g_\epsilon(q)= \Big\{ \begin{array}{rcl}
g_t(q) \oplus (\vartheta_\epsilon (q) dt)^2 & \text{ for $q\in \gamma_\epsilon(\partial U_\epsilon \times [0,\delta_\epsilon])$} \\
g(q) & \text{ for $q\in U_\epsilon\backslash \gamma_\epsilon(\partial U_\epsilon \times [0,\delta_\epsilon])$} 
\end{array}
\end{equation}
This gives a compact manifold with boundary $(U_\epsilon, g_\epsilon)$ - see figure \ref{fig:piecewisemetric} below for a  visualization.
\begin{figure}[h!]
\centering
\includegraphics[scale=0.4]{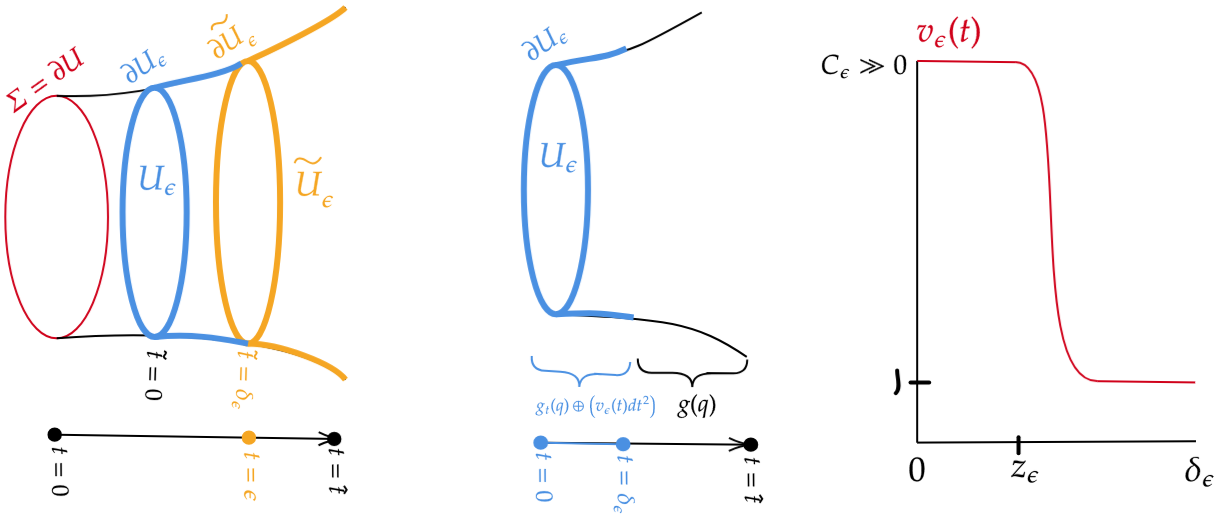}
\caption{Visualization of the metrics $g_{\eps}$ in terms of scaling function $v_{\eps}(t)$}
\label{fig:piecewisemetric}
\end{figure}
We recall the following relevant lemmas from Song:
\begin{lemme}[Lemma 4, \cite{song2018existence}] \label{meancurv}
Let $(U_\epsilon,g_\epsilon)$ be defined as above. By abuse of notations, consider $\partial U_\epsilon \times [0,\delta_{\epsilon}]$ as a subset of $U_\epsilon$ via $\gamma_\epsilon$. Then for $t\in [0,\delta_{\epsilon}]$, the slices $\partial U_\epsilon \times \{t\}$ satisfy the following with respect to the new metric $h_\epsilon$:
\begin{enumerate}
\item they have non-zero mean curvature vector pointing in the direction of $-\frac{\partial}{\partial t}$,
\item their mean curvature goes uniformly to $0$ as $\epsilon$ converges to $0$,
\item their second fundamental form is bounded by a constant $C$ independent of $\epsilon$.
\end{enumerate}
\end{lemme}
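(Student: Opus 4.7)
The plan is to carry out the analysis in the $\gamma_\epsilon$ Fermi coordinates, where the modified metric reads $g_\epsilon = g_t + \vartheta_\epsilon(t)^2\, dt^2$ on $\partial U_\epsilon \times [0,\delta_\epsilon]$, and reduce all three conclusions to a single warped-product comparison with the original metric. The key observation is that $g_\epsilon$ differs from $g$ only by a smooth reparametrization of the normal direction: setting $s(t) = \int_0^t \vartheta_\epsilon(\tau)\, d\tau$ one has $ds = \vartheta_\epsilon\, dt$, and the metric takes the standard form $g_{t(s)} + ds^2$ with $g_\epsilon$-unit normal $\partial_s = \vartheta_\epsilon(t)^{-1}\partial_t$. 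The first step would be the routine warped-product computation showing that, as a $(0,2)$-tensor on the slice $\{t=t_0\}$,
\[
A^{g_\epsilon}(t_0) \;=\; \vartheta_\epsilon(t_0)^{-1}\, A^{g}(t_0), \qquad H^{g_\epsilon}(t_0) \;=\; \vartheta_\epsilon(t_0)^{-1}\, H^{g}(t_0),
\]
where $A^g, H^g$ denote the second fundamental form and mean curvature of the same slice computed in the original $g$. Note that the induced metric on the slice is $g_{t_0}$ in either ambient metric, so norms also rescale as $|A^{g_\epsilon}|_{g_\epsilon} = \vartheta_\epsilon^{-1}|A^g|_g$ and similarly for $H^{g_\epsilon}$.

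With this rescaling identity in hand, all three conclusions reduce to properties of the foliation in the original metric combined with the monotone lower bound $\vartheta_\epsilon \geq 1$ (property \ref{enum:vepsProps}). All slices $\gamma_\epsilon(\partial U_\epsilon \times \{t\})$ for $t\in[0,\delta_\epsilon]$ lie inside the shrinking collar $\Phi(\partial U \times [0,\epsilon])$, which collapses smoothly onto the minimal boundary $\partial U$ as $\epsilon \to 0$. Since $\partial U_\epsilon \to \partial U$ in $C^\infty$, these Fermi slices converge smoothly (after a smooth reparametrization of the normal coordinate) to the $\Phi$-slices accumulating on $\partial U$. Consequently $|A^g|$ is uniformly bounded on them by a constant $C$ depending only on the fixed foliation near $\partial U$, and $|H^g|$ tends to $0$ uniformly in $t\in[0,\delta_\epsilon]$ as $\epsilon \to 0$. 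Combined with $\vartheta_\epsilon\geq 1$ these yield $|A^{g_\epsilon}| \leq C$ (conclusion (3)) and $|H^{g_\epsilon}| \leq |H^g| \to 0$ uniformly (conclusion (2)). For conclusion (1), the contracting neighborhood hypothesis on $\Phi$ forces each $\Phi$-slice $\Phi(\partial U \times \{r\})$ with $r>0$ to have nonzero mean curvature vector pointing toward $\partial U$; by the smooth convergence above, the same holds for the $\gamma_\epsilon$-slices, and the positive scaling factor $\vartheta_\epsilon^{-1}$ preserves both the non-vanishing and the $-\partial/\partial t$ direction.

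The main obstacle is not analytic but one of bookkeeping: one must carefully identify the two coordinate systems near $\partial U$, namely the $\gamma_\epsilon$ Fermi coordinates based at the moving hypersurface $\partial U_\epsilon$ and the given $\Phi$ foliation based at the fixed $\partial U$, and verify that they differ only by a smooth change of normal coordinate converging to the identity as $\epsilon \to 0$. This follows from the smooth dependence of the normal exponential map on its basepoint, once one observes that $\partial U_\epsilon$ is constructed as a Fermi $\delta_\epsilon$ push-off of $\partial \tilde U_\epsilon = \Phi(\partial U \times \{\epsilon\})$ and hence converges to $\partial U$ in $C^\infty$. It is worth flagging that properties \ref{enum:constzeps}, \ref{enum:bigInt}, \ref{enum:foldingRegionSmall} of $\vartheta_\epsilon$ are \emph{not} needed in this lemma; they will enter later to govern convergence of $(U_\epsilon,g_\epsilon)$ to $\Cyl(U)$ and to control the ``folding region,'' but here only the pointwise bound $\vartheta_\epsilon \geq 1$ is required.
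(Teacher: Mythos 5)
Your overall strategy is the same one Song uses for this lemma, and the central rescaling identity $A^{g_\epsilon} = \vartheta_\epsilon^{-1}A^{g}$ in the $s$-coordinate (with $\partial_s = \vartheta_\epsilon^{-1}\partial_t$ the $g_\epsilon$-unit normal) is precisely the computation underlying the statement. Your deductions of (2) and (3) from $\vartheta_\epsilon \geq 1$, together with uniform $g$-bounds on the slices near the minimal $\partial U$, are correct.

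However, your justification of conclusion (1) has a gap. You try to deduce the non-vanishing of the $g$-mean curvature of the $\gamma_\epsilon$-slices by transferring, via smooth convergence, the non-vanishing of the mean curvature of the $\Phi$-slices $\Phi(\partial U\times\{r\})$ with $r>0$. But those $\Phi$-slices accumulate on the \emph{minimal} hypersurface $\partial U$, where the mean curvature vanishes, and the $\gamma_\epsilon$-slices are not $\Phi$-slices: they are Fermi pushoffs of $\partial U_\epsilon$. Being smoothly close to a family whose mean curvature is tending to zero gives no lower bound on your own mean curvature. The non-vanishing is not derived from the limit; it is \emph{imposed} in the construction of $U_\epsilon$: the third condition in the choice of $\delta_\epsilon$ explicitly requires that for all $t\in[-\delta_\epsilon,0]$ the Fermi slice $\tilde\gamma_\epsilon(\partial\tilde U_\epsilon\times\{t\})$ has positive mean curvature with respect to $g$. (This is achievable for each fixed $\epsilon$ by continuity, since $\Phi(\partial U\times\{\epsilon\})$ has mean curvature bounded away from zero for that fixed $\epsilon$, and $\delta_\epsilon$ is then taken small enough relative to it.) Once you cite this built-in property instead of appealing to convergence, your observation that the positive factor $\vartheta_\epsilon^{-1}$ preserves both the non-vanishing and the $-\partial/\partial t$ direction correctly completes (1).
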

%
%
%
\noindent In figure \ref{fig:blowup}, we visualize the change of coordinates and one choice of $v_{\eps}(t)$.
\begin{figure}[h!]
	\centering
	\includegraphics[scale=0.3]{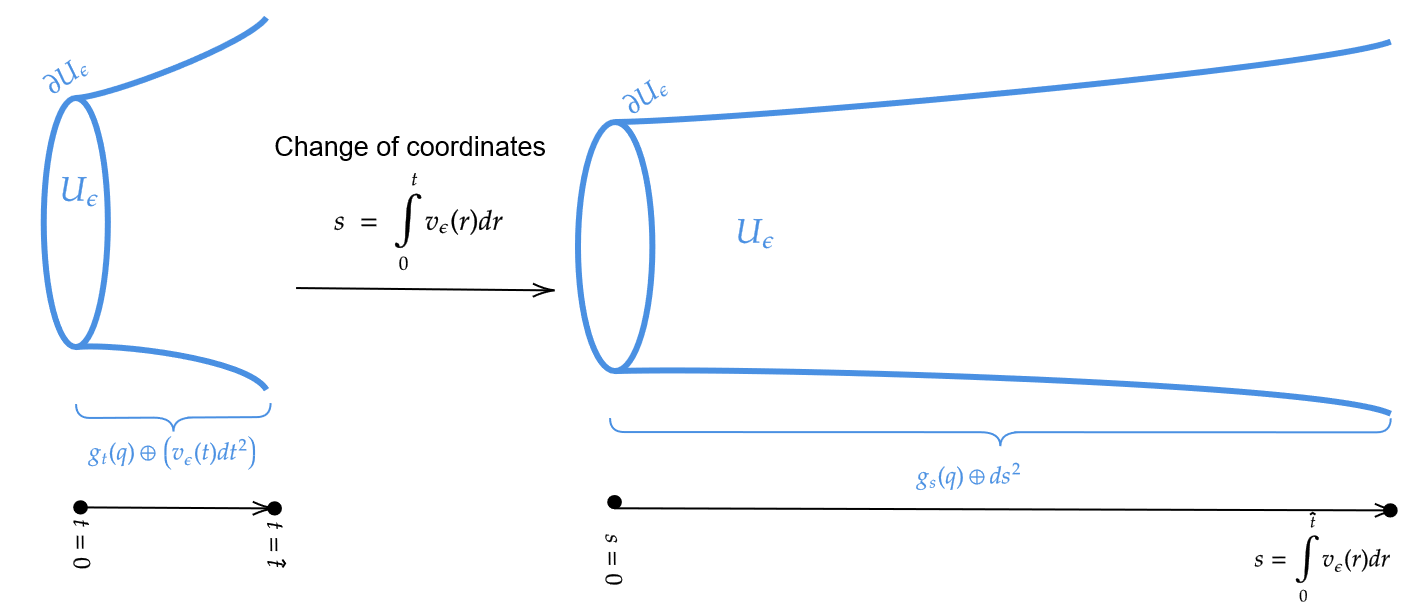}
	\caption{Change of coordinates $t \to s$ so that the contracting neighborhood approximates a cylindrical end}
	\label{fig:blowup}
\end{figure}
In figure \ref{fig:endvisualized}, we visualize a graph of the mean curvature with respect to the $s$ coordinate.
\begin{figure}[h!]
	\centering
	\includegraphics[scale=0.4]{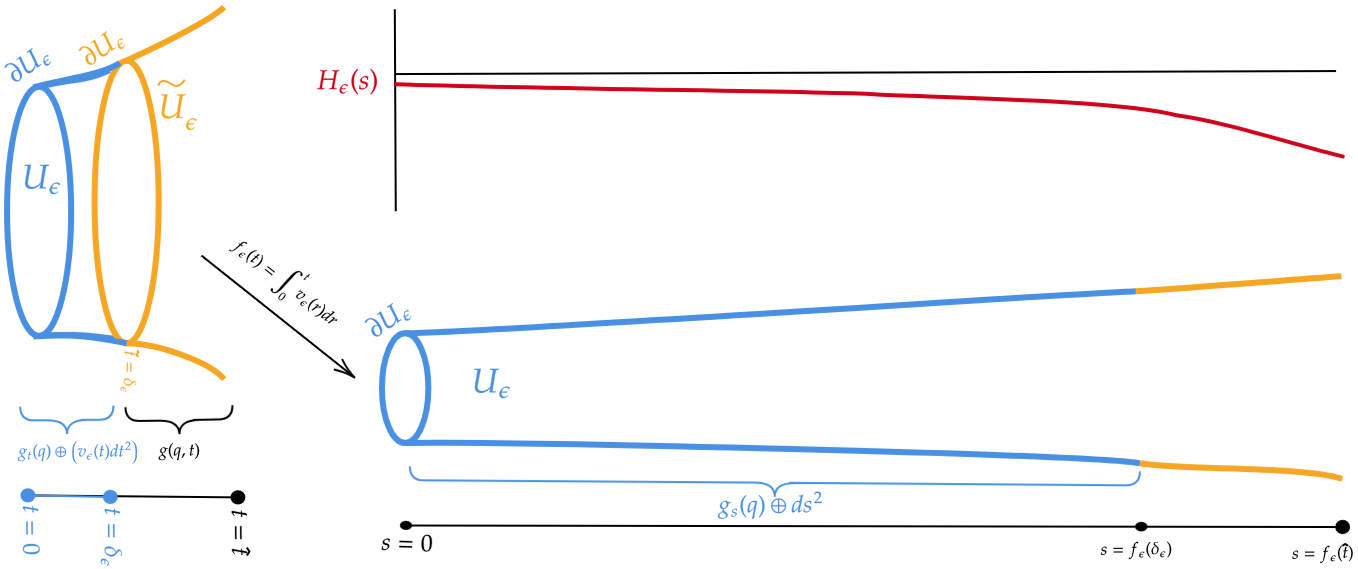}
	\caption{Visualization of the end with a graph of the mean curvature of the slices}
	\label{fig:endvisualized}
\end{figure}
%
%
\noindent We recall \cite[Lemma 6]{song2018existence} which shows that $(U_\epsilon,g_\epsilon)$ converges to the non-compact manifold with cylindrical ends, $\Cyl(U)$, and the convergence is smooth outside of a folding region where the curvature can be unbounded. 
%
%
%
\begin{lemme}[Lemma 6, \cite{song2018existence}] \label{geomconv}
The sequence of Riemannian manifolds $(U_\epsilon,g_\epsilon,q)$ converges geometrically to the non-compact manifold $(\Cyl(U),g_{\Cyl},q)$ in the $C^0$ topology. Moreover the geometric convergence is smooth outside of $\partial U\subset \Cyl(U)$ in the following sense.
\begin{enumerate}
\item Let $q\in U\backslash \partial U$. For small $\epsilon$, we have $q\in U_\epsilon\backslash\gamma_\epsilon(\partial U_\epsilon\times [0,\delta_\epsilon))$. Then as $\epsilon\to 0$,
$$(U_\epsilon\backslash\gamma_\epsilon(\partial U_\epsilon\times [0,\delta_\epsilon]), g_\epsilon,q)$$ converges geometrically to $(U\backslash \partial U,g,q)$ in the $C^\infty$ topology.
\item Fix any connected component $C$ of $\partial U$; for $\epsilon$ small we can choose a component $C_\epsilon$ of $\partial U_\epsilon$ so that $C_\epsilon$ converges to $C$ as $\epsilon\to 0$ and the following holds. Let $\epsilon_k>0$ be a sequence converging to $0$. For all $k$ let $q_k \in \gamma_\epsilon(C_{\epsilon_k}\times [0,\delta_{\epsilon_k}))$ be a point at fixed distance $d'>0$ from $\gamma_\epsilon(C_{\epsilon_k}\times \{\delta_{\epsilon_k}\})$ for the metric $g_{\epsilon_k}$, $d'$ being independent of $k$. Then 
$$\big(\gamma_{\epsilon_k}(C_{\epsilon_k}\times [0,\delta_{\epsilon_k})),g_{\epsilon_k},q_k\big)$$
subsequently converges geometrically to $(C\times (-\infty,0),g_{prod}, q_\infty)$ in the $C^\infty$ topology, where $g_{prod}$ is the product of the restriction of $g$ to $C$ and the standard metric on $(-\infty,0)$, and $q_\infty$ is a point of $C\times (-\infty,0)$ at distance $d'$ from $C\times \{0\}$.
\end{enumerate}
\end{lemme}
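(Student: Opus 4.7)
The plan is to split the analysis into the core region of $U_\epsilon$, on which $g_\epsilon \equiv g$ by construction, and the collar region $\gamma_\epsilon(\partial U_\epsilon \times [0,\delta_\epsilon])$, on which $g_\epsilon$ is twisted by $\vartheta_\epsilon$. On the collar I would introduce the rescaled normal coordinate
\[
r(t) = \int_t^{\delta_\epsilon} \vartheta_\epsilon(\tau)\, d\tau,
\]
which measures $g_\epsilon$-distance inward from the slice $\gamma_\epsilon(\partial U_\epsilon \times \{\delta_\epsilon\}) = \partial \tilde{U}_\epsilon$. In these coordinates $g_\epsilon = g_{t(r)} \oplus dr^2$, and $r$ ranges over $[0, R_\epsilon]$ with $R_\epsilon = \int_0^{\delta_\epsilon}\vartheta_\epsilon \to \infty$ by property \ref{enum:bigInt}. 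This reduction converts the collar into something that looks increasingly like a cylindrical end.

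For the first item, fix $q \in U \setminus \partial U$ and note that $d_g(q,\partial U) > 0$. Because $\delta_\epsilon \to 0$ and $\epsilon \to 0$, the whole collar $\gamma_\epsilon(\partial U_\epsilon \times [0,\delta_\epsilon])$ eventually lies in an arbitrarily small $g$-neighborhood of $\partial U$, so $q$ lies in $\tilde{U}_\epsilon$ for $\epsilon$ small, and there $g_\epsilon = g$. Since $\tilde{U}_\epsilon$ exhausts $U \setminus \partial U$, this trivially yields the pointed $C^\infty$ convergence claimed in (1). For the second item, pick $C_\epsilon \subset \partial U_\epsilon$ converging to the fixed component $C$ of $\partial U$ and use the change of variables above. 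By property \ref{enum:foldingRegionSmall}, the folding region $\{t \in [z_\epsilon, \delta_\epsilon]\}$ has $g_\epsilon$-length $\int_{z_\epsilon}^{\delta_\epsilon}\vartheta_\epsilon \to 0$, so any sequence $q_k$ at fixed $g_{\epsilon_k}$-distance $d'>0$ from $\gamma_{\epsilon_k}(C_{\epsilon_k} \times \{\delta_{\epsilon_k}\})$ eventually sits in the constant-$\vartheta_\epsilon$ region $\{t \in [0, z_\epsilon]\}$. On this subregion $\vartheta_\epsilon \equiv c_\epsilon$, so $t(r) = z_\epsilon - (r - a_\epsilon)/c_\epsilon$ for some $a_\epsilon \to 0$; and since properties \ref{enum:bigInt} and \ref{enum:foldingRegionSmall} together force $c_\epsilon z_\epsilon \to \infty$ while $z_\epsilon \to 0$, we get $t(r) \to 0$ and $\partial_r^k t \to 0$ for all $k \geq 1$ on any fixed $r$-interval. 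Smoothness of $g_t$ in $t$ then gives $g_{t(r)} \to g\llcorner C$ in $C^\infty_{\mathrm{loc}}$, yielding the pointed $C^\infty$ convergence in (2) to $(C \times (-\infty,0),\, g\llcorner C \oplus ds^2, q_\infty)$.

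Finally, the global $C^0$ geometric convergence on $\Cyl(U)$ is obtained by gluing the two local statements across the folding region. Since this region has $g_\epsilon$-diameter $\int_{z_\epsilon}^{\delta_\epsilon}\vartheta_\epsilon \to 0$, in the Gromov--Hausdorff sense it collapses to the hypersurface $\partial U \subset \Cyl(U)$ along which the core $U$ and the cylinder $\partial U \times [0,\infty)$ are identified via $\varphi$. The main technical step here, and the one I expect to require the most care, is the matching across the collapsing folding region: one must verify that the slice $\gamma_\epsilon(\partial U_\epsilon \times \{\delta_\epsilon\}) = \Phi(\partial U \times \{\epsilon\})$, viewed both as a subset of $\tilde{U}_\epsilon$ and as the $r=0$ boundary of the rescaled collar, converges to $\partial U \subset \Cyl(U)$ with the induced intrinsic metrics converging to $g\llcorner\partial U$. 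For this the uniform second-fundamental-form bound from Lemma \ref{meancurv} is crucial: it keeps the geometry of the folding region controlled independently of $\epsilon$, so that the metric distortion produced by $\vartheta_\epsilon$ is confined to the normal direction and the slice geometries stay uniformly close to the product structure. With this matching in place, the $C^0$ (pointed) geometric convergence follows by patching together the smooth convergences on the core and on the rescaled cylindrical part.
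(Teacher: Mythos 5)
The paper itself provides no proof of this lemma: it is quoted verbatim from Song, whose original proof is the single line ``These properties readily follow from the construction of $h_\epsilon$'' (which the authors have commented out). So the task is really to fill in an argument Song deems immediate, and your overall decomposition --- untouched core plus rescaled collar, with the rescaled coordinate $r(t)=\int_t^{\delta_\epsilon}\vartheta_\epsilon$ and the three facts $\int_0^{z_\epsilon}\vartheta_\epsilon\to\infty$, $\int_{z_\epsilon}^{\delta_\epsilon}\vartheta_\epsilon\to 0$, $\vartheta_\epsilon|_{[0,z_\epsilon]}\equiv\text{const}$ --- is exactly the right machinery. Your treatment of items (1) and (2) is clean and correct: in (1) the exhausting sets $\tilde U_\epsilon$ carry the unmodified metric $g$; in (2) the base points $q_k$ eventually lie in the constant-$\vartheta_\epsilon$ region because the $r$-length of the folding region tends to $0$, and on that region $t(r)$ is affine with $t'(r)=-1/c_\epsilon\to 0$, so $g_{t(r)}\oplus dr^2\to g\llcorner C\oplus dr^2$ smoothly on compacta.

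Two points in the final paragraph are off, though neither is fatal. First, you assert that the folding region has $g_\epsilon$-\emph{diameter} $\int_{z_\epsilon}^{\delta_\epsilon}\vartheta_\epsilon\to 0$. That is not true: the diameter of $\gamma_\epsilon(\partial U_\epsilon\times[z_\epsilon,\delta_\epsilon])$ is comparable to $\mathrm{diam}(\partial U_\epsilon)+\int_{z_\epsilon}^{\delta_\epsilon}\vartheta_\epsilon$, which converges to $\mathrm{diam}_g(\partial U)$, not to $0$. What collapses is the \emph{normal thickness} (the $r$-extent) of the folding region, which is exactly $\int_{z_\epsilon}^{\delta_\epsilon}\vartheta_\epsilon\to 0$; this is enough to make the folding region converge to a copy of $\partial U$ sitting at the interface between core and cylinder, but you should state it that way. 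Second, the appeal to Lemma \ref{meancurv} for ``keeping the folding region geometry controlled'' is not quite the right tool --- that lemma bounds the extrinsic geometry of the slices, not the metric tensor. The controls actually used for the $C^0$ matching are more elementary: $\vartheta_\epsilon\geq 1$ gives $g_\epsilon\geq g$ pointwise on the collar (so $g_\epsilon$-distances dominate $g$-distances), while in the rescaled chart $g_\epsilon=g_{t(r)}\oplus dr^2$ with $t(r)\in[z_\epsilon,\delta_\epsilon]\to\{0\}$, so $g_{t(r)}\to g\llcorner\partial U$ uniformly in $C^0$ directly from $\delta_\epsilon\to 0$ and the smoothness of $t\mapsto g_t$. (The quantitative $C^1$ version of this control is Lemma \ref{norm}, but that comes after this lemma in the logical order, so it cannot be invoked here --- a fact worth keeping in mind.)
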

%
%
We also recall \cite[Lemma 7]{song2018existence}, which  describes how the folding region of $(U_\epsilon,g_\epsilon)$ is controlled in the $C^1$ topology by $g$. In particular, we emphasize the existence of charts near the folding region of $U_{\eps}$, such that the metrics, $\{g_{\eps}\}$, remain bounded in the $C^1$ topology \textit{despite} converging to a metric which is only lipschitz. We use $\llcorner$ to denote the restriction of a metric $g$ to a submanifold. 
\begin{lemme}[Lemma 7, \cite{song2018existence}] \label{norm}
For any $\frak{d}_1\in(0,\hat{d})$, there exists $\eta>0$ 
such that for all $\epsilon>0$ small, there is an embbeding $\theta_\epsilon: \partial U\times [-\frak{d}_1,\frak{d}_1]\to U_\epsilon$ satisfying the following properties: 
\begin{enumerate}
\item $\theta_\epsilon(\partial  U\times \{0\}) = \gamma_\epsilon(\partial U_\epsilon\times\{\delta_\epsilon\})$ and 
$$\theta_\epsilon(\partial U\times [-\frak{d}_1,\frak{d}_1]) = \{q\in U_\epsilon; \quad d_{h_\epsilon}\big(q,\gamma_\epsilon(\partial U_\epsilon\times\{\delta_\epsilon\})\big)\leq \frak{d}_1\},$$
\item  $\|\theta_\epsilon^*g_\epsilon \|_{C^1(\partial U\times [-\frak{d}_1,\frak{d}_1])} \leq \eta$ where $\|.\|_{C^1(\partial U\times [-\frak{d}_1,\frak{d}_1])}$ is computed with the product metric $h':=g\llcorner \partial U  \oplus ds^2$,
\item the metrics $\theta_\epsilon^*g_\epsilon$ converge in the $C^0$ topology to a Lipschitz continuous metric which is smooth outside of $\partial U\times\{0\}\subset\partial U\times [-\frak{d}_1,\frak{d}_1]$ and 
$$\lim_{\epsilon \to 0}\|\theta_\epsilon^*g_\epsilon\llcorner (\partial U\times [0,\frak{d}_1]) - \gamma_0^*g\llcorner (\partial U\times [0,\frak{d}_1]) \|_{C^0({\partial U\times [0,\frak{d}_1]})}=0$$
where $\|.\|_{C^0(\partial U\times [0,\frak{d}_1])}$ is computed with $h'$.
\end{enumerate}
\end{lemme}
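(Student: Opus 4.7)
The plan is to build $\theta_\epsilon$ by gluing two natural charts along $\partial U \times \{0\}$, identified with $\gamma_\epsilon(\partial U_\epsilon \times \{\delta_\epsilon\}) = \Phi(\partial U \times \{\epsilon\})$ via $\Phi(\cdot,\epsilon)$; write $\psi_\epsilon := \gamma_\epsilon(\cdot,\delta_\epsilon)^{-1} \circ \Phi(\cdot,\epsilon) : \partial U \to \partial U_\epsilon$ for the induced identification. On the outer slab $s \in [0,\mathfrak{d}_1]$, which lies outside the folding region (where $g_\epsilon = g$), I will set $\theta_\epsilon(x,s) := \gamma_\epsilon(\psi_\epsilon(x),\delta_\epsilon + s)$. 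This piece is just Fermi coordinates of $g$ from $\partial \tilde{U}_\epsilon$; since $\Phi(\cdot,\epsilon) \to \mathrm{id}_{\partial U}$ smoothly, $\theta_\epsilon^* g$ converges in $C^\infty$ to $\gamma_0^* g$, which immediately delivers both the uniform $C^1$ bound and the strong $C^0$ convergence claim on this side.

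On the inner slab $s \in [-\mathfrak{d}_1,0]$ the key step is to reparameterize the $t$-variable by $g_\epsilon$-arclength from $\partial \tilde{U}_\epsilon$: let $t_\epsilon(s) \in [0,\delta_\epsilon]$ be the unique value with $\int_{t_\epsilon(s)}^{\delta_\epsilon} \vartheta_\epsilon(\tau)\,d\tau = -s$, which is well-defined on $[-\mathfrak{d}_1,0]$ for $\epsilon$ small by property \ref{enum:bigInt}. Setting $\theta_\epsilon(x,s) := \gamma_\epsilon(\psi_\epsilon(x),t_\epsilon(s))$, the definition \eqref{modifiedMetric} of $g_\epsilon$ reduces in these coordinates to $g_{t_\epsilon(s)} \oplus ds^2$, so $\theta_\epsilon^* g_\epsilon = G_s^\epsilon \oplus ds^2$ where $G_s^\epsilon := \psi_\epsilon^* g_{t_\epsilon(s)}$. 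Because $\vartheta_\epsilon \equiv 1$ in a neighborhood of $t = \delta_\epsilon$, one has $t_\epsilon(s) = \delta_\epsilon + s$ near $s = 0$, so the two charts join smoothly and $\theta_\epsilon$ is a genuine embedding satisfying (i).

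The $C^1$ bound (ii) then only needs attention on the inner slab: tangential derivatives $\partial_x G_s^\epsilon$ are uniformly bounded because $\{g_t\}$ is a smooth family on a fixed neighborhood of $\partial U$, while the normal derivative computes to $\partial_s G_s^\epsilon = (\partial_t g_t)|_{t_\epsilon(s)}/\vartheta_\epsilon(t_\epsilon(s))$, bounded uniformly in $\epsilon$ since $\vartheta_\epsilon \geq 1$ by property \ref{enum:vepsProps}. For the $C^0$ limit (iii), $\delta_\epsilon \to 0$ forces $t_\epsilon(s) \to 0$ uniformly in $s \in [-\mathfrak{d}_1,0]$, whence $G_s^\epsilon \to g \llcorner \partial U$ uniformly; combined with the smooth convergence to $\gamma_0^* g$ on the outer side, the limit metric is continuous across $s=0$ but only Lipschitz there, the discrepancy in one-sided $s$-derivatives encoding the (possibly nonvanishing) second fundamental form of $\partial U$ in $(U,g)$. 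The main subtlety lives in the inner slab: $t_\epsilon$ sweeps the folding region, which shrinks to $\{0\}$ in $t$ but is stretched to fixed length in $s$ by the possibly blowing-up factor $\vartheta_\epsilon$---property \ref{enum:vepsProps} is exactly what turns that stretching into a bounded damping of $s$-derivatives in the pulled-back metric, while property \ref{enum:bigInt} ensures the $s$-length suffices to cover $[-\mathfrak{d}_1,0]$.
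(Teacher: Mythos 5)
Your construction is correct and, once unwound, coincides with Song's original proof of this lemma: the map $\theta_\epsilon$ you build (outer chart via $g$-Fermi coordinates, inner chart via arclength reparametrization of $t$) is exactly the normal exponential chart of $g_\epsilon$ emanating from the slice $\gamma_\epsilon(\partial U_\epsilon\times\{\delta_\epsilon\})$, which is how Song defines it directly via $\exp^{g_\epsilon}$ together with some identification $\phi_\epsilon\colon\partial U\to\partial U_\epsilon$ of the boundary factors. What you add is the explicit computation: Song merely remarks that (ii) follows from $\vartheta_\epsilon\geq 1$ and (iii) from $g_\epsilon=g$ outside the folding region, whereas your reparametrization $s\mapsto t_\epsilon(s)$ gives the clean identity $\theta_\epsilon^*g_\epsilon = G^\epsilon_s\oplus ds^2$ with $\partial_s G^\epsilon_s=(\partial_t g_t)|_{t_\epsilon(s)}/\vartheta_\epsilon(t_\epsilon(s))$, from which both (ii) and (iii) are immediate. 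Two small points you handle correctly and which are worth flagging: the smooth gluing across $s=0$ genuinely relies on $\vartheta_\epsilon\equiv 1$ near $t=\delta_\epsilon$ (property (2) of $\vartheta_\epsilon$), so that the inner chart reduces to $\gamma_\epsilon(\psi_\epsilon(x),\delta_\epsilon+s)$ near $s=0^-$ and matches the outer one; and your identification of the Lipschitz kink of the limit metric at $s=0$ with the second fundamental form of $\partial U$ is accurate --- minimality of $\partial U$ only forces $\operatorname{tr}A=0$, not $A=0$, so the limit is $C^1$ across $s=0$ precisely when $\partial U$ is totally geodesic.
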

\noindent Having constructed the cylindrical end, we recall the linear growth of $\omega_p(\Cyl(U), h)$ and the convergence of $\omega_p(U_{\eps}, h_{\eps}) \to \omega_p(\Cyl(U), h)$. In the below, we decompose $\Sigma$ into its connected components, i.e. $\Sigma  = \sqcup_{i=1}^m \Sigma_i$,
%
\begin{theorem}[Thm 9 \cite{song2018existence} ] \label{CylindricalWeylThm}
Let $(C, h)$ be an $n+1$-dimensional connected non-compact manifold with a finite number of cylindrical ends, each of which is isometric to a product metric, $(\Sigma_i \times [0, \infty), h_i \oplus dt^2)$. Suppose that $\Sigma_1$ has the largest $n$-volume among $\{\Sigma_i\}_{i=1}^m$, then %
\begin{enumerate}
    \item $\forall p \in \N^+$,
    \[
    \omega_{p+1}(C) - \omega_p(C) \geq A(\Sigma_1)
    \]
    \item There exists $\hat{C} > 0$ such that for all $p$,
    \[
    p \cdot A(\Sigma_1) \leq \omega_p(C) \leq p \cdot A(\Sigma_1) + \hat{C} p^{1/(n+1)}
    \]
\end{enumerate}
\end{theorem}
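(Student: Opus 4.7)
The plan is to prove the upper bound $\omega_p(C) \leq p \cdot A(\Sigma_1) + \hat C p^{1/(n+1)}$ by an explicit sweepout construction exploiting the cylindrical structure, and to establish the gap inequality $\omega_{p+1}(C) - \omega_p(C) \geq A(\Sigma_1)$ via a slicing-translation argument along the largest end $E = \Sigma_1 \times [0,\infty)$. The lower bound $\omega_p(C) \geq p \cdot A(\Sigma_1)$ then follows by iterating the gap inequality from the base case $\omega_1(C) \geq A(\Sigma_1)$.

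For the upper bound, I would split $p = q + r$ with $r \sim p^{1/(n+1)}$. Fix a compact core $K \subset C$ and apply the Weyl law of Liokumovich--Marques--Neves--Gromov--Guth \cite{LiokumovichMarquesNeves, Guth, Gromov} to produce an $r$-sweepout of $K$ of supremum mass at most $C_1 r^{1/(n+1)}$. On the cylindrical end, consider the $1$-parameter family of slices $t \mapsto \llbracket \Sigma_1 \times \{t\}\rrbracket$ for $t \in [0, q]$, each of mass exactly $A(\Sigma_1)$. Using a suspension-type construction analogous to Dey's (recalled in \S \ref{DeySuspension}) one promotes this $1$-parameter family of relative cycles into a $q$-sweepout of $E$ with supremum mass at most $q \cdot A(\Sigma_1)$. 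Joining the two sweepouts via a standard concatenation produces a $p$-sweepout of $C$ whose maximal mass is $\leq q \cdot A(\Sigma_1) + C_1 r^{1/(n+1)} \leq p \cdot A(\Sigma_1) + \hat C p^{1/(n+1)}$, as desired.

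For the gap inequality, let $\Phi : X \to \ZZ_n(C;\Z_2)$ be a near-optimal $(p+1)$-sweepout with $\sup_x \mathbf{M}(\Phi(x)) < \omega_{p+1}(C) + \eta$. Use the isometric translations $\tau_L(x,t) = (x, t+L)$ on $E$ to push the supports of cycles deep into the end, interpolating in a thin collar at extra mass cost $o_L(1)$ as $L \to \infty$. For a generic slicing level $s \in (0, L)$, the slicing theorem gives a decomposition $\Phi_L(x) = \Phi_L(x)^- + \Phi_L(x)^+$ along $\Sigma_1 \times \{s\}$ with additive mass. A Lusternik--Schnirelman-style cup-product argument using $\Phi^*(\bar\lambda^{p+1}) \neq 0$ shows that $x \mapsto \Phi_L(x)^-$ is itself a $p$-sweepout of $C$, while for some $x_0 \in X$ the complementary piece $\Phi_L(x_0)^+$ must contain at least one full copy of $\Sigma_1 \times \{s'\}$, contributing an additional $A(\Sigma_1)$ in mass. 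Sending $\eta \to 0$ then yields the gap bound.

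The main obstacle will be making the cohomological splitting rigorous in the relative-cycles framework: one must verify that $(\Phi_L^-)^*(\bar\lambda^p) \neq 0$ after truncation, and that the truncated family remains in $\mathcal{P}_p$ (in particular retains no concentration of mass), both of which require a careful interpolation and a suitable choice of $L$. The base case $\omega_1(C) \geq A(\Sigma_1)$ itself follows because any nontrivial loop in $\ZZ_n(C;\Z_2)$ based at the zero cycle must, by the covering-space description of Lemma \ref{topology_znr} adapted to the cylindrical setting, cross a slice $\Sigma_1 \times \{s\}$ whose mass is $A(\Sigma_1)$; iterating the gap bound then yields $\omega_p(C) \geq p \cdot A(\Sigma_1)$ by induction on $p$.
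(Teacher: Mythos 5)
Your overall strategy — slices in the cylinder, coarea/Crofton inequality, a Lusternik--Schnirelman cup-length bound, and the Weyl law for the compact core — matches the spirit of Song's proof, but the gap argument as written has a structural flaw. You assert that from $\Phi^*(\bar\lambda^{p+1}) \neq 0$ one concludes \emph{both} that $\Phi_L^-$ is a $p$-sweepout \emph{and} that some $\Phi_L(x_0)^+$ carries an extra $A(\Sigma_1)$ of mass, and then add these to get the gap. But the cup-product lemma for an open cover $X = A \cup B$ delivers an \emph{either/or}: $\Phi^*(\bar\lambda^p)|_A \neq 0$ or $\Phi^*(\bar\lambda)|_B \neq 0$. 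Even if both happened to hold, the $x$ realizing $\sup_x \mathbf{M}(\Phi_L(x)^-) \geq \omega_p$ and the $x_0$ carrying the extra $A(\Sigma_1)$ need not be the same point, so $\sup_x \mathbf{M}(\Phi(x)) \geq \omega_p + A(\Sigma_1)$ does not follow. Song's argument instead proves $\omega_p(C) \geq p\cdot A(\Sigma_1)$ in one shot via cup-length: one considers the sets $U_j = \{x : \Phi(x) \text{ does not separate the slab around } \Sigma_1\times\{j\}\}$, $j=1,\ldots,p$, shows $\Phi^*(\bar\lambda)|_{U_j} = 0$, and concludes (since $\Phi^*(\bar\lambda)^p \neq 0$) that the $U_j$ cannot cover $X$; the resulting $x$ has a \emph{single} cycle which separates all $p$ disjoint unit segments, and the coarea inequality applied segment-by-segment (projecting onto $\Sigma_1$; every fiber $\{q\}\times[j,j+1]$ must meet the cycle) yields $\mathbf{M}(\Phi(x)) \geq p\,A(\Sigma_1)$. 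Note also that the gap (1) is strictly stronger than what you get by combining the two inequalities in (2), so it cannot be reduced to them.

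Several further points need care. The isometric translation $\tau_L$ is only defined on $E$, not on $K$; cycles in a sweepout of $C$ will have support inside the core, and ``pushing supports deep into the end'' followed by ``interpolation at extra mass cost $o_L(1)$'' is not a well-defined operation. Song does not translate; he works directly with truncations $C_L$. Your phrasing that $\Phi_L(x_0)^+$ ``must contain at least one full copy of $\Sigma_1\times\{s'\}$'' and that a nontrivial loop ``must cross a slice whose mass is $A(\Sigma_1)$'' both conflate a topological statement with the needed metric estimate: the correct statement is that a cycle separating a slab of $\Sigma_1 \times \R$ has mass $\geq A(\Sigma_1)$ because its projection to $\Sigma_1$ is onto (Crofton/coarea), not that it literally contains a slice. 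Slicing $\Phi_L(x)$ at level $s$ produces pieces with nonempty boundary, not cycles; closing them up requires capping with a portion of the slice, done continuously in $x$, which is itself a technical step. Finally, for the upper bound, Dey's suspension is the wrong tool (it is built around the $\mathcal{A}^h$ functional and combines a $p$-sweepout with a $1$-sweepout); the right construction is the Gromov--Guth polynomial $q$-sweepout of the long cylinder by level sets $\partial\{a_0 + a_1 t + \cdots + a_q t^q \leq 0\}$, joined to a Weyl-law $r$-sweepout of a fixed compact piece via the bend-and-cancel gluing used by Guth and Liokumovich--Marques--Neves.
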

\begin{remark} \label{ConvergenceOfWidths}
For each $p$, $\omega_p(U_{\eps}, g_{\eps}) \xrightarrow{\eps \to 0} \omega_p(C, g_{\Cyl} = h \oplus dt^2)$ (see \cite[Thm 10, Step 2]{song2018existence}). 
\end{remark}
%
%
\subsubsection{Bounds on $W_0(U_{\eps}, g_{\eps})$} \label{APWidthBoundSection}
%
Let $\eta>0$ arbitrary and choose $t_1 \in (0,\hat t)$ such that the manifold with smooth boundary $M_1 = M \setminus \Phi(\Sigma \times [0,t_1))$ satisfies    

\[
\sup_{t \in [0,t_1]}|A(\Sigma) - A(\Phi(\Sigma\times\{t\}))|<\eta \qquad \text{and}\qquad |W_0(M,g) - W_0(M_1,g)|<\eta.
\]
Note that the latter condition follows from the continuity of $W_0$ with respect to the metric and the fact that $M$ and $M_1$ are diffeomorphic via a map $\Phi_{t_1}: M \to M_1$ such that $||\Phi_{t_1}^*(g) - g||_{C^1}$ can be made arbitrarily small as $t_1 \to 0$. Recall \cite[Remark 5]{song2018existence} that $\partial U_\epsilon$ has a neighborhood in $U_\epsilon$ that is foliated by the hypersurfaces
\[
\{\Phi(\partial U\times \{t\})\}_{t \in [\epsilon, \hat t]} \cup \{\gamma_\epsilon(\partial U_\epsilon\times\{t\})\}_{t\in [0,\delta_\epsilon]}
\]
and that these satisfy $\Phi(\partial U \times \{\epsilon\} ) = \gamma_\epsilon (\partial U_\epsilon\times \{\delta_\epsilon\})$, and  $U_\epsilon \setminus \left(\,\gamma_\epsilon(\partial U_\epsilon \times [0,\delta_\epsilon) \cup \Phi(\partial U \times [\epsilon,t_1))\,\right) = M_1$. 

Let $\Xi \colon [0,1] \to \mathcal{C}(M,g)$ be a flat-continuous map with $\Xi(0) = M$, $\Xi(1)=\emptyset$, and such that $\partial \circ \Xi \colon [0,1] \to \mathcal{Z}_{n,rel}(M,\partial M;\Z_2)$ has no concentration of mass and

\[
\sup_{t \in [0,1]} \mathbf{M}(\partial\Xi(t)) \leq W_0(M,g) +\eta,
\]
where the mass is computed with respect to the original metric $g$. We can use $\Xi$ to construct a sweepout of $(U_\epsilon,g_\epsilon)$ by restricting $\Xi(t)$ to $M_1$ and then using the foliation of $\partial U_\eps$ to complete the sweepout of $U_{\eps}$. More precisely, we define $\tilde \Xi \colon [\epsilon-t_1-\delta_\epsilon,1] \to \mathcal{C}(U_\epsilon,g_\epsilon)$ by

\[
\tilde\Xi(t) = \left\{\begin{array}{rl} U_\epsilon\setminus (\, \gamma_\epsilon(\partial U_\epsilon \times [0,t-\epsilon+t_1+\delta_\epsilon)\,)\,) , & t \in [\epsilon-t_1-\delta_\epsilon,\epsilon -t_1], \\[5pt] U_\epsilon\setminus (\, \gamma_\epsilon(\partial U_\epsilon \times [0,\delta_\epsilon)\cup\Phi(\partial U\times[\epsilon,t+t_1))\,)\,),& t\in[\epsilon-t_1,0],\\[5pt] \Xi(t) \cap M_1, & t\in[0,1].  \end{array} \right.
\]
Since $\Xi(1)=M \supset M_1$ and the domains above depend continuously on $t$ in the flat metric, the map $\tilde \Xi$ is continuous. By a direct computation,
\[
\mathbf{M}(\partial \tilde\Xi(t)) = \left\{\begin{array}{rl} \mathbf{M}(\,\gamma_\epsilon(\partial U_\epsilon \times \{t-\epsilon+t_1+\delta_\epsilon\})\,), & t \in [\epsilon-t_1-\delta_\epsilon,\epsilon -t_1], \\[5pt] \mathbf{M}(\,\Phi(\Sigma \times \{t+t_1\})\,),& t\in[\epsilon-t_1,0],\\[5pt] P_{(U_\epsilon,g_\epsilon)}( \Xi(t) \cap  M_1), & t\in[0,1],  \end{array} \right.
\]
where $P_{(U_\eps,g_\eps)}$ denotes the perimeter functional in $(U_\eps,g_\eps)$. Recalling that $g_\epsilon=g$ on $U_\epsilon \setminus \gamma_\epsilon(\partial U_\epsilon \times [0,\delta_\epsilon]) \supset (\partial M_1) \cup \mathrm{supp}(\partial \Xi(t))$  and \cite[Proposition 4.1]{DeyCMCs}, we see that $\partial\circ \tilde \Xi$ has no concentration of mass, and estimate the mass of $\partial \tilde\Xi(t)$ with respect to $g_\epsilon$ by:
\[
\sup_{t \in [\epsilon-t_1-\delta_\epsilon,1]}\mathbf{M}(\partial \tilde\Xi(t)) \leq \max \left\{\sup_{s \in [0,\delta_\epsilon]}A_{g_\epsilon}(\gamma_\epsilon(\partial U_\epsilon \times\{s\})),\sup_{s \in [\epsilon,t_1]}A_{g}(\Phi(\Sigma\times\{s\})), W_0(M,g)+ A(\Sigma) + 2\eta \right\}
\]
The second supremum is bounded above by $A(\Sigma) + \eta$, by the choice of $t_1$. Moreover, using the geometric convergence of $(U_\epsilon,g_\epsilon)$ to $\mathrm{Cyl}(M)$ (see Lemma \ref{geomconv}, part 2), the first supremum can be bounded from above by $A(\Sigma) + o_\epsilon(1)$, where $A(\Sigma)=\mathrm{Area}_g(\Sigma)$. Therefore,
\[
\sup_{t \in [\epsilon-t_1-\delta_\epsilon,1]}\mathbf{M}(\partial \tilde\Xi(t))  \leq W_0(M,g) + A(\Sigma) + 2\eta + o_\epsilon(1).
\]
Since $\tilde \Xi(\epsilon-t_1-\delta_\epsilon) =U_\epsilon$ and $\tilde \Xi(1) = \Xi(1)\cap M_1=\emptyset$, we conclude that

\[
W_0(U_\epsilon,g_\epsilon) \leq \sup_t\mathbf{M}(\partial\tilde\Xi(t))\leq W_0(M,g) + A(\Sigma) + 2\eta + o_\epsilon(1)
\]
for sufficiently small $\epsilon$, and thus
\begin{equation} \label{W0epsUpperBound}
\limsup_{\epsilon \to 0^+} W_0(U_\epsilon,g_\epsilon) \leq  W_0(M,g) + A(\Sigma).
\end{equation}
%
\subsection{Approximating sequences $h_{\eps} \to h$}
Having constructed the approximating manifolds $(U_{\eps}, g_{\eps})$ to $\Cyl(M)$, we consider a sequence of approximating functions $h_{\eps}: U_{\eps} \to \R$. Recall that we assume that $h$ satisfies Assumption \ref{hCompactSupport}.

%
For $N = U_{\eps}$ in the above, there exists a sequence of functions of approximating functions, $\{h_{\eps}\}$, such that 
\begin{enumerate}
\item \label{hEpsSMall} $\|h_{\eps}\|_{L^1(U_{\eps})} < A(\Sigma_1)/2$, $\|h_{\eps}\|_{L^{\infty}(U_{\eps})} \leq 2 \|h\|_{L^{\infty}(M)}$
\item \label{hEpsGood} $h_{\eps} \in \mc S(U_{\eps}, g_{\eps})$ 
\item \label{endBehavior} $h_{\eps} \to 0$ in $C^{1}(\gamma_{\eps}(\p U_{\eps} \times [0, \delta_{\eps}]))$ and $h_{\eps} \rightarrow h$ in $C^1_{loc}(M\setminus \Sigma)$ as $\eps \to 0$ for all $\eps$ sufficiently small. 
\item \label{integralCondition} $\int_{U_{\eps}} h_{\eps} \geq 0$
\end{enumerate}
%
%
Note that such an $h_{\eps}$ always exists, as $h \in C^1_c(M)$ implies that $\gamma_{\eps}(\partial U_{\eps} \times [0, \delta_{\eps}]) \subseteq \{h = 0\}$ for all $\eps$ sufficiently small. Thus conditions \ref{hEpsGood} and \ref{endBehavior} follow from the density of good functions, $\mathcal{S}(U_\eps,g_{\eps})$, in order to take a sufficiently small perturbation of $h$. Note that conditions \ref{hEpsSMall} and \ref{integralCondition} are also possible due to the density of good functions, as well as the original conditions on $h$, \ref{LOneBound} and \ref{compactPositiveIntegral}.

\subsection{Dey's Suspension Construction}  \label{DeySuspension}
In this section, we briefly recall the topological suspension construction of Dey \cite[2.4]{DeyCMCs}. \nl 
\indent Let $A$ be a topological space and define the \textit{cone over $A$} as
\[
CA = \frac{A \times [0,1]}{\sim}
\]
where `$\sim$' collapes $A \times \{1\}$ to a point, i.e.
\[
(x,1) \sim (y,1) \qquad \forall x,y \in A
\]
The \textit{suspension of $A$} is given by 
\[
SA = \frac{A \times [-1,1]}{\sim}
\]
where $\sim$ collapses $A \times \{1\}$ to a point and $A \times \{-1\}$ to another point, so that $SA = C_+ A \cup C_- A$. We note that if $A$ is path-connected, then $SA$ is simply connected, and hence if $A$ is a cubical complex then so are $CA$ and $SA$. \nl  
\indent Let $h_{\eps} \in \mathcal{S}(g_{\eps})$ as imposed by condition \ref{hEpsGood}. We want to apply theorem \ref{thm:index bound for all g} to the homotopy class associated to the following map constructed in \cite[equation 4]{DeyCMCs}. We recall this construction as follows, reproducing \cite[\S 4]{DeyCMCs}, Parts 1--4 with brevity, in the setting of manifolds with boundary.\nl 
\indent Fix $\delta > 0$. There exists a $p$-sweepout $\Phi: X \to \Znr(U_\eps, \partial U_\eps; \Z_2)$ with no concentration of mass and $X$ connected so that 
\begin{equation} \label{initialSweepout}
\sup_{x \in X} \{\bM(\Phi(x))\} \leq \omega_p + \delta
\end{equation}
by nature of being a sweepout, we have that 
\[
(\Phi)_*: \pi_1(X) \rightarrow \pi_1(\Znr(U_\eps, \partial U_\eps; \Z_2)) = \Z_2
\]
is onto. Thus, we can lift $\Phi$ to a map $\tPhi: \tX \to \CC(U_\eps)$ with $\rho: \tX \to X$ the double cover projection map, and $\p \circ \tPhi = \Phi \circ \rho$, where we omit the quotient map onto relative cycles. Let $T: \tX \to \tX$ the corresponding deck transformation with 
\begin{equation} \label{tPhiSym}
\tPhi(T(x)) = U_\eps - \tPhi(x) \qquad \forall x \in \tX
\end{equation}
Finally, let $\Lambda: [0,1] \to \CC(U_\eps)$ be a $1$-sweepout with $\Lambda(0) = U_\eps$, $\Lambda(1) = \emptyset$ and $\partial \circ \Lambda: [0,1] \to \ZZ_n(U_\eps, \partial U_\eps; \Z_2)$ having no concentration mass and 
\[
\sup_{t \in [0,1]} (\bM(\p \Lambda(t))) \leq W_0(U_\eps,g_\eps) + \delta,
\]
Define the subset 
\[
\tilde{Z} = \{(x,t) \in S \tX \; : \; |t| \leq 1/2\} 
\]
As in \cite[ \S 4, part 2]{DeyCMCs}, we can consider the following map on $S \tX$ 
\begin{align*} 
\tPsi &: S \tX \rightarrow \CC(U_\eps) \\
\tilde{\Psi}(x, t) &=\begin{cases}
\tilde{\Phi}(x) & (x,t) \in \tilde{Z} \\
(\tilde{\Phi}(x) \cap \Lambda(2t-1)) & (x,t) \in C_+ \backslash \tilde{Z} \\
U_\eps - (\tilde{\Phi}(x) \cap \Lambda(-1-2t)) & (x,t) \in C_- \backslash \tilde{Z}
\end{cases}
\end{align*}
We then have the following directly from \cite{DeyCMCs} adapted to the setting of manifolds with boundary 
%
\begin{lemma}[Claim 4.2, \cite{DeyCMCs}]
The map $\tilde{\Psi}: S \tX \to \CC(U_\eps)$ is continuous in the flat topology. 
\end{lemma}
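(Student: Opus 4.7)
The plan is to check $\mathcal F$-continuity of $\tPsi$ piecewise on the three closed regions $\tilde Z$, $C_+\setminus\mathrm{int}(\tilde Z)$, and $C_-\setminus\mathrm{int}(\tilde Z)$ that cover $S\tX$, and then verify agreement along the overlapping interfaces $\{|t|=1/2\}$ and at the collapsed cone points $t=\pm 1$.

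Interior continuity rests on three ingredients. First, $\tPhi\colon \tX\to \CC(U_\eps)$ is $\mathcal F$-continuous, being the unique lift of the $\mathcal F$-continuous $p$-sweepout $\Phi$ to the double cover produced in Lemma \ref{topology_znr}. Second, $\Lambda$ is $\mathcal F$-continuous by construction. Third, the two operations appearing in the defining formulas---intersection $(\Omega,\Omega')\mapsto \Omega\cap\Omega'$ and complementation $\Omega \mapsto U_\eps - \Omega$---are $\mathcal F$-continuous on $\CC(U_\eps)$. Complementation is in fact an $\mathcal F$-isometry, while intersection obeys $\mathcal F(\Omega_1\cap\Omega_2,\Omega_1'\cap\Omega_2')\le \mathcal F(\Omega_1,\Omega_1')+\mathcal F(\Omega_2,\Omega_2')$, which in turn follows from the pointwise inclusion $(\Omega_1\cap\Omega_2)\triangle(\Omega_1'\cap\Omega_2')\subseteq (\Omega_1\triangle\Omega_1')\cup(\Omega_2\triangle\Omega_2')$. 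Composing these yields $\mathcal F$-continuity of each branch of $\tPsi$ on its interior.

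Matching at the seam $\{t=1/2\}$ is immediate: since $\Lambda(0)=U_\eps$, both formulas evaluate to $\tPhi(x)\cap U_\eps=\tPhi(x)$. At the collapsed cone point $t=1$ the value $\tPhi(x)\cap\Lambda(1)=\emptyset$ is independent of $x$, so $\tPsi$ descends to the quotient; $\mathcal F$-continuity as $t\to 1$ then follows from $\mathbf M(\Lambda(s))\to 0$ together with $\mathbf M(\tPhi(x)\cap\Lambda(s))\le \mathbf M(\Lambda(s))$. The check at $t=-1$ is analogous, yielding the constant value $U_\eps$.

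The genuine subtlety is the seam $\{t=-1/2\}$: the $\tilde Z$ branch gives $\tPhi(x)$ while the $C_-\setminus\tilde Z$ branch gives $U_\eps-(\tPhi(x)\cap\Lambda(0))=U_\eps-\tPhi(x)$, and these reconcile only through the deck-equivariance $\tPhi(T(x))=U_\eps-\tPhi(x)$ from \eqref{tPhiSym}. I expect this to be handled by carefully tracking the antipodal identification in Dey's suspension construction \cite{DeyCMCs} so that opposite points on the $C_\pm$ hemispheres are matched via $T$; correctly encoding this equivariance across the $C_-$ seam is the principal obstacle, and once it is in place the remaining pieces glue together as sketched.
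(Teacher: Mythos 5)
Your piecewise strategy is the right one, and your observations about the $\mathcal F$-continuity of $\tilde\Phi$ (via unique path lifting through the covering in Lemma \ref{topology_znr}), of $\Lambda$, and of the set operations (complementation is an $\mathcal F$-isometry, intersection is $\mathcal F$-Lipschitz via the symmetric-difference bound) are all correct, as are the checks at $t=1/2$ and at the cone points $t=\pm 1$. You also correctly detected that, as literally printed, the two branches of $\tilde\Psi$ disagree at $t=-1/2$: the $\tilde Z$ branch gives $\tilde\Phi(x)$ while the $C_-$ branch gives $U_\epsilon - \tilde\Phi(x)$.

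However, your proposed resolution is a genuine gap, and the direction you gesture at cannot close it. Deck equivariance $\tilde\Phi(T(x)) = U_\epsilon - \tilde\Phi(x)$ relates the value of a single map at $(x,t)$ to its value at the \emph{different} point $(T(x),-t)$; it cannot reconcile two different branch formulas evaluated at the \emph{same} point $(x,-1/2)$, and ``matching $C_\pm$ hemispheres via $T$'' would change the topology of $S\tilde X$, which is not the intent. The real issue is that the $C_-$ branch, as transcribed in this paper, contains a misprint: it should read $U_\epsilon - \bigl((U_\epsilon - \tilde\Phi(x)) \cap \Lambda(-1-2t)\bigr) = \tilde\Phi(x) \cup \bigl(U_\epsilon \setminus \Lambda(-1-2t)\bigr)$. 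With this correction the limit at $t = -1/2^-$ is $U_\epsilon - \bigl((U_\epsilon - \tilde\Phi(x)) \cap U_\epsilon\bigr) = \tilde\Phi(x)$, matching the $\tilde Z$ branch; the limit at $t=-1^+$ is $U_\epsilon - \emptyset = U_\epsilon$, constant in $x$; and the required equivariance $\tilde\Psi(T(x),-t) = U_\epsilon - \tilde\Psi(x,t)$, hence $\partial\tilde\Psi(x,t) = \partial\tilde\Psi(T(x),-t)$ in $\mathcal Z_{n,\mathrm{rel}}$, holds on all branches. (Note that this equivariance also fails with the uncorrected formula: on $C_+\setminus\tilde Z$ one would need $\partial(A\cap B)=\partial(A^c\cap B)$ for arbitrary $A=\tilde\Phi(x)$, $B=\Lambda(s)$, which is false since the difference is $\partial B$.) Once the formula is corrected, the remainder of your argument goes through as sketched and reproduces the paper's proof.
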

%
%
\noindent Noting the $\Z_2$ action on $S \tX$ given by $(x,t) \mapsto (T(x), -t)$ along with the fact that $\partial \tPsi(x,t) = \p \tPsi(T(x), -t)$, we can define 
\[
Y = S \tX / ((x,t) \sim (T(x), -t)
\]
and note that $\tPsi$ descends to a map $\Psi: Y \to \Znr(U_\eps, \p U_\eps; \Z_2)$. By \cite[Claim 4.3]{DeyCMCs}, $\Psi$ has no concentration of mass, so the interpolation theorems of \cite[Theorem 13.1, 14.1]{MN2014Willmore} and \cite[Theorem 3.9, 3.10]{MarquesNevesPositive}, tells us there exists a map $\Psi': Y \to \Znr(U_{\eps}, \p U_{\eps}; \Z_2)$ which is continuous and the $\mathbf{F}$ topology and 
\[
\bM(\Psi'(y)) \leq \bM(\Psi(y)) + \delta
\]
We also note the following, which again follows from Dey's original work with minor adaptations to the setting of manifolds with boundary: 
%
\begin{lemma}[Claim 4.3 \cite{DeyCMCs}] \label{lem:PsiPrimeSweep}
$\Psi': Y \to \Znr(U_\eps, \partial U_\eps; \Z_2)$ is a $p+1$ sweepout with no concentration of mass.
\end{lemma}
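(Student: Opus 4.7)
The proof proceeds in two parts: showing no concentration of mass, then showing the $(p+1)$-sweepout property. The first part is essentially automatic: by construction $\Psi'$ is continuous in the $\mathbf{F}$-topology on the compact space $Y$, and the $\mathbf{F}$-topology dominates mass-continuity, so the family $\{\|\Psi'(y)\|\}_{y\in Y}$ is equicontinuous as a family of Radon measures. This gives no concentration of mass. Alternatively one can check the condition directly on $\tilde\Psi$ (using that $\tilde\Phi$ and $\Lambda$ have no concentration, which is preserved by intersection, as in \cite[Proposition 4.1]{DeyCMCs}), then descend to $\Psi$ and pass to the $\mathbf{F}$-approximation $\Psi'$.

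For the sweepout claim, the strategy is to follow \cite[Claim 4.3]{DeyCMCs} almost verbatim, with Lemma \ref{topology_znr} replacing the analogous double cover result for absolute cycles. First, since $\Psi'$ is $\mathbf{F}$-homotopic to $\Psi$ (the interpolation construction produces such a homotopy via \cite[Propositions 1.14, 1.15]{ZhouMultiplicity}), we have $(\Psi')^*(\bar\lambda)=\Psi^*(\bar\lambda)$, reducing the problem to showing $\Psi^*(\bar\lambda^{p+1})\neq 0 \in H^{p+1}(Y;\Z_2)$. Next, the lift $\tilde\Psi\colon S\tilde X\to \CC(U_\eps)$ satisfies $\tilde\Psi(T(x),-t)=U_\eps-\tilde\Psi(x,t)$ by inspection of the definition and \eqref{tPhiSym}; combined with Lemma \ref{topology_znr}, this identifies $S\tilde X\to Y$ as the pullback along $\Psi$ of the double cover $P\circ\p\colon\CC(U_\eps)\to \Znr(U_\eps,\p U_\eps;\Z_2)$. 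Therefore $\alpha:=\Psi^*(\bar\lambda)$ is exactly the $\Z_2$-characteristic class of the double cover $S\tilde X\to Y$.

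Denote by $i\colon X\to Y$ the inclusion of $X\simeq \tilde X/\langle T\rangle$ as the equator $\{t=0\}/\sim$ of $Y$. The construction of $\tilde\Psi$ on $\tilde Z$ shows that $\Psi\circ i=\Phi$, so $i^*\alpha=\Phi^*(\bar\lambda)=:\beta$, and the $p$-sweepout hypothesis gives $\beta^p\neq 0$ in $H^p(X;\Z_2)$. To upgrade to $\alpha^{p+1}\neq 0$, the main point is to exhibit a subcomplex $Z_0\subset Y$ homeomorphic to $\mathbb{RP}^{p+1}$ on which $\alpha$ restricts to the generator of $H^1(\mathbb{RP}^{p+1};\Z_2)$. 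This is produced by choosing a subcomplex $X_0\subset X$ realizing $\beta^p\neq 0$ (equivalently, a map $X_0\to \mathbb{RP}^p$ classifying the restricted double cover $\tilde X_0\to X_0$ nontrivially in top degree), and taking $Z_0=S\tilde X_0/\Z_2$. Because $\tilde X_0\to X_0$ corresponds to the antipodal $S^p\to\mathbb{RP}^p$, its suspension modulo the $\Z_2$-action $(x,t)\mapsto(T(x),-t)$ is the antipodal covering $S^{p+1}\to\mathbb{RP}^{p+1}$; hence $\alpha|_{Z_0}$ is the $\mathbb{RP}^{p+1}$-generator and $\alpha^{p+1}|_{Z_0}\neq 0$, forcing $\alpha^{p+1}\neq 0$ in $H^{p+1}(Y;\Z_2)$.

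The hardest step to make rigorous is extracting the subcomplex $Z_0\simeq\mathbb{RP}^{p+1}$, since it requires upgrading the cohomological statement $\beta^p\neq 0$ to a geometric statement about a subcomplex; the standard way around this is to argue at the level of classifying maps $\psi\colon X\to\mathbb{RP}^\infty$ and use equivariant obstruction/approximation to produce a $\Z_2$-equivariant extension $S\tilde X\to S^\infty$ of the classifying map for $\tilde X\to X$, and then invoke naturality together with $\psi^*(\lambda^p)=\beta^p\neq 0$. This is exactly the argument carried out in the closed setting in \cite[\S 4]{DeyCMCs}. Because the only ingredient about the target that intervenes is the double cover structure in Lemma \ref{topology_znr}, and we have established the same structure for $(\CC(U_\eps),\F)\to \Znr(U_\eps,\p U_\eps;\Z_2)$, the argument transfers with no essential change.
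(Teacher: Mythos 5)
Your proof is correct and follows essentially the same approach as the paper, which does not reproduce the argument but simply cites \cite[Claim 4.3]{DeyCMCs} and notes that it transfers to relative cycles. You correctly identify the two adaptations that matter: (i) the no-concentration-of-mass condition follows either directly from $\mathbf{F}$-continuity of $\Psi'$ on the compact domain $Y$, or by tracing it through $\tilde\Phi$, $\Lambda$ and the intersection operation as in \cite[Proposition 4.1]{DeyCMCs}; and (ii) the double cover $\mathcal{C}(U_\eps)\to\Znr(U_\eps,\p U_\eps;\Z_2)$ from Lemma~\ref{topology_znr} plays the role that the double cover of absolute cycles plays in Dey's original claim, so that $\Psi^*(\bar\lambda)$ is the characteristic class of the quotient $S\tilde X\to Y$. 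One small caveat: in your penultimate paragraph, the step from $\beta^p\neq 0$ to a subcomplex $X_0$ with $S\tilde X_0/\Z_2\cong\mathbb{RP}^{p+1}$ is not literally available (the support of a cycle detecting $\beta^p$ need not be an $\mathbb{RP}^p$), but you correctly flag this and replace it in the final paragraph with the classifying-map/equivariant-join argument, which is what Dey actually runs; that version is sound.
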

\noindent We can also lift $\Psi'$ to a map on Caccioppoli sets, $\tPsi': S \tilde{X} \to \CC(U_{\eps})$. We conclude this section by noting that the $(p+1)$-sweepout constructed here satisfies the following uniform mass upper bound:
\begin{align} \nonumber
\sup_{y \in Y} \mathbf{M}(\Psi'(y)) & \leq \ \sup_{y \in Y} \mathbf{M}(\Psi(y)) + \delta \\ \nonumber
&= \sup_{(x,t) \in S\tilde X} \mathbf{M}(\partial \tilde\Psi(x,t)) + \delta\\ \nonumber
& \leq \ \sup_{x \in X} \mathbf{M}(\Phi(x)) + \sup_{t \in [0,1]}\mathbf{M}(\partial \Lambda(t)) +\delta \\ \nonumber
& \leq \omega_p(U_\eps,g_\eps) + W_0(U_\eps,g_\eps) + 2\delta \\[4pt] \label{PsiPrimeUpper}
& \leq \omega_p(\mathrm{Cyl}(M)) + W_0(M,g) + A(\Sigma) + 2\delta + o_\eps(1) 
\end{align}
having used \cite[Prop 4.1]{DeyCMCs} in the third line to bound $\mathbf{M}(\partial \tilde{\Psi}(x,t))$ and equation \eqref{W0epsUpperBound} in the fourth line to bound $W_0(U_{\eps}, g_{\eps})$.
\section{Proof of Theorem \ref{mainTheorem}}
We begin the proof of theorem \ref{mainTheorem} by constructing our approximate PMCs, $Y_{\eps,p}$, on the cylindrical approximation, $(U_{\eps}, g_{\eps})$.
\subsection{Construction of $Y_{\eps,p}$} \label{YepsConstruction}
With Dey's sweepout construction, we can now apply the min-max theorem \ref{thm:index bound for all g} to the manifold $(U_{\eps}, g_{\eps})$ with prescribing function $h_{\eps}$. In the below, we abbreviate $\omega_{p,\eps} = \omega_{p}(U_{\eps}, g_{\eps})$. Consider the homotopy class of $\Pi = \Pi(S \tX, \tilde{Z})$ of $\tPsi'$. For our choice of $\tZ$, we have 
\begin{align*}
\sup_{z \in \tZ} \AA^{h_\eps}(\tPsi'(z)) &= \sup_{x \in X} \sup_{t \in [0,1/2)} \AA^{h_\eps}(\tPsi'(x,y,t)) \\
& \leq \sup_{x \in X} \sup_{t \in [0, 1/2)} \mathbf{M}(\p \tPsi'(x,y,t)) + \int_M |h_{\eps}| \\
& \leq \sup_{x \in X} \sup_{t \in [0, 1/2)} \mathbf{M}(\p \tPsi(x,y,t)) + \int_M |h_{\eps}| + \delta \\
& = \sup_{x \in X} \AA^{h_\eps}(\tPhi(x)) + \int_M |h_{\eps}| + \delta \\
& \leq \omega_{p,\eps} + \int_M |{h_\eps}| + \delta 
\end{align*}
We now aim to show that $\mathbf{L}^{h_\eps}(\Pi) > \sup_{z \in \tZ} \AA^{h_\eps}(\tPsi'(z))$. Recalling the definition \ref{HomotopyClassDef}, suppose we have a sequence $\{\tPsi_i: S\tX  \to (\CC(U_{\eps}), \bF)\}$ along with homotopies $G_i: S\tX  \times [0,1] \to (\CC(U_{\eps}), \bF)$ such that $G_i(\cdot, 0) = \tPsi', \; G_i(\cdot, 1) = \tPsi_i$, and 
\[
\limsup_{i \to \infty} \sup \{\bF(G_i(z, s), \tPsi'(z)) \; : \; z \in Z, \;\; s \in [0,1]\} = 0
\]
Now define the map 
\begin{equation} \label{modifiedSeq}
\tilde{\Psi}_i^*(x,t) = \begin{cases}
\tPsi'(x) & t =0 \\
G_i((x,t), 2t) & t \in [0, 1/2] \\
\tilde{\Psi}_i(x,t) & t \in [1/2,1] \\
U_{\eps} - G_i((T(x),-t), -2t) & t \in [-1/2,0] \\
U_{\eps} - \tPsi_i(T(x), -t) & t \in [-1,-1/2]
\end{cases}
\end{equation}
Note the continuity at $t = 0$ due the symmetry from equation \eqref{tPhiSym}, thus $\tilde{\Psi}_i^*$ descends to a map, $\Psi_i^*: Y \to \Znr(U_{\eps},\partial U_\eps; \Z_2)$. Moreover, $\tPsi_i$ is $\bF$-continuous as $\tPsi', G_i, \Lambda$ are all individually $\bF$-continuous. 
Finally, $\Psi^*_i$ is homotopic to $\Psi$ in the $\bF$ topology by considering the map 
\[
\tilde{H}_i^*((x,t),s) = \begin{cases}
\tilde{\Psi}'(x) & t =0 \\
G_i((x,t), 2ts) & t \in [0, 1/2] \\
G_i((x,t), s) & t \in [1/2,1] \\
U_{\eps} - G_i((T(x),-2t), -2ts) & t \in [-1/2,0] \\
U_{\eps} - G_i((T(x), -t), s) & t \in [-1,-1/2]
\end{cases} 
\]
which homotopies $\tilde{\Psi}'$ to $\tilde{\Psi}_i^*$. Since $\Psi:Y \to \Znr(U_{\eps}; \Z_2)$ (the corresponding quotiented map from $\tilde{\Psi}$) is homotopic to $\Psi'$ (the corresponding quotiented map from $\tilde{\Psi}'$) from \S \ref{DeySuspension}, we see that the induced map $\Psi_i^*: Y \to \Znr(U_{\eps}; \Z_2)$ (coming from quotienting $\tilde{\Psi}_i^*$) is also a $(p+1)$-sweepout. Thus, we have 
%
\begin{align} \label{bulkComp}
\sup_{(x,t) \in S\tX} \AA^{h_\eps}(\tPsi_i^*(x,t)) &= \sup_{(x,t) \in S \tilde{X}} \bM(\p \tPsi_i^*(x,t)) - \int_{\tPsi_i^*(x,t)} h_{\eps} d \mathcal{H}^{n+1} \\ \nonumber
&= \sup_{(x,t) \in Y} \bM(\Psi_i^*(x,t)) -  \int_{\tPsi_i^*(x,t)} h_{\eps} d \mathcal{H}^{n+1} \\ \nonumber
& \geq \omega_{p+1,\eps} - \int_{U_{\eps}} |h_{\eps}|
\end{align}
Note that the definition of $G_i$ and $\Z_2$ action of $\tPsi'(x,t) = U_{\eps} - \tPsi'(T(x), -t)$  
gives
\begin{align*}
\limsup_i \sup \{\bF(G_i((x,t), 2t), \tPsi'(x,t)) \; : \; t \in [0,1/2]\} &= 0 \\
\limsup_i \sup \{\bF(U_{\eps} - G_i((T(x),-t), -2t), \tPsi'(x,t)) \; : \; t \in [-1/2,0]\} &= 0
\end{align*}
which implies that for $i$ sufficiently large
\begin{align} \label{ZComp}
\sup_{\substack{(x,t) \in S\tX \\ \st t \in [-1/2,1/2]}} \bM(\p \tPsi_i^*(x,t)) &\leq \sup_{(x,t) \in Z} \bM(\p\tPsi'(x,t)) + \delta \\ \nonumber
& \leq \sup_{(x,t) \in Z} \bM(\p\tPsi(x,t)) + 2\delta \\ \nonumber
& = \sup_{x \in X} \bM(\Phi(x)) + 2\delta \\ \nonumber
& \leq \omega_{p,\eps} + 3\delta \\ \nonumber 
\implies \sup_{\substack{(x,t) \in S\tX \\ \st t \in [-1/2,1/2]}} \AA^{h_\eps}(\tPsi_i^*(x,t)) & \leq \omega_{p,\eps} + 3\delta + \int_{U_{\eps}} |h_{\eps}|
\end{align}
By theorem \ref{CylindricalWeylThm}, the convergence of $\omega_p(U_{\eps}, g_{\eps})$ to $\omega_p(C, h)$, and for our choice of prescribing function, $h_{\eps}$, (see condition \ref{hEpsSMall}) we get that for $p$ fixed and $\eps$ sufficiently small, we have that
\begin{align*}
\omega_{p+1,\eps} - \|h_{\eps}\|_{L^1(U_{\eps}, g_{\eps})} & > \omega_{p,\eps} + 3\delta + \|h_{\eps}\|_{L^1(U_{\eps}, g_{\eps})} 
\end{align*}
Thus, in order for \eqref{bulkComp} to hold, i.e. 
\[
\sup_{(x,t) \in S\tX} \AA^{h_\eps}(\tPsi_i^*(x,t)) \geq \omega_{p+1,\eps} - \|h_{\eps}\|_{L^1}
\]
it's clear that 
\[
\sup_{\substack{(x,t) \in S\tX \\\st |t| \geq 1/2 }} \AA^{h_\eps}(\tPsi_i^*(x,t)) \geq \omega_{p+1,\eps} - \|h_{\eps}\|_{L^1}
\]
must hold. Now since $\tPsi_i^*$ equals $\tPsi_i$ on $t \in [-1,-1/2] \cup [1/2,1] $, the computations in \eqref{bulkComp} \eqref{ZComp} imply that for $i$ sufficiently large
\begin{align} \label{SweepDiff}
\sup_{(x,t) \in S\tX} \AA^{h_\eps}(\tPsi_i) & \geq \sup_{\substack{(x,t) \in S\tX \\ |t| \geq 1/2}} \AA^{h_\eps}(\tPsi_i(x,t)) \\ \nonumber
& \geq \omega_{p+1,\eps} - \|h_{\eps}\|_{L^1} \\ \nonumber
& > \omega_{p,\eps} + \|h_{\eps}\|_{L^1} + C_0 + 3\delta\\ \nonumber
& \geq \sup_{z \in Z} \AA^{h_\eps}(\tPsi'(z)) + C_0
\end{align}
where $C_0 > 0$ is independent of $\delta$ and $\eps$ for all $\eps$ sufficiently small. This follows by condition \ref{LOneBound}, choosing $\delta$ sufficiently small, the convergence of $\|h_{\eps}\|_{L^1(U_{\eps},g_{\eps})} \to \|h\|_{L^1(M^+, g)}$ (which follows from condition \ref{endBehavior}), the convergence of the widths, $\omega_{p, \eps} \to \omega_p(\Cyl(M))$ via remark \ref{ConvergenceOfWidths}, and the linear growth of the widths from theorem \ref{CylindricalWeylThm}. Since
this holds for an arbitrary sequence of $\{\Psi_i: S\tX \to (\CC(U_{\eps}), \bF)\}$, we see that 
\[
\mathbf{L}^{h_\eps}(\Pi(S\tX , Z, \tPsi')) > \sup_{z \in Z} \AA^{h_\eps}(\tPsi'(z))
\]
Applying theorem \ref{thm:index bound for all g}, we conclude the existence of an embedded, multiplicity $1$, $h_{\eps}$-Free Boundary PMC on $(U_{\eps}, g_{\eps})$ with optimal regularity and $\mathrm{index}_w \leq p+1$. We label the PMC surface $Y_{\eps,p}$, the underlying Cacciopoli set, $\Omega_{\eps, p}$, and record that by theorem \ref{thm:index bound for all g}, 
\begin{align} \nonumber
\mathcal{A}^{h_\eps}(\Omega_{\eps, p}) &= \mathbf{L}^{h_\eps}(\Pi(S\tX , Z, \tPsi')) \\ \nonumber 
& \geq \omega_{p+1,\eps} - \|h_{\eps}\|_{L^1(U_{\eps}, g_{\eps})} \\ \label{AreaLowerBound}
\implies A(Y_{\eps, p}) & \geq \omega_{p+1, \eps} - 2 \|h_{\eps}\|_{L^1(U_{\eps}, g_{\eps})},
\end{align}
and by equation \eqref{PsiPrimeUpper}
\begin{align} \nonumber
\mathcal{A}^{h_\eps}(\Omega_{\eps, p}) & = \mathbf{L}^{h_\eps}(\Pi(S\tX , Z, \tPsi)) \leq \omega_{p,\eps} + W_0(M,g) + A(\Sigma) + \|h_\eps\|_{L^1(U_\eps,g_\eps)} + 2\delta + o_\eps(1) \\ \label{YepsAreaBounds}
 \implies A(Y_{\eps,p}) & \leq \omega_{p,\eps} + W_0(M,g) + A(\Sigma) + 2\|h_\eps\|_{L^1(U_\eps,g_\eps)} + 2\delta + o_\eps(1)
\end{align}
\subsection{Diameter estimates for $Y_{\eps,p}$}
In this section, we recall the recent work of Chambers and the second author \cite{chambers2024} on the diameter of submanifolds. Their main theorem is 
\begin{theorem}[Thm 2, \cite{chambers2024}] \label{CJ:diameterEstimate}
Suppose $Y^m \subseteq N^{n+1}$ with $\partial Y$ connected and the following hold
\begin{enumerate}
    \item The sectional curvature of $N$ is bounded above globally, i.e. there exists $k_0 > 0$ such that $K_N \leq k_0$.
    \item The injectivity radius of $N$ is bounded below, i.e. there exists $r_0 > 0$ such that $\text{Inj Rad}(N) \geq r_0$.
\end{enumerate}
Then the intrinsic diameter of $Y$ is bounded by 
\[
d_{int}(M) \leq C(k_0, r_0, m) \left( \int_Y |H|^{m-1} + \text{max}(\text{Vol}(Y), \text{Vol}(M)^{1/m}) \right) + \text{d}_{int}(\partial M)
\]
\end{theorem}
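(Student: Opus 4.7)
The plan is to adapt the covering argument of Topping \cite{topping2008relating} for closed submanifolds in $\mathbb{R}^n$ to the setting of an ambient Riemannian manifold with bounded geometry and a submanifold with non-empty boundary. First I would reduce to bounding $W := \sup_{x \in Y} d_Y(x, \partial Y)$: for any two points $p, q \in Y$ the intrinsic distance is controlled by going to $\partial Y$, traversing the (connected) boundary, and coming back, giving $d_{\mathrm{int}}(Y) \leq 2W + d_{\mathrm{int}}(\partial Y)$.

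To bound $W$, I would run a covering argument along a near-extremal minimizing geodesic. Fix $x_0 \in Y$ with $d_Y(x_0, \partial Y) \geq W/2$ and let $\gamma \colon [0, W/2] \to Y$ be a unit-speed length-minimizing geodesic from $x_0$ to $\partial Y$. The hypotheses $K_N \leq k_0$ and $\mathrm{InjRad}(N) \geq r_0$ furnish the usual monotonicity formula for $m$-dimensional submanifolds of $N$, from which I would extract a volume lower bound of the form
\[
\mathcal{H}^m(Y \cap B_r(\gamma(s))) \geq c(k_0, m)\, r^m \qquad \text{for } r \leq \min\!\bigl(r_0/2,\; c'(k_0,m)/|H(\gamma(s))|\bigr),
\]
valid whenever $B_r(\gamma(s)) \cap \partial Y = \emptyset$. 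Setting $r(s) := \min\bigl(r_0/4,\; c'/|H(\gamma(s))|\bigr)$ and applying a Vitali $5r$-covering along $\gamma([0,W/2])$ produces points $s_i$ and radii $r_i = r(s_i)$ with the balls $B_{r_i/5}(\gamma(s_i))$ pairwise disjoint while their concentric enlargements still cover $\gamma$, giving $W/2 \leq C \sum_i r_i$.

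Partition the indices into $I_{\mathrm{flat}} = \{i : r_i = r_0/4\}$ and $I_{\mathrm{curved}} = \{i : r_i = c'/|H(\gamma(s_i))|\}$ and treat each sum separately. On the flat part, each disjoint ball carries $Y$-mass at least $c\, r_0^m$, hence $\sum_{i \in I_{\mathrm{flat}}} r_i \leq C(r_0, m)\,\mathrm{Vol}(Y)$. On the curved part, the identity $r_i = r_i^{m} \cdot r_i^{1-m}$ combined with the lower bound $c\, r_i^m \leq \mathcal{H}^m(Y \cap B_{r_i/5}(\gamma(s_i)))$ and $r_i^{1-m} = (c')^{1-m}\,|H(\gamma(s_i))|^{m-1}$ yields, after trading the pointwise value of $|H|^{m-1}$ for its integral over each disjoint ball,
\[
\sum_{i\in I_{\mathrm{curved}}} r_i \;\leq\; C(k_0, m) \int_Y |H|^{m-1}\, d\mathcal{H}^m.
\]
Combining with the reduction step concludes the argument. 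The $\mathrm{Vol}(Y)^{1/m}$ alternative in the statement arises in the small-volume regime, where packing disjoint balls of radius comparable to $\mathrm{Vol}(Y)^{1/m}$ yields a tighter flat-part estimate, absorbed by taking the maximum.

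The main obstacle will be the pointwise-to-integral trade of $|H|^{m-1}$ on balls whose radii themselves depend on $|H|$; I would handle this by a dyadic/stopping-time selection ensuring that $|H|$ oscillates by at most a fixed factor across each covering ball, so that $|H(\gamma(s_i))|^{m-1} \cdot \mathcal{H}^m(Y \cap B_{r_i/5}) \lesssim \int_{Y \cap B_{r_i}} |H|^{m-1}\, d\mathcal{H}^m$ with a universal constant. A secondary technical point is truncation near $\partial Y$: if some covering ball meets $\partial Y$ before $\gamma$ is exhausted, I would terminate the chain at that step and observe that the remaining distance is already bounded by the corresponding $r_i$.
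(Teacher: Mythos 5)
This theorem is not proved in the present paper; it is quoted verbatim from \cite{chambers2024}, whose argument is a boundary-and-bounded-geometry adaptation of Topping's maximal-function escape lemma \cite{topping2008relating} (extended to Riemannian ambient spaces by Wu--Zheng \cite{wu2011relating}). Your reduction to controlling $W=\sup_{x}d_Y(x,\partial Y)$ and then appending $d_{\mathrm{int}}(\partial Y)$ is fine and matches the structure of the result, but the way you propose to bound $W$ has a genuine gap at its central step.

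Specifically, you choose the covering radius $r(s)=\min\bigl(r_0/4,\;c'/|H(\gamma(s))|\bigr)$ from the \emph{pointwise} value of $|H|$ at the geodesic point $\gamma(s)$, and then assert a mass lower bound $\mathcal{H}^m\bigl(Y\cap B_{r(s)/5}(\gamma(s))\bigr)\geq c\,r(s)^m$ via monotonicity. But every form of the monotonicity formula for varifolds with generalized mean curvature (including Lemma~\ref{monotonicityLemma} of this paper) has an error term controlled by the first variation, hence by $|H|$, on the \emph{entire} ball, not at its center. If $|H|$ spikes somewhere in $B_{r(s)/5}(\gamma(s))$ away from $\gamma(s)$, the exponential (or integral) correction term is unbounded and the density lower bound fails, so the disjoint balls in your Vitali family need not carry the mass you attribute to them. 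You correctly identify this as the ``main obstacle,'' but the proposed fix---a dyadic/stopping-time selection forcing $|H|$ to oscillate by at most a fixed factor across each ball---is not available: $|H|$ is merely an $L^{m-1}$-function and has no a priori modulus of continuity, so no selection of centers along a single geodesic can enforce such oscillation control. The resolution in the Topping line of argument is structurally different: one never chooses radii from pointwise $|H|$ at all, but instead defines, at each point, the largest $r$ for which $\int_{Y\cap B_r}|H|^{m-1}\,d\mathcal{H}^m<\delta\,r$; the escape lemma then says $Y$ leaves that ball, and a Vitali covering of the escape radii pairs each radius directly with an $L^{m-1}$-mass, so the pointwise-to-integral trade never arises. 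That is a different mechanism rather than a patch, and without it your flat/curved decomposition does not close.
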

We would like to apply theorem \ref{CJ:diameterEstimate} to show that $d_{int}(Y_{\eps,p})$ is bounded independently of $\eps$. However, there are two issues with applying this theorem. First, our ambient manifold $(U_{\eps}, g_{\eps})$ does \textit{not} have sectional curvature bounds uniform in $\eps$, as the sectional curvature is becoming unbounded in norm near the ``folding region." Second, $Y_{\eps,p}$ has boundary and a priori, we have no control on the diameter of the boundary for min-max PMCs. To avoid this problem, we will apply \cite[Lemma 8]{chambers2024} which bounds the distance of an interior point to the boundary:
\begin{lemma}[Lemma 8, \cite{chambers2024}] \label{lem:distToBoundary}
Let $P^m$ a smooth submanifold with boundary of $(N^{n+1}, g)$ a smooth manifold (with or without boundary) such that the sectional curvature is bounded above, $K_N \leq k_0$, and the injectivity radius is bounded below, $\text{Inj Rad}(N) \geq r_0 > 0$. Let $x \in \mathring{P}$ and consider a distance minimizing curve in $P$ from $x$ to $\partial P$. Then 
\[
\ell(\gamma) \leq C(m, k_0, r_0) \left(\int_P |H|^{m-1} + \max (\text{Vol}(P), \text{Vol}(P)^{1/m}) \right)
\]
\end{lemma}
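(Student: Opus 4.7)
The plan is to adapt the covering argument used for the intrinsic diameter bound of Theorem \ref{CJ:diameterEstimate}, restricted to a single distance-minimizing curve from $x$ to $\partial P$, so that the $d_{\mathrm{int}}(\partial P)$ term disappears. Parametrize $\gamma \colon [0,L] \to P$ by arc length with $\gamma(0) = x$ and $\gamma(L) \in \partial P$, where $L = \ell(\gamma)$. For each $t \in [0,L)$, let $r(t) := \min\{r_0/4,\, \pi/(4\sqrt{k_0}),\, \mathrm{dist}_N(\gamma(t), \partial P),\, \rho\}$ for a threshold scale $\rho > 0$ to be chosen. This choice guarantees that (a) the extrinsic ball $B_{r(t)}(\gamma(t)) \subset N$ stays away from $\partial P$, and (b) the monotonicity formula for $m$-submanifolds in an ambient of sectional curvature $\leq k_0$ and injectivity radius $\geq r_0$ applies in $B_{r(t)}(\gamma(t))$.

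Next, invoke the monotonicity formula: for $p \in P$ and $r \leq r(p)$ as above, there is a dichotomy
\[
\text{either } \mathrm{Vol}(P \cap B_r(p)) \geq c_m\, r^m \quad \text{(``good'')}, \quad \text{or}\quad \int_{P \cap B_r(p)} |H|^{m-1} \geq c_m \, r \quad \text{(``bad'')},
\]
obtained by applying H\"older and the standard monotonicity inequality $\tfrac{d}{dr}\bigl(e^{C k_0 r^2} r^{-m} \mathrm{Vol}(P \cap B_r(p))\bigr) \geq -\tfrac{C}{r^{m-1}}\int_{P \cap B_r(p)} |H|\, \dist(\cdot,p)^{1-m}$ (cf.\ \cite{simon1983lectures}). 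Apply a Vitali-type selection to the collection $\{B_{r(t)}(\gamma(t))\}_{t \in [0,L]}$: extract a finite disjoint subfamily $\{B_{r_i}(\gamma(t_i))\}$ such that the enlarged balls $B_{5 r_i}(\gamma(t_i))$ still cover $\gamma$. Since the curve is $1$-Lipschitz, this gives $L \leq 10 \sum_i r_i$.

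Then, split the sum into ``good'' indices $G$ and ``bad'' indices $B$ according to which alternative of the dichotomy holds at scale $r_i$. On $G$, disjointness and the volume lower bound give
\[
\sum_{i \in G} r_i \leq \sum_{i \in G} \frac{\mathrm{Vol}(P \cap B_{r_i}(\gamma(t_i)))}{c_m\, r_i^{m-1}} \leq \frac{1}{c_m\, \rho^{\,m-1}}\, \mathrm{Vol}(P),
\]
while on $B$ disjointness and the mean curvature lower bound give
\[
\sum_{i \in B} r_i \leq \frac{1}{c_m}\int_P |H|^{m-1}.
\]
Choosing $\rho := \min\{r_0/4,\, \pi/(4\sqrt{k_0}),\, \mathrm{Vol}(P)^{1/m}\}$ balances the first estimate to $C \max(\mathrm{Vol}(P), \mathrm{Vol}(P)^{1/m})$, and summing yields the claimed bound on $\ell(\gamma)$.

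The main subtlety I expect is maintaining the correct exponent $m-1$ on $|H|$ in the bad estimate (H\"older must be applied with the right weights against the Jacobian $r^{1-m}$ in the monotonicity formula) and ensuring the truncation $r(t) \leq \mathrm{dist}_N(\gamma(t), \partial P)$ is compatible with covering $\gamma$ up to its endpoint in $\partial P$; this is handled by noting that since $\gamma$ is distance-minimizing to $\partial P$, the radii $r_i$ decay at most linearly near the endpoint, so the geometric tail of the covering still sums to a bounded quantity.
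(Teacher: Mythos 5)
This lemma is quoted from \cite{chambers2024} and the present paper does not reprove it, so there is no ``paper's own proof'' to compare against; I will assess your proposal on its own terms.

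Your overall plan (Topping-style dichotomy plus a Vitali covering along the minimizing curve) is the right family of ideas, but as written there is a genuine gap in the ``good'' estimate. You set $r(t) = \min\{r_0/4,\ \pi/(4\sqrt{k_0}),\ \mathrm{dist}_N(\gamma(t),\partial P),\ \rho\}$, so by construction $r_i \le \rho$; yet the chain
\[
\sum_{i\in G} r_i \ \le\ \sum_{i\in G} \frac{\mathrm{Vol}(P\cap B_{r_i})}{c_m\, r_i^{m-1}} \ \le\ \frac{1}{c_m\,\rho^{m-1}}\,\mathrm{Vol}(P)
\]
requires $r_i^{m-1}\ge\rho^{m-1}$, i.e.\ $r_i\ge\rho$, which is exactly what the truncation forbids. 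If one instead only uses disjointness and $\mathrm{Vol}(P\cap B_{r_i})\ge c_m r_i^m$, one gets $\sum_G r_i^m\lesssim\mathrm{Vol}(P)$, which does \emph{not} control $\sum_G r_i$ when the radii are small. And the radii can be forced small away from the endpoint of $\gamma$: $\mathrm{dist}_N(\gamma(t),\partial P)$ can be tiny while $\mathrm{dist}_P(\gamma(t),\partial P)=L-t$ is large, e.g.\ when $P$ returns extrinsically close to its own boundary. Your concluding remark about a ``geometric tail'' only addresses the behavior near $t=L$ and does not repair this: the small-$r_i$ balls need not be concentrated near the terminal point of $\gamma$, and a direct count shows that with many disjoint good balls of radius $d\ll\rho$ one can have $\sum_G r_i\gg\mathrm{Vol}(P)/\rho^{m-1}$.

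A secondary issue is the dichotomy itself. The monotonicity inequality you cite involves $\int_{B_r}|H|$, and passing to $\int_{B_r}|H|^{m-1}$ via H\"older needs an a priori upper bound on the local density (else the conjugate norm is not controlled), which is exactly what is under negotiation in the dichotomy. Moreover, the standard Topping-type argument does not produce ``$\int_{B_r\cap P}|H|^{m-1}\ge c_m r$ at the same scale $r$ at which the density fails''; it produces a lower bound on the maximal function $\sup_{r'<r} r'^{-m}\int_{B_{r'}}|H|^{m-1}$, i.e.\ a concentration at \emph{some} scale $r'\le r$. The Vitali argument therefore has to be run at a carefully chosen stopping scale (the first scale at which the density ratio drops to a fixed $\delta$, which exists by intermediate value since the ratio is $\approx\omega_m$ near $0$ and decays at large scales), and the good/bad split must be formulated at that adaptive scale, with the near-boundary points (where $\mathrm{dist}_N(\cdot,\partial P)$ truncates the stopping time before the density can drop) handled as a separate class and shown to contribute a controlled amount. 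Without these modifications, the argument as written does not close.
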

\noindent We will now apply lemma \ref{lem:distToBoundary} to a part of $Y_{\eps,p}$. For $P \subseteq M$, let $d_{ext}(P)$ denote the \textit{extrinsic diameter}. We aim to prove the following:
%
%
\begin{proposition} \label{prop:extDiamBound}
Suppose $P^m \subseteq (U_{\eps}, g_{\eps})$ is a connected submanifold with $\text{Vol}(P) \leq K$ and $|H| \leq C$ for some $C, K > 0$ and $\partial P^m \subseteq \partial U_{\eps}$ (potentially empty). Then there exists $\alpha = \alpha(C,K)$, independent of $\eps$ such that $\text{d}_{ext}(P) \leq \alpha$.
\end{proposition}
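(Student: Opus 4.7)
The plan is to apply the diameter estimate Lemma \ref{lem:distToBoundary} of \cite{chambers2024} to $P$ in $(U_\epsilon, g_\epsilon)$. The obstruction is that $(U_\epsilon, g_\epsilon)$ lacks uniform sectional curvature bounds as $\epsilon \to 0$, due to the folding region $\gamma_\epsilon(\partial U_\epsilon \times [z_\epsilon, \delta_\epsilon])$, where $\vartheta_\epsilon$ varies rapidly. The strategy is to isolate this region, show it has uniformly bounded $g_\epsilon$-diameter via Lemma \ref{norm}, and apply the diameter estimate on its complement, where Lemma \ref{geomconv} provides uniform ambient bounds.

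First, I would invoke Lemma \ref{norm} to obtain a chart $\theta_\epsilon \colon \partial U \times [-\mathfrak{d}_1, \mathfrak{d}_1] \to U_\epsilon$ covering the folding region, in which $\theta_\epsilon^* g_\epsilon$ is uniformly $C^1$-bounded with respect to the fixed product metric $h' = g\llcorner \partial U \oplus ds^2$. The resulting bi-Lipschitz comparison immediately yields $\mathrm{diam}_{g_\epsilon}(\theta_\epsilon(\partial U \times [-\mathfrak{d}_1, \mathfrak{d}_1])) \leq D_{\mathrm{fold}}$ uniformly in $\epsilon$. Similarly, $\partial U_\epsilon$ converges smoothly to $\Sigma$ by Lemma \ref{geomconv}, so $\mathrm{diam}_{g_\epsilon}(\partial U_\epsilon) \leq D_\partial$ uniformly. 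Set $\Omega_\epsilon := \theta_\epsilon(\partial U \times [-\mathfrak{d}_1/2, \mathfrak{d}_1/2])$ and $V_\epsilon := U_\epsilon \setminus \Omega_\epsilon$, which has two connected components: the ``core side'' $V_\epsilon^{\mathrm{core}}$ (diffeomorphic to most of $U$, with metric converging to $g$) and the ``end side'' $V_\epsilon^{\mathrm{end}}$ (diffeomorphic to $\Sigma \times [0, L_\epsilon]$ with $L_\epsilon \to \infty$ and metric converging smoothly to a product).

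On each of $V_\epsilon^{\mathrm{core}}$ and $V_\epsilon^{\mathrm{end}}$, Lemma \ref{geomconv} yields smooth convergence of $g_\epsilon$ to a limit with bounded sectional curvature and positive injectivity radius inherited from the cross-section $\Sigma$ and the fixed $g$. Decomposing $P = (P \cap \Omega_\epsilon) \cup (P \cap V_\epsilon^{\mathrm{core}}) \cup (P \cap V_\epsilon^{\mathrm{end}})$, the first piece has $g_\epsilon$-diameter at most $D_{\mathrm{fold}}$. To each connected component of $P \cap V_\epsilon^{\mathrm{core}}$ or $P \cap V_\epsilon^{\mathrm{end}}$, I apply Lemma \ref{lem:distToBoundary} in the corresponding ambient (viewed as a manifold with boundary carrying uniform geometric bounds); together with the volume bound $\mathrm{Vol}(P) \leq K$ and mean curvature bound $|H_P| \leq C$, this yields a uniform upper bound on the intrinsic distance from any interior point of the piece to its relative boundary. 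The relative boundary lies in $\partial U_\epsilon$ or in $\theta_\epsilon(\partial U \times \{\pm \mathfrak{d}_1/2\})$, each of which has uniformly bounded $g_\epsilon$-diameter. Since $P$ is connected, combining the bounded contributions over the finitely many pieces yields the extrinsic diameter bound $\mathrm{diam}_{\mathrm{ext}, g_\epsilon}(P) \leq \alpha(C, K)$.

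The main obstacle is verifying the uniform injectivity radius lower bound on $V_\epsilon^{\mathrm{end}}$, since the stretching in the $s$-coordinate makes this region have $g_\epsilon$-diameter tending to infinity while carrying the boundary $\partial U_\epsilon$: the product structure of the cylindrical limit provides the injectivity radius bound away from $\partial U_\epsilon$ (inherited from $\mathrm{inj}_{(\Sigma, g\llcorner\Sigma)}$), but some care is needed near $\partial U_\epsilon$ because interior points close to the boundary typically have shrinking injectivity radius. This is handled either by appealing to the ``manifold with boundary'' version of Lemma \ref{lem:distToBoundary} already provided in \cite{chambers2024} or, alternatively, by doubling $V_\epsilon^{\mathrm{end}}$ across $\partial U_\epsilon$ to form a closed cylinder $\Sigma \times [-L_\epsilon, L_\epsilon]$ in which the same argument can be run. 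A subsidiary concern is to keep track of how many connected components $P$ can have inside each region, but a straightforward monotonicity argument combined with $\mathrm{Vol}(P) \leq K$ bounds this number uniformly.
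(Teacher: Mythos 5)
Your overall plan is aligned with the paper's: decompose $(U_\eps, g_\eps)$ into a region with uniform ambient bounds where Lemma~\ref{lem:distToBoundary} applies, a region of bounded diameter near the fold/core, and recombine using connectivity of $P$. The paper cuts at $\{t = z_\eps\}$ and uses the constancy of $\vartheta_\eps$ on $[0,z_\eps]$ to get uniform $C^2$ control in the rescaled $s$-coordinate directly, rather than invoking Lemma~\ref{norm}; both routes are workable, and yours using Lemma~\ref{norm} to bound the diameter of the fold is reasonable.

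The genuine gap is in your recombination step: ``combining the bounded contributions over the finitely many pieces.'' Lemma~\ref{lem:distToBoundary} gives, for a component $Q$ of $P \cap V_\eps^{\mathrm{end}}$, a bound on $\mathrm{dist}_Q(q, \partial Q)$ for interior $q$, but $\partial Q$ may have parts $C_1 \subseteq \partial U_\eps$ and $C_2 \subseteq S := \theta_\eps(\partial U \times \{-\mathfrak{d}_1/2\})$, and the $g_\eps$-distance from $\partial U_\eps$ to $S$ is $\sim \int_0^{\delta_\eps}\vartheta_\eps \to \infty$. Knowing that every point of $Q$ is within $\alpha$ of $C_1 \cup C_2$, and that $C_1$, $C_2$ each individually have bounded diameter, does \emph{not} bound $\mathrm{diam}_{\mathrm{ext}}(Q)$ when $C_1$ and $C_2$ are themselves far apart. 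The paper handles this (its ``Case 2'') by picking an interior \emph{bridge point} $w \in Q$ equidistant to $C_1$ and $C_2$; then $\mathrm{dist}_Q(w,C_1) = \mathrm{dist}_Q(w,C_2) \leq \alpha$ by the lemma, and every other point of $Q$ is within $\alpha$ of $C_1$ or $C_2$ and hence within $2\alpha + \max(\mathrm{diam}\,C_1, \mathrm{diam}\,C_2)$ of $w$. Your sketch omits this step entirely, and without it the extrinsic diameter of such a component is not controlled. Relatedly, your remark about bounding the number of connected components via monotonicity is both unnecessary (the paper never bounds it --- once each piece has bounded diameter, connectivity plus the bounded-diameter ``hub'' regions bound the whole) and unlikely to work as stated, since small components near the cut surfaces need not carry a volume lower bound.

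On the injectivity radius: you are right that ``$K \leq k_0 \Rightarrow \mathrm{inj} \geq r_0(k_0)$'' is not automatic, particularly for a manifold with boundary, and the paper's one-line justification is terse. In practice the $C^2$-closeness to a fixed product cylinder $\partial U \times [0, L_\eps]$ with fixed cross-section supplies the bound away from $\partial N_\eps$, and the boundary case is delegated to the hypotheses of Lemma~\ref{lem:distToBoundary} in \cite{chambers2024}, which is stated for ambients with boundary; your doubling alternative would also do.
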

\begin{proof}
It suffices to bound the external diameter of each connected component of $P \cap \{t \leq z_{\eps}\}$ and $P \cap \{t > z_{\eps}\}$. Note that by nature of condition \ref{enum:foldingRegionSmall}, the region $U_{\eps} \cap \{t > z_{\eps}\}$ automatically has finite external diameter. In particular 
\begin{align} \nonumber
\text{d}_{ext}(U_{\eps} \cap \{t > z_{\eps}\}, h_{\eps}) & \leq \sum_{i=1}^p\text{d}_{ext}([U_{\eps} \cap \{z_{\eps} \leq t \leq \delta_{\eps}\}]_i, h_{\eps}) + \text{d}_{ext}(U_{\eps} \cap \{t > \delta_{\eps}\}, h_{\eps})\\ \nonumber
& \leq \sum_{i=1}^p \text{d}_{ext}([U_{\eps} \cap \{z_{\eps} \leq t \leq \delta_{\eps}\}]_i, h_{\eps}) + \text{diam}_g(M) \\ \label{boundaryBound}
&\leq \sum_{i=1}^p \sup_{t \in [0, \hat{t}]} \text{diam}([\partial U_{\eps}]_i, g_t) + \int_{z_{\eps}}^{\delta_{\eps}}v_{\eps}(r) dr + \text{diam}_g(M) \\ \label{niceRegionBound}
& \leq K + o_{\eps}(1) + \text{diam}_g(M)
\end{align}
Here, 
\[
U_{\eps} \cap \{z_{\eps} \leq t \leq \delta_{\eps}\} = \sqcup_{i = 1}^P [U_{\eps} \cap \{z_{\eps} \leq t \leq \delta_{\eps}\}]_i
\]
is the manifolds decomposition into connected components and in the first line, it suffices to bound the external diameter of each of these components, and the external diameter of $U_{\eps} \cap \{t > \delta_{\eps}\}$. Similarly, $\partial U_{\eps} = \sqcup_{i = 1}^p [\partial U_{\eps}]_i$. In line \eqref{boundaryBound}, we bound the diameter of 
\[
[U_{\eps} \cap \{z_{\eps} \leq t \leq \delta_{\eps}\}]_i = [\partial U_{\eps}]_i \times [z_{\eps}, \delta_{\eps}] 
\]
with the metric 
\[
h_{\eps} = g_t \oplus (v_{\eps}(t) dt)^2
\]
by the sum of the diameters of the $[\partial U_{\eps}]_i$ fibre (uniformly bounded independent of $\eps$) and the length of geodesics in the $t$ direction (tending to $0$ by \ref{enum:foldingRegionSmall}). The final bound in equation \eqref{niceRegionBound} is clearly uniform in $\eps$. Thus, the same uniform bound holds for $P \cap \{t > z_{\eps}\}$. \nl 
\indent Let $P_{\eps} := P \cap \{t \leq z_{\eps}\} = \sqcup_{i=1}^{p'} [P_{\eps}]_i$ (again, decomposition into connected components), and WLOG, we assume it is non-empty. We consider $P_{\eps}$ as a submanifold of $N_{\eps} := U_{\eps} \cap \{t \leq z_{\eps}\}$ and note that while $N_{\eps}$ has boundary, the metric has uniform sectional curvature bounds in the coordinates $(x, s)$ where $s(t) = \int_0^t v_{\eps}(r) dr$. This follows from condition \ref{enum:constzeps} on the metric $h_{\eps}$, as
\begin{align*}
h_{\eps} \Big|_{\gamma_{\eps}(\p U_{\eps} \times [0, z_{\eps}])} &= g_{t(s)} \oplus ds^2 \\
t \in [0, z_{\eps}] & \implies s(t) = t \cdot C_{\eps}
\end{align*}
for some constant $C_{\eps}$ tending to infinity as $\eps \to 0$. This follows from the defining properties of $v_{\eps}(r)$ \ref{enum:vepsProps} \ref{enum:constzeps}\ref{enum:bigInt} and see figure \ref{fig:piecewisemetric}. We compute
\begin{align*}
\p_s g_{t(s)} &= t'(s) (\p_t g_t) \Big|_{t = t(s)} \\
\p_s^2 g_{t(s)} &= t''(s) (\p_t g_t) \Big|_{t = t(s)} + t'(s) (\p_t^2 g_t) \Big|_{t = t(s)}
\end{align*}
Note that 
\begin{align*}
t'(s) &= \frac{1}{s'(t)} = \frac{1}{v_{\eps}(t)} \\
\implies |t'(s)| & \leq 1
\end{align*}
By condition \ref{enum:vepsProps}. Similarly
\begin{align*}
t''(s) &= - \frac{s''(t)}{s'(t)^2} \\
t \in [0, z_{\eps}] & \implies v_{\eps}'(t) = 0 \\
\implies s''(t) &= v_{\eps}'(t) = 0 \\
\implies t''(s) &= 0
\end{align*}
So we conclude that 
\[
\p_s^2 g_{t(s)} \Big|_{t \in [0, z_{\eps}]} = t'(s) (p_t^2 g_t) 
\]
Since $g_t$ is the restriction of the original metric on $M$ to the level set $\partial U_{\eps} \times [0, \eps]$, it has bounded $t$ derivatives and bounded $\p_{x_i}$ derivatives as well. Thus $h_{\eps}$ has bounded $C^2$ estimates \textit{independent of $\eps$}, and so $N_{\eps}$ has bounded sectional curvature, i.e. $K_{N_{\eps}} \leq k_0$, for $k_0$ independent of $\eps$. Note that this in turn gives us a uniform lower bound on the injectivity radius of $N_{\eps}$, i.e. $(\text{Inj Rad})(N_{\eps}) \geq r_0(k_0) > 0$. If $P_{\eps}$ has no boundary, then we can apply theorem \ref{CJ:diameterEstimate} and conclude 
\begin{align*}
d_{ext}(P_{\eps}) &\leq d_{int}(P_{\eps}) \\ 
&\leq C(k_0, m) \left[ \int_{P_{\eps}} |H|^{m-1} + \max \left( \mathcal{H}^m(P_{\eps}), \mathcal{H}^m(P_{\eps})^{1/m}\right) \right] \\
& \leq C(k_0,m) \cdot [C^{m-1} + 1] \sup(K, K^{1/m}) = \alpha 
\end{align*}
If $\partial P_{\eps} \neq \emptyset$, we bound each component $[P_{\eps}]_i$ individually. Since 
\[
U_{\eps} \cap \{0 \leq t \leq z_{\eps}\} \cong \partial U_{\eps} \times [0, z_{\eps}] \cong \Sigma \times [0, z_{\eps}] = \sqcup_{i = 1}^m \Sigma_i \times [0, z_{\eps}]
\]
we know that for $\eps$ sufficiently small, each $[P_{\eps}]_i$ will lie inside some connected component of $U_{\eps} \cong \Sigma_j \times [0, z_{\eps}]$. In the following argument, we will suppress the ``$i$" and ``$j$" subindices. We apply lemma \ref{lem:distToBoundary} as follows: Let $x,y \in P_{\eps}$ such that 
\[
d_{ext}(P_{\eps}) = \text{dist}_{g_{\eps}}(x,y) 
\]
Note that $\partial P_{\eps} \subseteq (\p U_{\eps} \times \{0\} \sqcup (\p U_{\eps} \times \{z_{\eps}\})$ by assumption on $P$. Let $C_1 = \partial P_{\eps} \cap \{t = 0\}$, $C_2 = \partial P_{\eps} \cap \{t = z_{\eps}\}$ and note that even though $P_{\eps}$ is connected, $C_1$ and $C_2$ may have multiple comoponents. For some $i_x,i_y \in \{1,2\}$, we have $\dist_{P_{\eps}}(x, \partial P_{\eps}) = \dist_{P_{\eps}}(x, C_{i_x})$ and $\dist_{P_{\eps}}(y, \partial P_{\eps}) = \dist_{P_{\eps}}(y, C_{i_y})$. Choose a point $w \in P_{\eps}$ such that 
\[
L_w = \dist_{P_{\eps}}(w, C_1) = \dist_{P_{\eps}}(w, C_2)
\]
And note by lemma \ref{lem:distToBoundary}, we have 
\begin{align*}
L_w &\leq C(k_0, m) \left[ \int_{P_{\eps}} |H|^{m-1} + \max \left( \mathcal{H}^m(P_{\eps}), \mathcal{H}^m(P_{\eps})^{1/m}\right) \right] \\
& \leq C(k_0,m) \cdot [C^{m-1} + 1] \sup(K, K^{1/m}) = \alpha 
\end{align*}
independent of $\eps$. We now construct a competitor curve to the distance minimizing geodesic (in $U_{\eps}$) from $x \to y$ based on whether $i_x = i_y$ or $i_x \neq i_y$:
\newline 
\textbf{Case 1: $i_x = i_y$} \newline 
Suppose $x,y$ are closer to $C_1$ than $C_2$. Let $\tilde{x}, \tilde{y} \in C_1$ such that 
\begin{align*}
\dist_{P_{\eps}}(x, \tilde{x}) &= \dist_{P_{\eps}}(x, C_1) = \dist_{P_{\eps}}(x, \partial P_{\eps}) \\
\dist_{P_{\eps}}(y, \tilde{y}) &= \dist_{P_{\eps}}(y, C_1) = \dist_{P_{\eps}}(y, \partial P_{\eps})
\end{align*}
Consider the competitor curve to the distance minimizing geodesic (in $U_{\eps}$) from $x \to y$: $x \to \tilde{x}$ (minimizing inside of $P_{\eps}$), $\tilde{x} \to \tilde{y}$ (minimizing inside of $\partial U_{\eps} \times \{0\}$, assumed to be connected), $\tilde{y} \to y$ (minimizing inside of $P_{\eps}$). Then 
\begin{align} \nonumber
\text{dist}_{g_{\eps}}(x,y) &\leq \text{dist}_{P_{\eps}}(x, \tilde{x}) + \text{dist}_{\partial U_{\eps} \times \{0\}}(\tilde{x}, \tilde{y}) + \text{dist}_{P_{\eps}}(\tilde{y}, y) \\ \nonumber
& = \dist_{P_{\eps}}(x, \partial P_{\eps}) + \text{dist}_{\partial U_{\eps} \times \{0\}}(\tilde{x}, \tilde{y}) + \dist_{P_{\eps}}(y, \partial P_{\eps}) \\ \nonumber
& \leq 2 \cdot  C(k_0, m) \left[ \int_{P_{\eps}} |H|^{m-1} + \max \left( \mathcal{H}^m(P_{\eps}), \mathcal{H}^m(P_{\eps})^{1/m}\right) \right] + \text{diam}_{g_{\eps}}(\partial U_{\eps} \times \{0\}) \\ \label{distBound1}
& \leq 2 \alpha + A(\Sigma) + o_{\eps}(1)
\end{align}
having used that the slices $\left(\p U_{\eps} \times \{t\}, g_{\eps} \Big|_{\p U_{\eps} \times \{t\}} \right)$ for $t \in [0, \eps]$ are uniformly geometrically converging to $(\partial U, g \Big|_{\partial U}) = (\Sigma, g \Big|_{\Sigma})$ as $\eps \to 0$. This follows by the form of the metric $g_{\eps}$ (see equation \eqref{modifiedMetric}). If $x,y$ are closer to $C_2$, then one replaces $\text{diam}_{g_{\eps}}(\partial U_{\eps} \times \{0\})$ with $\text{diam}_{g_{\eps}}(\partial U_{\eps} \times \{z_{\eps}\})$ but the same bound holds. \nl 
\begin{figure}[h!]
    \centering
    \includegraphics[scale=0.4]{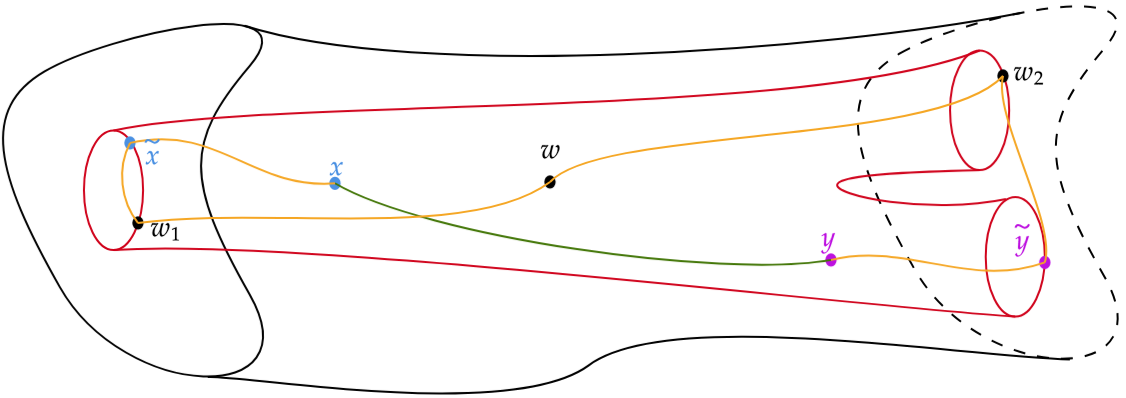}
    \caption{Case 2 visualized. Green denotes distance minimizing geodesic, orange gives the competitor curve.}
    \label{fig:diamcompete}
\end{figure}
\nl \noindent \textbf{Case 2: $i_x \neq i_y$} \newline 
WLOG $i_x = 1$, $i_y = 2$ and suppose there exists $\tilde{x} \in C_1, \tilde{y} \in C_2$ such that 
\begin{align*}
\dist_{P_{\eps}}(x, \tilde{x}) &= \dist_{P_{\eps}}(x, C_1) = \dist_{P_{\eps}}(x, \partial P_{\eps}) \\
\dist_{P_{\eps}}(y, \tilde{y}) &= \dist_{P_{\eps}}(y, C_2) = \dist_{P_{\eps}}(y, \partial P_{\eps})
\end{align*}
And let $\tilde{w}_1 \in C_1$, $\tilde{w}_2 \in C_2$ such that 
\begin{align*}
\dist_{P_{\eps}}(w, \tilde{w}_1) &= \dist_{P_{\eps}}(w, C_1) \\
\dist_{P_{\eps}}(w, \tilde{w}_2) &= \dist_{P_{\eps}}(w, C_2) 
\end{align*}
and recall that $L_w = \dist_{P_{\eps}}(w, C_1) = \dist_{P_{\eps}}(w, C_2) \leq \alpha $ independent of $\eps$. Now consider the competitor curve to the distance minimizing geodesic (in $U_{\eps}$) from $x \to y$: $x \to \tilde{x}$ (minimizing inside of $P_{\eps}$), $\tilde{x} \to \tilde{w}_1$ (minimizing inside of $\p U_{\eps} \times \{0\}$), $\tilde{w}_1 \to w \to \tilde{w}_2$ (minimizing inside of $P_{\eps}$), $\tilde{w}_2 \to \tilde{y}$ (minimizing inside of $\p U_{\eps} \times \{z_{\eps}\}$),  $\tilde{y} \to y$ (minimizing inside of $P_{\eps}$). From this, we can bound
\begin{align*}
\text{dist}_{g_{\eps}}(x,y) &\leq \text{dist}_{P_{\eps}}(x, \tilde{x}) + \text{dist}_{\partial U_{\eps} \times \{0\}}(\tilde{x}, \tilde{y}) + \dist_{P_{\eps}}(w, \tilde{w}_1)  \\
& \quad + \dist_{P_{\eps}}(w, \tilde{w}_2) + \text{dist}_{\partial U_{\eps} \times \{z_{\eps}\}}(\tilde{w}_2, \tilde{y}) + \text{dist}_{P_{\eps}}(\tilde{y}, y) \\
& \leq \alpha + \text{diam}_{g_{\eps}}(\partial U_{\eps} \times \{0\}) + \alpha + \alpha + \text{diam}_{g_{\eps}}(\partial U_{\eps} \times \{z_{\eps}\}) + \alpha \\ 
& \leq 4 \alpha + A(\Sigma) + o_{\eps}(1)
\end{align*}
having bounded $\text{diam}_{g_{\eps}}(\p U_{\eps} \times \{t\})$ via the same argument as in case $1$. See image \ref{fig:diamcompete} for a visualization. \newline 
\indent We have bounded the extrinsic diameter of $P \cap \{t > z_{\eps}\}$, as well as the extrinsic diameter of each component of $P_{\eps} = P \cap \{t \leq z_{\eps}\}$ uniformily in $\eps$. We now argue that $P$ (which is itself connected) has uniformily bounded diameter independent of $\eps$. Let $x,y \in P_{\eps}$ and denote their components by $P_{\eps}^x, P_{\eps}^y$. The extrinsic distance between $x,y$ is less than or equal to the extrinsic distance from $x \to (P \cap \{t = z_{\eps}\})$, plus the diameter of $P \cap \{t \geq z_{\eps}\}$, plus the extrinsic distance from $(P \cap \{t = z_{\eps}\}) \to y$. The first and third terms are bounded by the extrinsic diameter of $P_{\eps}^x$ and $P_{\eps}^y$ respectively, and thus we have a bound of the form $3 \alpha$ for $\alpha$ as in equation \eqref{distBound1}. If $x,y \in P \cap \{t \geq z_{\eps}\}$, we bound the extrinsic distance between them by the extrinsic diameter of $U_{\eps} \cap \{t \geq z_{\eps}\}$, which is bounded uniformily in $\eps$. If $x \in P_{\eps}$ and $y \in P \cap \{t \geq z_{\eps}\}$, we can bound the extrinsic distance between the points by the diameter of $U_{\eps} \cap \{t \geq z_{\eps}\}$ plus the extrinsic diameter of $P_{\eps}^x$, which again produces a uniform bound in $\eps$.
\end{proof}
%
\subsection{Tethering to the Core via the Maximum principle} \label{MaxPrinciple}
%
We will now show that $Y_{\eps,p}$ is ``tethered to the core" for all $\eps$ sufficiently small. This means that each component of $Y_{\eps, p}$ will always touch a point in the interior of $M$ which is uniformily bounded distance (with respect to $g$) away from $\Sigma$, and $Y_{\eps, p} \cap \p U_{\eps} = \emptyset$. Recall the foliation in the contracting neighborhood, $U \cong \Sigma \times [0, \hat{t}]$ as in \S \ref{construction}. 
\begin{proposition} \label{tetherProp}
For all $\eps$ sufficiently small, and each component of $P_{\eps} = \cup_k P_{\eps}^k$
\begin{itemize}
\item There exists a point $p_{\eps}^k \in Y_{\eps,p}^k$ such that $t(p_{\eps}^k) \geq \hat{t}$
\item $Y_{\eps,p}^k \cap \partial U_{\eps} = \emptyset$
\end{itemize}
\end{proposition}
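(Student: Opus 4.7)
My plan is to combine Proposition~\ref{prop:extDiamBound} with a tangent maximum principle, using the contracting foliation $\{\Sigma\times\{t\}\}_{t\in[0,\hat t]}$ as barriers. First, I will show that conclusion (ii) follows from (i). By the area bound \eqref{YepsAreaBounds} and $|h_\eps| \leq 2\|h\|_{L^\infty}$ from condition~\ref{hEpsSMall}, Proposition~\ref{prop:extDiamBound} yields a uniform bound $d_{ext,g_\eps}(Y_{\eps,p}^k) \leq D$ independent of $\eps$. Condition~\ref{enum:bigInt} forces $R_\eps := \int_0^{\delta_\eps} v_\eps(r)\,dr \to \infty$, so for $\eps$ small the $g_\eps$-distance from any point with $t \geq \hat t$ to $\partial U_\eps$ is at least $R_\eps + \hat t - \eps > D$. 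Hence, once a component $Y_{\eps,p}^k$ is known to contain a point with $t \geq \hat t$, the diameter bound forces $Y_{\eps,p}^k \cap \partial U_\eps = \emptyset$.

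To prove (i), I will argue by contradiction. Suppose $Y_{\eps,p}^k \subseteq \{t < \hat t\}$ and let $t_* = \max_{Y_{\eps,p}^k} t$, attained at $p_*$ (which exists since $Y_{\eps,p}^k$ is compact; if $t_*$ is in the original part of the foliation one uses the leaf $\Sigma \times \{t_*\}$, and in the folding region one uses the corresponding leaf $\gamma_\eps(\partial U_\eps \times \{s_*\})$). Locally $Y_{\eps,p}^k$ lies in $\{t \leq t_*\}$ and is tangent to this leaf at $p_*$, while the leaf has mean curvature vector pointing toward $\Sigma$ (equivalently $\partial U_\eps$) with magnitude $H_{t_*}>0$. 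A standard second-order tangent comparison---applied to the sheet of $Y_{\eps,p}^k$ through $p_*$ realizing the maximum---yields, regardless of which sign of normal one chooses for the outer normal of $\Omega_{\eps,p}$ at $p_*$,
\[
|h_\eps(p_*)| \;\geq\; H_{t_*}.
\]
Meanwhile, by compact support of $h$ (condition~\ref{compactPositiveIntegral}) there is $t_h>0$ with $h\equiv 0$ on $\Sigma\times[0,t_h]$, and compactness together with condition~\ref{LessThanSlice} gives a uniform gap $|h| \leq H_t - \mu_0$ on $\Sigma \times [t_h,\hat t]$ for some $\mu_0>0$. The $C^1$ convergence $h_\eps \to 0$ on the folding region together with the $C^1_{loc}(M\setminus\Sigma)$ convergence $h_\eps \to h$ from condition~\ref{endBehavior} then promotes this to $|h_\eps| < H_t$ on $\Sigma \times (\tau_\eps, \hat t]$ with $\tau_\eps \to 0$, which contradicts $|h_\eps(p_*)| \geq H_{t_*}$ as long as $t_* \geq \tau_\eps$.

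The main obstacle will be the degenerate case $t_* \to 0$ as $\eps \to 0$, in which both sides of the tangent inequality vanish and $Y_{\eps,p}^k$ collapses into the folding region. Here I plan to use the geometric convergence of Lemma~\ref{geomconv}: once $t_* < \eps$ the component $Y_{\eps,p}^k$ lies entirely inside $\gamma_\eps(\partial U_\eps \times [0,\delta_\eps])$, a region whose rescaled metric is converging to the product cylinder $(\Sigma \times (-\infty,0],\, g_\Sigma \oplus ds^2)$, and on which $|h_\eps| \to 0$ uniformly. Using the uniform diameter bound and the area lower bound \eqref{AreaLowerBound}, I will extract a subsequential integer rectifiable varifold limit $V$ in this cylinder, which is compactly supported, stationary, and of positive mass. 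Applying the strong maximum principle of Solomon--White \cite{solomon1989strong} to $V$ against the minimal foliation $\{\Sigma\times\{s\}\}_s$ of the model cylinder forces $\mathrm{supp}(V)$ to coincide with a slice; combining with the sliding argument at finite $\eps$---each slice has strictly positive mean curvature in the direction incompatible with tangency of a closed PMC on its $\partial U_\eps$-side---yields the desired contradiction and completes the proof of (i).
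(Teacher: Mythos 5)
Your argument has the same backbone as the paper's proof: the external diameter bound from Proposition~\ref{prop:extDiamBound} together with $\int_0^{\delta_\eps}\vartheta_\eps \to \infty$ reduces (ii) to (i), and (i) is handled by tangent comparison against the contracting foliation at the maximum of the $t$-coordinate. You are also right to flag that the needed inequality $|h_\eps(p_*)|<H_{t_*}$ is not automatic near $\partial U_\eps$, where both the prescribing function and the slice mean curvature degenerate. The paper's proof only invokes condition~\ref{endBehavior} and asserts the inequality holds ``for all $\eps$ sufficiently small,'' which is silent about the possibility that the maximum $t_\eps$ tends to $0$, so the concern is legitimate.

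However, your Solomon--White detour for the degenerate case has genuine gaps. First, \eqref{AreaLowerBound} bounds the mass of $Y_{\eps,p}$ from below, not the mass of the single component $Y_{\eps,p}^k$ that is collapsing, so without injecting a monotonicity argument the limiting varifold $V$ could be trivial. Second, a slice $\Sigma\times\{s_0\}$ of the model cylinder is itself a compactly supported, stationary, integral varifold, so Solomon--White by itself does not produce a contradiction from compactness and stationarity of $V$. Third, your closing ``sliding argument at finite $\eps$'' is once again the maximum principle at a point where $|h_\eps|<H^\eps_t$ is precisely what has not yet been secured. The cleaner resolution is to strengthen the construction of $h_\eps$: for each \emph{fixed} $\eps>0$, the modified slices have mean curvature bounded below by a positive constant (Lemma~\ref{meancurv}, item~1---only the $\eps\to 0$ limit degenerates), $h$ vanishes identically on a collar $\Sigma\times[0,t_h]$ by compact support, and $|h|<H_t$ strictly on $(0,\hat t]$. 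The condition $|h_\eps|<H^{g_\eps}_t$ on every foliating leaf is then an open condition at each fixed $\eps$, compatible with the density of $\mathcal S(U_\eps,g_\eps)$, and may be built into the choice of $h_\eps$; with it the tangent comparison gives a contradiction directly and no degenerate regime remains. This is the additional hypothesis the paper's displayed inequality tacitly assumes.
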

\noindent In fact, the second point follows from the first after noting that $\text{diam}_{ext}(Y_{\eps,p}^k) \leq K$ independent of $\eps$ by proposition \ref{prop:extDiamBound} applied to $P = Y_{\eps, p}$. 
\begin{proof}[Proof of Proposition \ref{tetherProp}] We suppress the $k$ superscript and suppose WLOG that $Y_{\eps, p}$ is connected. Suppose that
\[
t_{\eps} = \sup_{p \in Y_{\eps,p}} t(p) < \hat{t}
\]
By compactness $t_{\eps} = t(p_{\eps})$ for some $p_{\eps} \in Y_{\eps,p}$. But recall that our approximation $h_{\eps}$ satisfies constraint \ref{endBehavior} so that
\[
\forall s \in \Sigma, \qquad |h_{\eps}|(s, t_{\eps}) < H_{t_{\eps}}
\]
for all $\eps$ sufficiently small. Thus, we can apply the standard maximum principle to $Y_{\eps,p}$ at $p_{\eps}$, a contradiction (see figure \ref{fig:preventFBPMC}).
\begin{figure}[h!]
\centering
\includegraphics[scale=0.4]{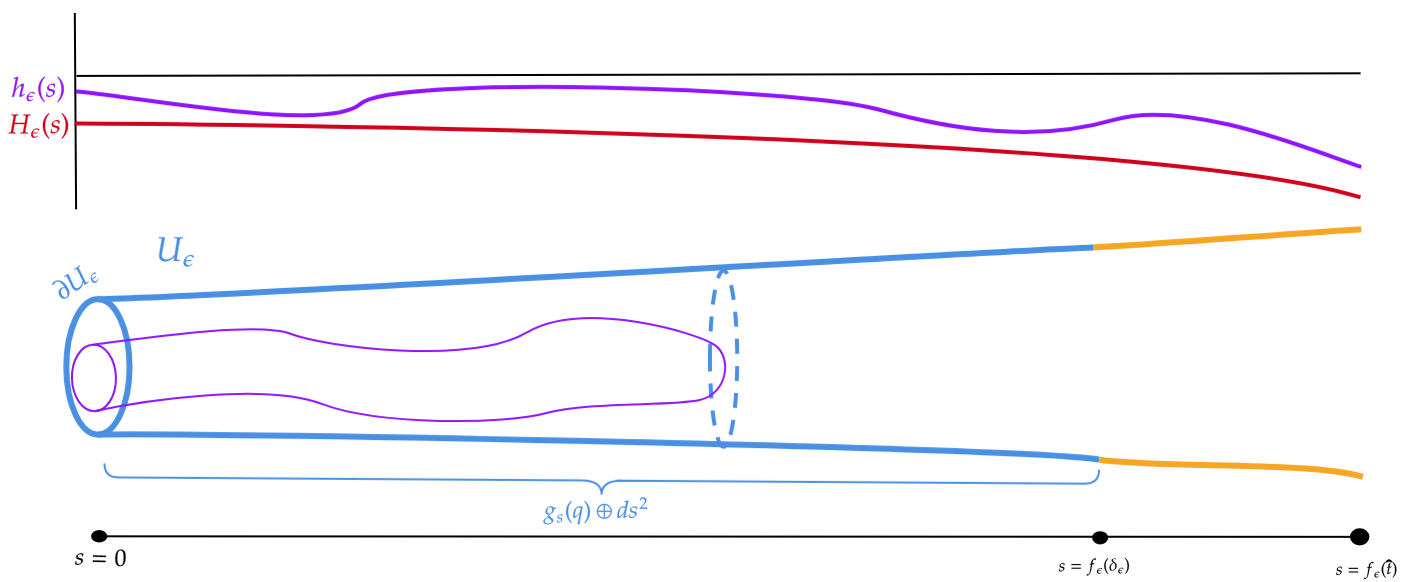}
\caption{Maximum principle argument to prevent the presence of a free boundary prescribed mean curvature surface}
\label{fig:preventFBPMC}
\end{figure}
We also note that $\text{diam}_{ext}(Y_{\eps,p}) \leq K$ independent of $\eps$ from proposition \ref{prop:extDiamBound}, as $Y_{\eps,p}$ has uniformly bounded area (see \eqref{YepsAreaBounds} and remark \ref{ConvergenceOfWidths}) and $H_{Y_{\eps,p}} = h_{\eps}$ where $h_{\eps}$ has uniform $L^{\infty}$ bounds by construction (see point \ref{hEpsSMall}). Thus,
for all $\eps$ sufficiently small
\[
Y_{\eps,p} \cap \partial U_{\eps} = \emptyset
\]
because $\text{dist}(\partial U_{\eps}, \Sigma \times \{\hat{t}\}) \xrightarrow{\eps \to 0} \infty$ as in \S \ref{construction} (in particular because $\int_0^{\delta_{\eps}} v_{\eps}(r) dr \to \infty$).
\end{proof}
\begin{remark}
Proposition \ref{tetherProp} justifies the ``tethered to the core" descriptor and also prevents a free boundary PMC. See figure \ref{fig:tethertocore} for a visualization. 
\end{remark}
\begin{figure}[h!]
\centering
\includegraphics[scale=0.4]{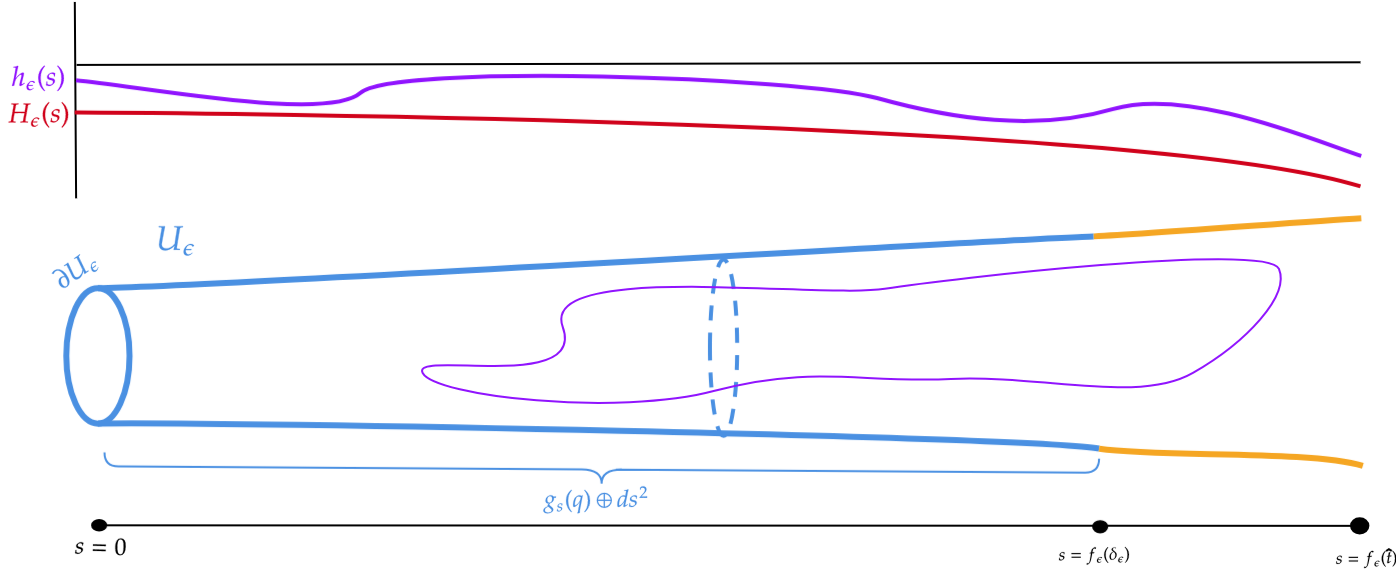}
\caption{Visualization of our PMC touching a part of the core which is a $q \in U$ such that $\text{dist}(q, \partial U) > \hat{t}$}
\label{fig:tethertocore}
\end{figure}
%
%

\begin{remark}
In the above maximum principle argument, we did not need to know the sign of the mean curvature (and hence the underlying Caccioppoli set) since $h$ is small. However, given the sign of $h(p)$, where $p$ is our right most end point, then a choice of normal immediately determines the underlying Caccioppoli set, since the min-max construction of PMCs always yields mean curvature $H = h$, where it is computed with respect to the \textit{outer normal} (see e.g. \cite[\S 2]{SunWangZhou}.
\end{remark}
Though we are considering $h \in C_c^{\infty}(M \backslash \Sigma)$ satisfying assumptions \ref{hCompactSupport}, we note that we can ``shrink" our contracting neighborhood as needed so that proposition \ref{tetherProp} applies to an $h$ satisfying assumptions \ref{hZeroBoundary}.  We will adjust the neighborhood as follows, let: 
\[
\tilde{t} = \sup \{ t \in [0, \hat{t}] \quad \text{s.t.} \quad |h(s,r)| \leq H_r(s), \qquad \forall r \leq t\}
\]
Note that if $h \in C_c^{\infty}(M)$, we clearly have $\tilde{t} > 0$, and if $h$ satisfies assumptions \ref{hZeroBoundary}, then condition \ref{vanishingBoundary} gives $\tilde{t} > 0$ by lemma \ref{quantMCLemma}. We now consider a potentially smaller contracting neighborhood, replacing $\hat{t}$ with $\tilde{t}$, but keeping the $\hat{t}$ notation. With this, proposition \ref{tetherProp} follows verbatim.
\subsection{Construction of $Y_h$: Sending $\eps \to 0$} \label{EpsToZero}
In this section, we will often suppress the $p$ subscript which refers to the homotopy class in the construction of $\{Y_{\eps, p}\}$. We will send $\eps \to 0$ to construct $Y_h \subseteq M \subseteq \Cyl(M)$. Recall that $h \in C_c^{\infty}(M)$ and $h$ satisfies the assumptions of \ref{hCompactSupport}. Our main claim is:
\begin{proposition} \label{PMCConstruction}
Some subsequence $\{Y_{\eps_i}\}$ converges to a multiplicity one, almost embedded PMC, $Y_h \subseteq M$, with mean curvature $H = \pm h$. Moreover, $Y_h$ is closed and $Y_h \cap \Sigma = \emptyset$.
\end{proposition}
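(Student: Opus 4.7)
The plan is to pass to a subsequential varifold limit of the $\{Y_{\eps,p}\}_\eps$ and then identify this limit with a closed, multiplicity-one, almost-embedded $h$-PMC in $M$, using the tethering (Proposition \ref{tetherProp}) and diameter bound (Proposition \ref{prop:extDiamBound}) to confine the support, the compactness theorem with changing metrics (Theorem \ref{thm:compactness with changing metrics}) to obtain regularity, and the Solomon--White maximum principle to prevent accumulation on $\Sigma$. First I would choose a subsequence along which $|Y_{\eps_i,p}| \to V_\infty$ as varifolds and $\Omega_{\eps_i,p} \to \Omega_\infty$ in the flat topology; this uses the uniform area bound \eqref{YepsAreaBounds} together with the uniform $L^\infty$ bound $|H_{Y_{\eps_i,p}}| = |h_{\eps_i}| \leq 2\|h\|_\infty$ (condition \ref{hEpsSMall}), which give bounded first variation. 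By Proposition \ref{tetherProp} combined with Proposition \ref{prop:extDiamBound}, each component of $Y_{\eps_i,p}$ lies in a fixed $g_{\eps_i}$-bounded neighborhood of the core that is independent of $\eps_i$; hence $\mathrm{spt}(V_\infty) \subset M \subset \mathrm{Cyl}(M)$, and $V_\infty \neq 0$ because each component of $Y_{\eps_i,p}$ carries a point with $t \geq \hat t$ that persists in the limit. Closedness of the limit follows from $Y_{\eps_i,p} \cap \partial U_{\eps_i} = \emptyset$ for small $\eps_i$.

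Next I would upgrade $V_\infty$ to an almost-embedded hypersurface by applying Theorem \ref{thm:compactness with changing metrics} on an open exhaustion of $\mathrm{Cyl}(M) \setminus \Sigma$: the metrics $g_{\eps_i}$ converge smoothly to $g_{\mathrm{Cyl}}$ on compact subsets of $\mathrm{Cyl}(M) \setminus \Sigma$ (Lemma \ref{geomconv}); the approximating functions $h_{\eps_i} \in \mathcal S(g_{\eps_i})$ converge smoothly to $h$ on $M \setminus \Sigma$ and to $0$ on the end (condition \ref{endBehavior}); and the weak index bound $\mathrm{index}_w(Y_{\eps_i,p}) \leq p+1$ is uniform. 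Parts \ref{compactness thm:smooth limit}--\ref{compactness thm:locally smoothly convergence} of Theorem \ref{thm:compactness with changing metrics} then yield $V_\infty \llcorner (\mathrm{Cyl}(M)\setminus\Sigma) = m_h |Y_h|$ for an almost-embedded hypersurface $Y_h$ with $H_{Y_h} = \pm h$ and some integer multiplicity $m_h \geq 1$.

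The main obstacle is to exclude $\mathrm{spt}(V_\infty) \cap \Sigma \neq \emptyset$ and to establish $m_h = 1$. For the former, since $h \in C_c^\infty(M)$ (and hence $h_{\eps_i}$, for $\eps_i$ small) vanishes identically in a collar of $\Sigma$, the varifold $V_\infty$ is \emph{stationary} there; the contracting foliation $\{\Sigma \times \{t\}\}_{t \in (0,\hat t]}$ has $H_t > 0$ pointing toward $\Sigma$, so it is a strict mean-curvature barrier, and the Solomon--White maximum principle for stationary varifolds \cite{solomon1989strong, white2009maximum} forces $\mathrm{spt}(V_\infty) \cap \Sigma = \emptyset$ — otherwise the infimal leaf meeting $\mathrm{spt}(V_\infty)$ would have to coincide with the support locally, contradicting $H_t > 0$ against stationarity. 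For multiplicity one, Theorem \ref{densityTheorem} gives density $1$ at regular points (and $2$ at touching points) of $Y_h$ provided $Y_h$ has no minimal connected components. Any such minimal component $\Gamma$ must lie in $\{h=0\}$ and, by the previous step, be disjoint from $\Sigma$; on the good region $M_s$ (condition \ref{goodnessAssumption}) part \ref{compactness thm:generic multiplicity one convergence} of Theorem \ref{thm:compactness with changing metrics} forces $m_h = 1$ and rules out such $\Gamma$ there, while in the complementary collar $\{0 < t < s\}$ where $h$ may vanish, $V_\infty$ is again stationary and a second application of Solomon--White against the contracting foliation precludes any minimal component touching or contained in that region. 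Consequently $V_\infty = |Y_h|$ is the multiplicity-one varifold of a closed, almost-embedded PMC $Y_h \subset M$ with mean curvature $\pm h$ and $Y_h \cap \Sigma = \emptyset$, as required.
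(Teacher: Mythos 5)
Your overall architecture — take the limit, confine the support via tethering and diameter bounds, regularize away from $\Sigma$ via Theorem \ref{thm:compactness with changing metrics}, and use the Solomon--White maximum principle together with Theorem \ref{densityTheorem} to control behavior near $\Sigma$ — matches the paper's strategy. However, there is a genuine gap in the central step: the claim that the Solomon--White maximum principle ``forces $\mathrm{spt}(V_\infty) \cap \Sigma = \emptyset$'' via the infimal-leaf argument is not correct as stated. The Solomon--White theorem (Theorem \ref{SolWhiteMax} of the paper) says that if the support of a stationary varifold touches a minimal boundary component $\Sigma_k$, then the support must contain \emph{all} of $\Sigma_k$; it does not say the support is disjoint from the boundary. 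Concretely, the conclusion of the maximum principle is the decomposition $V_\infty = \sum_k a_k \cdot \Sigma_k + W_h$ with $\mathrm{spt}(W_h) \cap \Sigma = \emptyset$, where the integers $a_k \geq 0$ may a priori be positive. Your infimal-leaf barrier argument only yields a contradiction when the infimal leaf is at some $t_0 > 0$ (where $H_{t_0} > 0$); but if $\mathrm{spt}(V_\infty)$ actually contains a point of $\Sigma$, the infimal leaf is $\Sigma$ itself, which is minimal ($H_0 = 0$), and no contradiction with stationarity arises. The scenario $a_k > 0$ — a component of $Y_{\eps_i,p}$ ``pinching off'' and converging with multiplicity to $\Sigma_k$ — is thus not excluded by the barrier alone.

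The paper fills this gap with its ``no-pinching'' Lemma \ref{NoPinchingLemma}, which rests on the monotonicity formula Lemma \ref{monotonicityLemma} for varifolds of bounded first variation in an almost-Euclidean metric. The idea is that the tethering (Proposition \ref{tetherProp}) forces each component $Y^*_{\eps_i,p}$ to contain a point $y_i$ with $t(y_i) \geq \hat t$, hence at definite distance from both $\Sigma_k$ and $W_h$; passing to a limit $y_i \to y$ and applying the monotonicity formula in a small ball around $y$ shows $\liminf_i \|Y^*_{\eps_i,p}\|(B^{g_\infty}_\rho(y)) \geq C > 0$, while the limit varifold $a_k \Sigma_k + W_h$ carries zero mass in that ball. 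This contradiction forces $a_k = 0$. You need something of this nature — a quantitative mass-localization argument away from $\Sigma$ — to rule out the multiple-copies-of-$\Sigma_k$ scenario; your Solomon--White application, though it is indeed used (as in the paper) to get the decomposition and to show $W_h$ stays away from $\Sigma$, cannot by itself finish the job. Once that step is repaired, your subsequent argument (ruling out minimal components of $W_h$ in $\{0 < t < s\}$ via the right-endpoint maximum principle, and multiplicity one via Theorem \ref{densityTheorem} and the goodness of $h$ on $M_s$) does follow the paper's route.
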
 
To prove proposition \ref{PMCConstruction} we proceed in a few steps
\begin{enumerate}
\item \label{takeLimit} 
Define the varifold $V_h = \lim_{i \to \infty} V_{Y_{\eps_i}}$. Away from $\Sigma$, the underlying metric $g_{\eps}$ converges smoothly, and so theorem \ref{thm:compactness with changing metrics} gives that $V_h$ is induced by a hypersurface (potentially with multiplicity) in this region.

\item \label{MaxPrincipleDecomposition} Near $\Sigma$, we know that $V_h$ is stationary for the area functional due to the compact support of $h$. Proceeding as in the proof \cite[Step 2, Theorem 10]{song2018existence}, we can apply the maximum principle of Solomon--White/White \cite{solomon1989strong, white2009maximum} and the constancy theorem to conclude that $V_h =  \sum_{i = 1}^m a_i \Sigma_i + W_h$, where $W_h$ is a bounded distance away from $\Sigma$ and $\text{supp}(B_h) \subseteq \Sigma$.

\item \label{NoSigma} We argue that $a_i = 0$ for all $i$ by considering a connected component of $Y_{\eps_i}$ which converges to some connected subcomponents of $\Sigma$, i.e. $\lim_{i \to \infty} Y_{\eps_i}$ has support in $\Sigma$. Such a component of $Y_{\eps_i}$ must also be ``tethered to the core", and by a monotonicity formula argument (see lemma \ref{monotonicityLemma}), some mass of $Y_{\eps_i}$ must be contained in the core, uniformly in $\eps_i$. This prevents varifold convergence to any varifold supported in $\Sigma$ by a ``no-pinching" argument. Hence, our limit PMC is disjoint from $\Sigma$.

\item \label{NoMinimalPart} We further decompose $W_h = Y_{ngbd} + W_{h}^*$, where $Y_{ngbd}$ is connected and contained in $\Sigma \times [0, \hat{t}]$, our contracting neighborhood. We show, by a similar maximum principle + right endpoint argument, that $Y_{ngbd}$ must be empty, and thus $Y_h = W_h^*$ consists of a multiplicity one PMC where $h$ satisfies the goodness conditions of \ref{goodnessAssumption} and hence by theorem \ref{thm:compactness for FPMC}, 
$W_h^*$ must be an almost-embedded,
multiplicity $1$ PMC.

\item We conclude the proof of theorem \ref{mainTheorem} by performing the above construction and analysis for each homotopy class associated to $\omega_p$, forming a collection of surfaces $\{Y_{h,p}\}$. We show that these are distinct by noting that they are multiplicity one, density at most $2$, and their area grows at least linearly via the Weyl Law for manifolds with cylindrical ends. We also show the corresponding index bounds on the regular set, $\mathcal{R}(Y_{h,p})$. 
%
\end{enumerate}
\subsubsection{\ref{takeLimit}: Taking the limit}
Part \ref{takeLimit} follows as in Song \cite[Theorem 10, Step 1]{song2018existence}: By the diameter bounds of proposition \ref{prop:extDiamBound} and the tethering argument of proposition \ref{tetherProp}, we know that there exists $q \in M \backslash \Sigma$ and an $R > 0$, both independent of $\eps$, such that $Y_{\eps} \subseteq B_{g_{\eps}}(q, R)$. Now for a subsequence $\{Y_{\eps_i}\}$, set
\begin{align} \label{LimitVarifold}
V_h &:= \lim_{i \to \infty} Y_{\eps_i} \\ \label{LimitVarifoldMass}
\|V_h\| &= \lim_{i \to \infty} A(Y_{\eps_i}) \in \left[ \omega_{p+1}(\Cyl(M)) - 2 \|h\|_{L^1}, \omega_{p}(\Cyl(M)) +W_0(M,g) + A(\Sigma) + 2\|h\|_{L^1(M,g)}\right]
\end{align}
Note that the interval in \eqref{LimitVarifoldMass} is well defined and non-empty due to lemma \ref{lem:PsiPrimeSweep} and equation \eqref{PsiPrimeUpper} (after choosing $\delta = o_{\eps}(1)$ in \eqref{initialSweepout}). 
In the above notation, $V_h$ is a varifold in $(\Cyl(M), g_{\Cyl})$, and $g_{\Cyl}$ is Lipschitz. Despite the lack of smooth convergence, $g_{\eps} \to g_{\Cyl}$, the uniform $C^1$ bounds on $g_{\eps}$ afforded by lemmas \ref{geomconv} \ref{norm} mean that each $Y_{\eps_i}$ induces a varifold with uniformly bounded first variation in local charts. Thus, the
standard compactness of rectifiable varifolds with bounded first variation (see e.g. \cite[Theorem 5.8]{simon1983lectures}) gives the existence of $V_h$. 
This is the same argument as in \cite[Theorem 10, Step 3]{song2018existence} except we replace stationary varifolds with varifolds of bounded first variation. Via theorem \ref{thm:compactness with changing metrics}, we conclude that away from $\Sigma = \partial U$, $\text{supp}(V_h)$ is actually a smooth hypersurface and the convergence happens smoothly, graphically, away from a finite number of points (potentially with multiplicity). 
\subsubsection{\ref{MaxPrincipleDecomposition}: Initial decomposition} \label{MPsection}
To prove part \ref{MaxPrincipleDecomposition}, we first recall the maximum principle of Solomon--White \cite{solomon1989strong} and White \cite{white2009maximum}:
\begin{theorem}[\cite{solomon1989strong} Theorem, \cite{white2009maximum} Theorem 4] \label{SolWhiteMax}
	Suppose that $N^{n+1}$ is a smooth Riemannian manifold with boundary, $\partial N$ minimal. Suppose that $V$ is an $n$-dimensional varifold that minimizes area to first order in $N$. 
	\begin{enumerate}
		\item \label{maxPrinciple1} If $\text{Spt}(V)$ contains any point of $\partial N$, then it must contain all of $\partial N$.
		\item If $V$ is a stationary integral varifold, then $V$ can be written as $V = W + W'$ where $\text{supp}(W)$ is a connected subset of $\partial N$ and $\text{supp}(W') \cap \partial N = \emptyset$.	
	\end{enumerate}
\end{theorem}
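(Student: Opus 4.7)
The plan is to prove part 1 via an open–closed argument on each connected component of $\partial N$. Let $\Sigma$ be such a component, and set $S = \mathrm{Spt}(V) \cap \Sigma$. Closedness of $S$ in $\Sigma$ is automatic from the fact that $\mathrm{Spt}(V)$ is closed. The crux is to prove openness: if $p \in S$, then some neighborhood of $p$ in $\Sigma$ is contained in $\mathrm{Spt}(V)$. Once this is established, connectedness of $\Sigma$ forces $S = \emptyset$ or $S = \Sigma$, which is what the statement asserts component by component.

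For the openness step, I would work in Fermi coordinates $(x, t)$ near $\partial N$, where $t$ is the signed distance to $\partial N$, positive in $N$. Since $\partial N$ is minimal, the mean curvature of the level set $\{t = t_0\}$ with respect to $\partial_t$ vanishes to first order as $t_0 \to 0$, so $H_{\{t=t_0\}} = O(t_0)$. Suppose for contradiction that $p \in S$ but there exist points $q_j \to p$ in $\Sigma \setminus \mathrm{Spt}(V)$. For a nonnegative $\phi$ compactly supported in a small neighborhood of $p$ with $\phi(p) > 0$ and $\phi \equiv 0$ near each $q_j$, consider the vector field $X = \phi \partial_t$. The stationarity identity $\int \mathrm{div}_S X \, dV(x, S) = 0$ decomposes into a bulk term involving $\langle \nabla \phi, \partial_t \rangle_S$ and $\phi$ times (tangential components of) the second fundamental form of the level sets of $t$, which is $O(t)$ by the minimality of $\partial N$, and a boundary contribution concentrated where $V$ meets $\{t=0\}$. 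Because $V$ has positive density at $p \in \partial N$, a careful accounting shows this contribution is strictly positive while the bulk term is negligible, contradicting stationarity.

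For part 2, the preceding analysis shows that $\mathrm{Spt}(V)$ meets $\partial N$ in a union $\Sigma_0$ of connected components of $\partial N$. Set $W = V \llcorner \Sigma_0$ and $W' = V - W$. Since $\Sigma_0$ is a smooth compact $n$-submanifold of $N$ without boundary, and $W$ is a stationary integral $n$-varifold supported on $\Sigma_0$, the Constancy Theorem gives $W = \sum_i m_i |\Sigma_i|$ with $m_i \in \mathbb{Z}^+$, where the $\Sigma_i$ are the components of $\Sigma_0$. Then $W'$ is itself a stationary integral varifold, and $\mathrm{Spt}(W') \cap \partial N = \emptyset$ by construction, completing the decomposition.

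The main obstacle is the local strong maximum principle underlying the openness in part 1. It requires constructing test fields adapted to the Fermi foliation and sharply estimating the curvature error terms of the level sets of $t$, so that the density contribution at the touching point dominates. My implementation would follow the original arguments of Solomon–White and White, where the minimality of $\partial N$ is precisely what allows the boundary geometry to be absorbed into a negligible error.
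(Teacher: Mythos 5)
The paper does not prove this theorem; it is quoted verbatim from Solomon--White and White, with only a short remark afterwards noting that the proof is local and hinges on constructing a vector field supported in a tubular neighborhood of $\partial N$. So the comparison here is really with the original proofs, not with anything the authors wrote.

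Your open--closed skeleton and the use of Fermi coordinates are indeed the right frame, but the critical openness step as written has a genuine gap, and I do not think it can be patched in the form you describe. First, you invoke ``the stationarity identity $\int \mathrm{div}_S X\,dV = 0$,'' but part (1) assumes only that $V$ \emph{minimizes area to first order}, which for an inward-pointing field $X=\phi\,\partial_t$ ($\phi\ge 0$) yields $\delta V(X)\ge 0$, not equality. Consequently your claimed conclusion that the contribution is ``strictly positive'' is not a contradiction: it is consistent with the hypothesis. To contradict first-order minimality you would need to exhibit an admissible (inward) $X$ with $\delta V(X)<0$. Second, the claimed ``boundary contribution concentrated where $V$ meets $\{t=0\}$'' does not exist in the first-variation identity for a varifold without boundary; and for tangent planes $S\approx T_p(\partial N)$ one computes $\mathrm{div}_S(\phi\,\partial_t)\approx \phi\,\mathrm{div}_S\partial_t + \langle\nabla\phi,\partial_t\rangle_S\approx 0$ (using $H_{\partial N}=0$ and $\nabla\phi$ tangential), so the mass of $V$ sitting on $\partial N$ contributes essentially nothing. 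The real content of White's lemma is that the detachment near the $q_j$ forces the tangent planes of $V$ to tilt off of $\partial N$ there, and a \emph{carefully weighted} radial/normal test field (not a bare bump $\phi\,\partial_t$) converts this tilt into a strictly negative first variation; the minimality of $\partial N$ enters to control the $O(t)$ error from the foliation, not to produce the sign. That sign analysis is exactly the delicate core you have deferred to ``careful accounting,'' and without it the argument does not close. For part (2), setting $W=V\llcorner \Sigma_0$ and $W'=V-W$ is the right decomposition, but you need to justify why $W$ is itself stationary (restrictions of stationary varifolds are not stationary in general); this requires a further maximum-principle argument showing $\mathrm{Spt}(W')$ is a positive distance from $\partial N$, which you do not address. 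Finally, the statement as printed tacitly assumes $\partial N$ is connected (as in White's Theorem 4); your component-by-component reading is the correct interpretation, and the paper applies it that way in its decomposition $V_h=\sum a_i\Sigma_i + W_h$.
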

We remark that being minimal implies minimizing to first order. We further note that point \ref{maxPrinciple1} of theorem \ref{SolWhiteMax} actually holds as long as $V$ minimizes area to first order in \textit{an open neighborhood} of $\partial N$. This follows by noting that the proof \ref{SolWhiteMax} is local to the point $p \in \partial N \cap \text{Spt}(V)$. The authors construct a vector field supported in a neighborhood of $\partial N$ to show that when $V \cap \partial N \ni p$ but no neighborhood of $p$ is contained in $\partial N$, then $V$ is not stationary against a vector field which is supported in a neighborhood of $\partial N$. Such a neighborhood of $\partial N$ can be taken to be arbitrarily small and hence, applies in our setting, where $V_h$ is certainly minimal in a neighborhood of $\Sigma$ due to the compact support of $h$ away from $\Sigma$. Applying theorem \ref{SolWhiteMax} to $V = V_h$, we conclude $V_h = \sum_{i = 1}^m a_i \cdot \Sigma_i + W_h$ as desired, where $\text{supp}(W_h) \cap \Sigma = \emptyset$. By the previous section, $\text{supp}(W_h)$ is a smooth hypersurface.
\begin{remark}
The above argument shows that $\text{dist}(\text{supp}(W_h), \Sigma) > 0$, but there is no quantitative lower bound afforded by the proof of theorem \ref{SolWhiteMax}. Moreover, the above argument does not show that $W_h \subseteq \text{supp}(h)$. However, see the theorems in \S \ref{quantMPSection} of the appendix.
\end{remark}
\subsubsection{\ref{NoSigma}: $a_i = 0$} \label{NoSigmaSection}
To prove \ref{NoSigma}, suppose $a_i > 0$ for some $i$. We first recall a monotonicity formula for manifolds with bounded mean curvature (see \S \ref{MonotonicityForm} in the appendix for the proof):
\begin{lemma} \label{monotonicityLemma}
Let $\eta \in (0, 1)$. Consider a metric $\tilde g$ on $B_3 \subset \R^{n+1}$ such that $\|\tilde g - g_{eucl}\|_{C^1(B_3)} \leq \eta$ and let $b \geq 0$. Then there exist nonnegative real numbers $\mathfrak{c} = \mathfrak{c}(\eta,b)$ and $\mathfrak{a} = \mathfrak{a}(\eta,b)$ such that the following holds. 

Let $V$ be a $n$-dimensional varifold in $B_3$ which has $b$-bounded first variation with respect to $\tilde g$, namely:
\[
|\delta^{\tilde g}V(X)| = \left| \int_{B_3 \times G(n,n+1)}\mathrm{div}^{\tilde g}_SX \,dV(x,S) \right| \leq b \int|X|_{\tilde g}\,d\mu_V, \qquad \text{for any $C^1$ vector field $X$ on $B^3$}.
\]
Then for any $\xi \in B_1$ and for any $0 < \sigma \leq \rho < 1$:
\[
e^{\mathfrak{c}\sigma}\cdot\frac{\|V\|(B(\xi,\sigma))}{\sigma^n} \leq (1+\mathfrak{a})e^{c(1+\mathfrak{a})\rho}\cdot\frac{\|V\|(B(\xi,(1+\mathfrak{a})\rho))}{\rho^n}
\]
where $B(\xi, r)$ denotes the Euclidean ball of radius $r$ centered at $\xi$. 
\end{lemma}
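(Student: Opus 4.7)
The plan is to adapt the classical Euclidean monotonicity formula for varifolds with bounded first variation (see \cite[\S 17]{simon1983lectures}) to the present setting, absorbing both the $C^1$-perturbation of the metric and the $b$-bounded first variation into the final constants $\mathfrak{c}$ and $\mathfrak{a}$.

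First I would fix $\xi \in B_1$ and test the first variation identity against the truncated position vector $X_\rho(y) = \phi(|y-\xi|/\rho)(y-\xi)$, where $\phi \colon \R \to [0,1]$ is a smooth nonincreasing cutoff with $\phi\equiv 1$ on $(-\infty,1/2]$ and $\phi\equiv 0$ on $[1,\infty)$. Writing $r=|y-\xi|$ and $\hat r = (y-\xi)/r$, a direct computation gives the Euclidean tangential divergence on an $n$-plane $S \in G(n,n+1)$ as
\[
\mathrm{div}^{eucl}_S X_\rho = n\,\phi(r/\rho) + (r/\rho)\,\phi'(r/\rho)\,|P_S\hat r|^2,
\]
where $P_S$ denotes orthogonal projection onto $S$. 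Because the Christoffel symbols of $\tilde g$ are pointwise controlled by $\|\tilde g - g_{eucl}\|_{C^1} \leq \eta$, one has the pointwise estimates $|\mathrm{div}^{\tilde g}_S X_\rho - \mathrm{div}^{eucl}_S X_\rho| \leq C\eta\,|X_\rho|$ and $|X_\rho|_{\tilde g} \leq (1+C\eta)r$ on the support of $X_\rho$. Plugging these into the hypothesis $|\delta^{\tilde g}V(X_\rho)| \leq b\int|X_\rho|_{\tilde g}\,d\mu_V$ and letting $\phi$ approach $\mathbf{1}_{[0,1]}$, the nonnegative tangent-plane deviation term $(r/\rho)\phi'(r/\rho)(1-|P_S\hat r|^2)$ can be dropped to obtain a distributional differential inequality of the form
\[
\frac{d}{d\rho}\Bigl[\rho^{-n}\|V\|(B(\xi,\rho))\Bigr] \;\geq\; -\,\mathfrak{c}_0(b+\eta)\,\rho^{-n}\|V\|(B(\xi,\rho)).
\]
Multiplying by the integrating factor $e^{\mathfrak{c}\rho}$ and integrating from $\sigma$ to $\rho$ produces, for a suitable $\mathfrak{c}=\mathfrak{c}(\eta,b)$,
\[
e^{\mathfrak{c}\sigma}\,\sigma^{-n}\|V\|(B(\xi,\sigma)) \leq e^{\mathfrak{c}\rho}\,\rho^{-n}\|V\|(B(\xi,\rho)),\qquad 0<\sigma\leq \rho <1.
\]

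The slightly unusual $(1+\mathfrak{a})$ factor on the right of the statement reflects the passage between $\tilde g$-metric balls and Euclidean balls. The computation above is cleanest when the radial vector field is taken to be radial with respect to $\tilde g$ (so that it is $\tilde g$-orthogonal to $\tilde g$-spheres), and in that formulation the resulting monotonicity holds literally for $\tilde g$-balls. Converting back to Euclidean balls via the containment $B^{\tilde g}(\xi,\rho) \subset B(\xi,(1+\mathfrak{a})\rho)$, valid for some $\mathfrak{a}=\mathfrak{a}(\eta) = O(\eta)$ by the $C^0$-closeness of $\tilde g$ to $g_{eucl}$, introduces precisely the claimed dilation of the outer radius and a multiplicative constant of $(1+\mathfrak{a})$ coming from the comparison of volume densities.

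I expect the main technical obstacle to be the careful bookkeeping required to combine the $b$-term (which, in the classical Allard formula, produces the $e^{b\rho}$ factor) with the $\eta$-error from the metric perturbation, so that both can be absorbed into a single exponential integrating factor and the radius-dilation constant $\mathfrak{a}$ depends only on $(\eta,b)$, uniformly in $\xi \in B_1$ and $\rho < 1$. All of this is routine but finicky, and should closely parallel the Riemannian generalizations of Simon's monotonicity formula used widely in the min-max literature.
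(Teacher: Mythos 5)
Your high-level plan is the right one, and your second paragraph does gesture at the key idea, but your first (and only worked-out) approach has a genuine gap that would prevent the claimed exponent $\rho^{-n}$ from coming out. The pointwise bound you claim, $|\mathrm{div}^{\tilde g}_S X_\rho - \mathrm{div}^{eucl}_S X_\rho|\leq C\eta\,|X_\rho|$, is false: the deviation between the two tangential divergences involves $|DX_\rho|$, not just $|X_\rho|$, because the $\tilde g$-orthonormal frame on $S$ and the $\tilde g$-inner product differ from their Euclidean counterparts by $O(\eta)$ at zeroth order. Concretely, in the identity $\mathrm{div}^{eucl}_S X_\rho = n\phi(r/\rho)+(r/\rho)\phi'(r/\rho)|P_S\hat r|^2$, the first term stays exact in the $\tilde g$-trace (because $\sum_i\tilde g(\tilde e_i,\tilde e_i)=n$ for any $\tilde g$-orthonormal frame), but the second picks up an additive error $O(\eta)\,|r\phi'(r/\rho)/\rho|$ rather than $O(\eta r)\phi$. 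Integrated against $V$, that error equals $O(\eta)\cdot\rho\,\partial_\rho I(\rho)$, so the differential inequality you obtain has the form $nI-(1+O(\eta))\rho I'\leq(b+O(\eta))\rho I$. The $O(\eta)$ perturbation sitting on $\rho I'$ changes the power to $\rho^{-n/(1+O(\eta))}<n$, and since the lemma must hold for arbitrarily small $\sigma/\rho$, the lost factor $(\sigma/\rho)^{n'-n}$ is not absorbable into $(1+\mathfrak a)$. This is precisely what the paper's choice of vector field is designed to avoid.

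The paper, following Song, takes $X=\gamma(r_\xi)h\cdot\tfrac12\tilde\nabla r_\xi^2$, where $r_\xi(x)=\|x-\xi\|_{g_\xi}$ is the distance in the \emph{frozen} constant-coefficient metric $g_\xi=\tilde g(\xi)$, but the gradient is the $\tilde g$-gradient. Because the scalar $r_\xi$ is smooth and the differential of the cutoff contracts against $\tilde\nabla r_\xi$ through $\tilde g$, the coefficient of $\gamma'$ comes out \emph{exactly} as $r_\xi\gamma'(r_\xi)\|\tilde\nabla_Sr_\xi\|_{\tilde g}^2$ with no $\eta$-error; the only error is $-2Cr_\xi\gamma(r_\xi)\|\tilde\nabla r_\xi\|_{\tilde g}^2$, i.e.\ $O(\eta\rho)$ times the bulk term, which \emph{is} absorbable into the exponential. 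Your second paragraph's suggestion (take the vector field $\tilde g$-radial) is morally this, but it needs care for $C^1$ metrics — $\tilde\nabla^2 d_{\tilde g}^2$ is not obviously controlled — whereas the frozen-metric trick sidesteps that because $r_\xi^2$ is a smooth quadratic form and $\tfrac12\tilde\nabla r_\xi^2$ involves only $\tilde g$ itself, not its derivatives. If you want to salvage your draft, replace the Euclidean radial field with $\tfrac12\tilde\nabla r_\xi^2$ and redo the divergence computation; the rest of your outline (drop the nonnegative normal-component term, integrate the differential inequality with an exponential integrating factor, then convert back from $g_\xi$-balls to Euclidean balls to produce $\mathfrak a$) is then correct and matches the paper.
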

\noindent Lemma \ref{monotonicityLemma} implies that our hypersurfaces $Y_{\eps_i,p}$ cannot ``pinch off" and form a copy of $\Sigma_i$ as follows (see figure \ref{fig:nopinching}):
\begin{figure}[h!]
\centering
\includegraphics[scale=0.4]{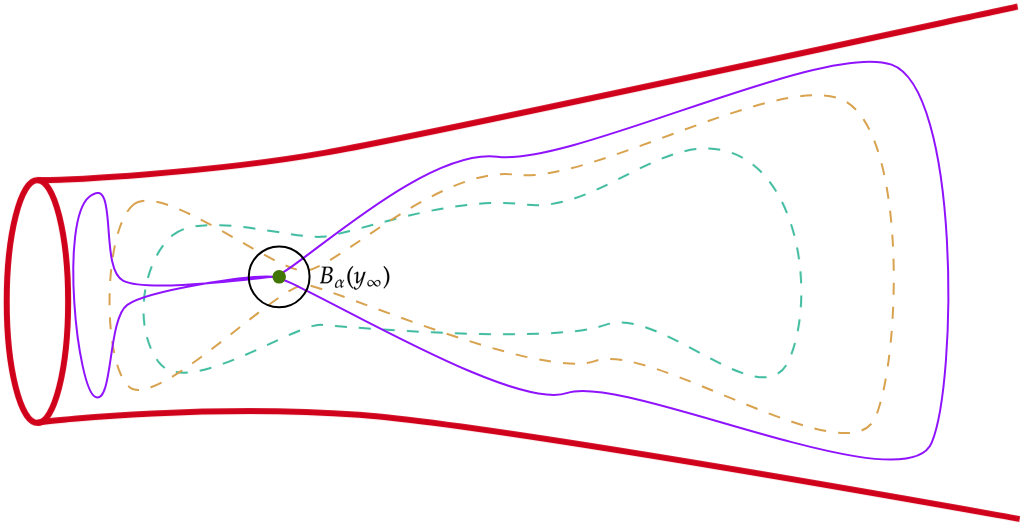}
\caption{A visualization of what can go wrong as $\eps \to 0$. Even though all of the $Y_{\eps,p}$ have uniform diameter bounds, because the metric is modified on $t \in [0, \delta_{\eps}]$ and $\delta_{\eps} \to 0$, the $Y_{\eps,p}$ may accumulate around $\{t = \delta_{\eps}\}$. In the limit, this naively can lead to pinching at a point $y_{\infty}$.}
\label{fig:nopinching}
\end{figure}
\begin{lemma}[No Pinching]\label{NoPinchingLemma}
No component of $Y_{\eps_i,p}$ can converge to a non-trivial varifold supported in the boundary, $\tilde{B}_h$.
\end{lemma}
\begin{proof}
Suppose that a component $Y_{\epsilon_i,p}^*$ of $Y_{\epsilon_i,p}$ converges to 
\begin{equation} \label{componentConverge}
Y_0= \tilde{B}_h +\tilde{W}_h
\end{equation}
where $\text{supp}(\tilde{B}_h) \subseteq \Sigma$ and $\tilde{W}_h$ is a varifold with $\text{supp}(\tilde{W}_h) \cap \Sigma_k = \emptyset$ as in part \ref{MaxPrincipleDecomposition}. This means that we can pick some $\alpha \in (0, \mathrm{dist}(\Sigma,\tilde{W}_h)/4) $ so that the $\alpha$-neighborhoods of $\Sigma$ and of (the support of) $\tilde{W}_h$ with respect to the (Lipschitz) metric $g_\infty$ do not cover $Y_{\epsilon_i,p}^*$. In other words, for each $i$, there is a $y_i \in Y_{\epsilon_i,p}^*\subset U_{\epsilon_i}$ such that
\[ \mathrm{dist}^{g_\infty}(y_i,\Sigma) \geq 2\alpha \qquad \text{and} \qquad  \mathrm{dist}^{g_\infty}(y_i,\tilde{W}_h) \geq 2\alpha.\]
(see figure \ref{fig:nopinching} for the visualization). Passing to a subsequence, if necessary, we may assume  $y_i \to y \in \mathring{M}$ where $y$ also satisfies the inequalities above. In particular, we may assume that the closure of $B^{g_\infty}_\alpha(y)$ does not intersect $\Sigma \cup \tilde{W}_h$, and hence
\[a_k\mathcal{H}_{g_\infty}^{n}(\Sigma \cap B^{g_\infty}_\alpha(y)) + \|\tilde{W}_h\|(B^{g_\infty}_\alpha(y))=0\]
Let $\rho \in (0,\alpha)$ be such that lemma \ref{monotonicityLemma} can be applied to $g_\infty$-balls of radius $\leq \rho$. In particular, for sufficiently large $i$, we get $\|Y_{\epsilon_i,p}^*\|(B_\rho^{g_\infty}(y)) \geq C$ for some $C>0$ independent of $i$. We may also assume that $B_{\rho}^{g_\infty}(y_i)\subset B_\alpha^{g_\infty}(y)$ for sufficiently large $i$. Nevertheless this leads to a contradiction, as: 
\[0 =a_k\mathcal{H}_{g_\infty}^{n}(\Sigma_k \cap B^{g_\infty}_\alpha(y)) + \|\tilde{W}_h\|(B^{g_\infty}_\alpha(y))= \lim_i \|Y_{\epsilon_i,p}^*\|(B^{g_\infty}_\alpha(y))\geq \liminf_i\|Y_{\epsilon_i,p}^*\|(B^{g_\infty}_\rho(y))\geq C > 0\]
\end{proof}
\subsubsection{\ref{NoMinimalPart}: Almost embedded, multiplicity $1$, PMC } \label{noMinimalSection}
To prove part \ref{NoMinimalPart}, we first decompose $Y_\infty=\mathrm{Spt}(V_h)$ as
\[
Y_{\infty} = Y_{ngbd} + W_h^*
\]
where $Y_{ngbd}$ denotes all (if there are any) connected components which are contained in $\Sigma \times [0, \hat{t}]$, i.e. the contracting neighborhood. If we consider 
\[
t_{max} = \sup_{p \in Y_{ngbd}} t(p) = t(p_0) \qquad p_0 \in Y_{ngbd}
\]
then noting that $|h(p_0)| < H_t(p_0)$ in our contracting neighborhood by property $2$ from \S \ref{hCompactSupport}, we get a contradiction via the maximum principle as in the proof of proposition \ref{tetherProp}. \nl
\indent It suffices to show that $W_h^*$ is almost-embedded, multiplicity $1$, and has density at most $2$ everywhere. By condition \ref{goodnessAssumption} on $h$, we have that $W_h^*$ restricted to $M_{s}$ (for $s \leq \hat{t}$) satisfies the generic regularity conclusions of theorem \ref{thm:compactness with changing metrics}, and so $W_h^*$ is realized by an almost-embedded with optimal regularity, multiplicity $1$ PMC (with boundary) in this region - i.e. the touching set restricted to $\{t \geq s\}$ will be at most Hausdorff dimension $(n-1)$. By theorem \ref{densityTheorem}, the density of any non-minimal component of $W_h^*$ is at most $2$. So it suffices to show that any minimal components of  $W_h^*$ are multiplicity $1$. This is true for any such components lying in $\{t \geq s\}$. Moreover, by construction and the tethering argument of proposition \ref{tetherProp}, every component of $W_h^*$ leaves $\Sigma \times [0, \hat{t}]$, so there are no minimal components contained inside $\Sigma \times [0, s]$. Thus the multiplicity of each component of $W_h^*$ (as almost embedded hypersurfaces) is $1$ and the touching set can only have n-Hausdorff dimension in $W_h^* \cap \{t \leq s\}$.
\subsection{Conclusion of proof of \ref{mainTheorem}} \label{conclusionSection}
With this, we conclude the proof of theorem \ref{mainTheorem} as follows:
\begin{itemize}
\item For each $p \in \mathbb{Z}^+$, we construct an almost embedded, multiplicity one PMC surface, $Y_{h,p}$, with $H_{Y_{h,p}} = h$. 
\item We compute its area as
\begin{align*}
A(Y_{h,p}) &= \lim_{i \to \infty} A(Y_{\eps_i, p}) \\
A(Y_{h,p}) & \in \left[ (p+1) \cdot A(\Sigma_1) - 2 \|h\|_{L^1}, p \cdot A(\Sigma_1) + W_0(M,g) + A(\Sigma) + 2\|h\|_{L^1(M,g)} + C p^{1/(n+1)} \right]
\end{align*}
%
where $C = C(M, g)$. Here, we used \eqref{LimitVarifoldMass} in the second line, the convergence of the widths $\omega_{p, \eps} \to \omega_p(\Cyl(U), g_{\Cyl})$ from remark \ref{ConvergenceOfWidths}, and the bounds of $\omega_p(\Cyl(U), g_{\Cyl})$ from theorem \ref{CylindricalWeylThm}.
%
\item $\mathrm{Index}(\mathcal{R}(Y_{h,p})) \leq p+1$ by construction.
\end{itemize}
Note that though $\{Y_{h,p}\} \Big|_{t \leq \hat{t}}$ may have points with multiplicity $2$, each $Y_{h,p}$ decomposes into a collection of almost-embedded, connected components, $Y_{h,p} = \cup_{k} Y_{h,p}^k$, with $Y_{h,p}$ as a whole being almost embedded with density at most $2$. Each $Y_{h,p}^k$ leaves the neighborhood $\Sigma \times [0, \hat{t}]$ again by proposition \ref{tetherProp}, and the restricted components, $Y_{h,p}^{k} \cap \{t \geq \hat{t}\}$ are all pairwise distinct. Thus, each $Y_{h,p}^k$ occurs with multiplicity $1$ in the decomposition of $Y_{h,p}$. Moreover, each $Y_{h,p}^k$ is itself almost embedded. Now by the linear growth of $A(Y_{h,p})$, we see that there must be infinite distinct, almost embedded, $h$-PMCs, which have density at most $2$. \nl 
%
\indent To see the index bound, we present a modification of the argument of \cite[Thm 2.6, Part 2]{ZhouMultiplicity} to localize the variations to the regular set. Suppose that $\mathrm{Index}(\mathcal{R}(Y_{h,p})) = I > p + 1$. Then there exist $\{\phi_{i}\}_{i = 1}^I \in C_{c}^{\infty}( \mathcal{R}(Y_{h,p}))$ such that $\delta^2 \AA^h$ is negative definite on $V = \text{Span}\left(\{\phi_i\}_{i = 1}^I \right)$. Let $K = \cup_{i = 1}^I \text{supp}(\phi_i)$ and note that $K \subset \subset \mathcal{R}(Y_{h,p})$. Let $U$ be a tubular normal neighborhood of $K$ such that $U \supseteq \{p \in M \; | \; \dist(p, K) \leq \delta\}$ for some $\delta > 0$. Via the same construction as in \cite[Prop 4.3]{marques2015morse}, $V$ induces a family of diffeomorphisms $\{F_v \; : \; v \in \overline{B}^I\} \subseteq \text{Diff}(U)$ such that
%
\[
-\frac{1}{c_0} Id \leq D^2 \AA^h(F_v(\Omega_p \cap U)) \leq - c_0 Id
\]
for some $c_0 \in (0,1)$. Since $Y_{\eps_i} \cap U$ converges smoothly to $Y_{h,p} \cap U$ away from finitely many points, the corresponding restrictions of the underlying Cacciopoli sets $\Omega_{\eps_i} \cap U$ converge  to $\Omega_p \cap U$ in the $\mathbf{F}$-norm in $\CC(U)$. Thus the map $v \mapsto \AA^h (F_v(\Omega_{\eps_i} \cap U))$ converges \textit{smoothly} $v \mapsto \AA^h(F_v(\Omega_{\eps_i} \cap U))$ and hence 
\[
-\frac{2}{c_0} \leq D^2 \AA^h(F_v(\Omega_{\eps_i} \cap U)) \leq - \frac{c_0}{2} Id
\]
for all $v \in \overline{B}^I$, contradicting the upper weak index bounds of $Y_{\eps_i}$. 
\section{Proof of theorem \ref{ZeroBoundaryTheorem}: Extension to $h \not \in C_c^{\infty}(M \backslash \Sigma)$} \label{ExtensionSection}
In this section, we prove Theorem \ref{ZeroBoundaryTheorem}, showing that we can extend and improve our construction of PMCs to good functions $h \in C^{\infty}(M)$ with vanishing boundary conditions \ref{hZeroBoundary}.
\begin{theorem*}
Suppose that $(M^{n+1}, g)$, $3 \leq n+1 \leq 7$, is a manifold with boundary so that $\Sigma = \partial M$ is an embedded, strictly stable minimal surface. For any $h$ satisfying Assumptions \ref{hZeroBoundary}, there exist infinitely many distinct, almost embedded with optimal regularity, \emph{multiplicity one} hypersurfaces, $\{Y_{h,p}\}_{p=1}^{\infty}$, with mean curvature $h$ and area bounds \eqref{AreaBounds}. Moreover, each $Y_{h,p}$ is disjoint from $\Sigma$ and $\mathrm{Index}(\mathcal{R}(Y_{h,p}))$.
\end{theorem*}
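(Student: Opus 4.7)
The plan is to obtain each $Y_{h,p}$ as a subsequential limit (as $k\to\infty$) of the PMCs $Y_{h_k,p}$ produced by Theorem \ref{mainTheorem} applied to an approximating sequence $h_k\in C_c^\infty(M)$ satisfying Assumption \ref{hCompactSupport}. Fixing a smooth cutoff $\chi\colon[0,\infty)\to[0,1]$ with $\chi\equiv 0$ on $[0,1/2]$ and $\chi\equiv 1$ on $[1,\infty)$, I take $h_k^0(x)=\chi(k\,t(x))h(x)$ inside the collar $\Sigma\times[0,\hat{t}]$ and $h_k^0=h$ elsewhere; Sard's theorem applied to $t|_{\Sigma'}$ together with a $C^\infty$-small perturbation supplied by Lemma \ref{lem:Sg contains open dense} yields $h_k$ satisfying the goodness condition of Assumption \ref{hCompactSupport}(4) on $M_{s_k}$ for some $s_k\in(0,1/k]$. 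Because $h$ and its first three normal derivatives vanish along $\Sigma$, we have $h_k\to h$ in $C^3(M)$ and smoothly on compact subsets of $M\setminus\Sigma$, while $\|h_k\|_{L^1}<A(\Sigma_1)/2$ and $|h_k|_{\Sigma\times\{t\}}<H_t$ pass from $h$ to $h_k$ uniformly for $k$ large, so Theorem \ref{mainTheorem} applies to each $h_k$.

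For each fixed $p$, the PMCs $Y_{h_k,p}\subset M$ satisfy the area bounds \eqref{AreaBounds} and $\mathrm{Index}(\mathcal{R}(Y_{h_k,p}))\leq p+1$ uniformly in $k$, and their induced varifolds have uniformly bounded first variation. A subsequential varifold limit $V_{h,p}$ therefore exists by the standard compactness of varifolds with bounded first variation. Applying Theorem \ref{thm:compactness for FPMC} with $h_k\to h$ smoothly on compact subsets of $M\setminus\Sigma$, the restriction of $V_{h,p}$ to $M\setminus\Sigma$ is represented by an almost-embedded $h$-PMC $Y_{h,p}$, with the convergence $Y_{h_k,p}\to Y_{h,p}$ locally smooth and graphical away from finitely many points.

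The main obstacle is to rule out any mass of $V_{h,p}$ on $\Sigma$, which I handle in two steps mirroring Section \ref{EpsToZero}. First, no component of $V_{h,p}$ can be a positive integer multiple of some $\Sigma_i$: the rightmost-point maximum principle against the leaves $\Sigma\times\{t\}$ (valid since $|h_k|_{\Sigma\times\{t\}}<H_t$) forces each connected component of $Y_{h_k,p}$ to contain a point with $t\geq\hat{t}$ exactly as in Proposition \ref{tetherProp}, and Lemma \ref{monotonicityLemma} then pins a uniform positive amount of mass in a ball of fixed radius around that tethered point, excluding convergence onto any $a_i\Sigma_i$ exactly as in Lemma \ref{NoPinchingLemma}. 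Second, the closure of $Y_{h,p}$ cannot touch $\Sigma$: at a touching point $q\in\Sigma$ one has $h(q)=0$, so both $\Sigma$ and $Y_{h,p}$ are minimal at $q$; standard curvature estimates for almost-embedded PMCs with $h\in C^3(M)$ allow $Y_{h,p}$ to extend as a $C^{1,\alpha}$ hypersurface to $q$, and since $Y_{h,p}\subset M$ its tangent cone at $q$ must equal $T_q\Sigma$. The strong maximum principle for minimal hypersurfaces then forces $Y_{h,p}$ to coincide with $\Sigma_i$ locally at $q$, contradicting the no-pinching step already established.

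With $\mathrm{supp}(V_{h,p})\cap\Sigma=\emptyset$, the everywhere-goodness of $h$ on $M\setminus\Sigma$ (Assumption \ref{hZeroBoundary}(4)) combines with conclusion \ref{compactness thm:generic multiplicity one convergence} of Theorem \ref{thm:compactness for FPMC} and Theorem \ref{densityTheorem} to upgrade $Y_{h,p}$ to an almost-embedded, multiplicity-one $h$-PMC with optimal regularity, i.e.\ with touching set of Hausdorff dimension at most $n-1$. The area bounds \eqref{AreaBounds} pass to the limit by the locally smooth graphical convergence and $\|h_k\|_{L^1}\to\|h\|_{L^1}$, and the weak index bound $\mathrm{Index}(\mathcal{R}(Y_{h,p}))\leq p+1$ is inherited from $Y_{h_k,p}$ by localizing unstable test variations to $\mathcal{R}(Y_{h,p})\subset M\setminus\Sigma$ exactly as in Section \ref{conclusionSection}. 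Finally, the linear-in-$p$ lower bound $(p+1)A(\Sigma_1)-2\|h\|_{L^1}$ in \eqref{AreaBounds} forces infinitely many of the $Y_{h,p}$ to be pairwise distinct, and a diagonal selection of subsequences over $p$ produces the whole family simultaneously.
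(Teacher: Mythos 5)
Your plan mirrors the paper's proof very closely: approximate $h$ by compactly supported $h_k\to h$ in $C^3$ using the fourth-order vanishing along $\Sigma$; apply Theorem~\ref{mainTheorem} to each $h_k$; take a varifold limit; split off the $\Sigma_i$-components by a tethering-plus-monotonicity (no-pinching) argument; rule out any remaining touching by a maximum principle at the boundary; and then use the goodness of $h$ on $M\setminus\Sigma$ to obtain multiplicity one, optimal regularity, preserved index, and distinctness. Two minor points where your write-up is looser than (or slightly different from) the paper's proof:

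First, your construction of $h_k$ invokes Sard's theorem and Lemma~\ref{lem:Sg contains open dense} to arrange transversality of $\Sigma'$ to $\{t=s_k\}$, but this is unnecessary: by Assumption~\ref{hZeroBoundary}(\ref{goodnessAssumption2}) the set $\Sigma'$ is a closed hypersurface in $M\setminus\Sigma$, hence at positive distance from $\Sigma$, so for $k$ large $\Sigma'$ lies strictly in the interior of $M_{s_k}$ and the transversality condition is vacuous. The paper's straight cutoff $h_i = h\cdot\varphi_{\delta_i}(\mathrm{dist}(\cdot,\Sigma))$ suffices with no perturbation; the perturbation risks disturbing the $L^1$, pointwise, and sign conditions that you need to hold exactly.

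Second, in your step ruling out $\overline{Y_{h,p}}\cap\Sigma\neq\emptyset$ you deduce that the tangent cone of $Y_{h,p}$ at a touching point $q\in\Sigma$ must equal $T_q\Sigma$ because $Y_{h,p}\subset M$ lies on one side of $\Sigma$. This is only valid once you know $Y_{h,p}$ has no boundary at $q$; a priori the limit could meet $\Sigma$ transversally as a free-boundary hypersurface, whose tangent cone is a half-plane. The paper closes this loophole explicitly: by viewing the $Y_{i,p}$ (which Theorem~\ref{mainTheorem} gives as \emph{closed} PMCs strictly inside $M$) as closed PMCs in a slight extension of $M$ across $\partial M$ and applying Zhou's compactness~\cite[Thm~2.6]{ZhouMultiplicity}, one sees that $Y_{h,p}$ is closed, so only tangential touching can occur. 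Once this is added, your argument and the paper's ``by the maximum principle, $W_h\cap\Sigma=\emptyset$'' are the same, modulo your choice to localize index test-variations away from $\Sigma$ rather than invoking the preservation of weak index from Theorem~\ref{thm:compactness for FPMC}(\ref{compactness thm:weak index bound}); both yield the bound.
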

\begin{proof}
As $\Sigma$ is strictly stable, there exists a contracting neighbood, $U \supseteq \Sigma$, and a corresponding map $\Phi: \Sigma \times [0, \hat{t}] \to U$. As described at the end of \S \ref{MaxPrinciple}, we can make $\hat{t}$ potentially smaller (but non-zero), to enforce that 
\[
|h(s,t)| \leq H_t(s) \qquad \forall (s,t) \in \Sigma \times [0, \hat{t}]
\]
Now, consider a sequence $h_i$ with compact support satisfying Assumptions \ref{hCompactSupport} and such that $\|h_i\|_{C^2}(M)$ is uniformly bounded and $h_i \xrightarrow{C^{1}(M)} h$. 
We can construct such sequence by multiplying by a bump function and using the high vanishing order of $h$ at the boundary. Formally, consider 
\[
h_i(p) = h(p) \varphi_{\delta_i}(\text{dist}(p, \Sigma))
\]
where $\varphi(t)$ is a smooth bump function that is identically $0$ for $t \leq \frac{1}{2}$, goes from $0 \to 1$ on $[\frac{1}{2},1]$, and is identically $1$ for $t \geq 1$. Then let $\varphi_{\delta}(t) = \varphi(t/ \delta)$. Clearly $h_i \to h$ smoothly around all points with $\dist(p, \Sigma) > \delta_0$ for some $\delta_0 > 0$ fixed. For all $t < \delta_0$ and $\delta_i$ sufficiently small, we let $(s,t) \in \Sigma \times [0, \delta_0]$ be local coordinates for $M$. Since $h \Big|_{\Sigma} = \p_{\nu} h \Big|_{\Sigma} = 0$, we have 
\begin{align} \nonumber
h_i(s,t) &= \varphi_{\delta_i}(t) \cdot h \\ \label{hExpansion}
h(s,t)  &= t^2 \cdot h_{t,2}(s) + R(s,t) 
\end{align}
where $R(s,t) = O(t^3)$ is the remainder. Note that for $k \geq 1$,
\begin{align*}
t \in [0, \delta_i] & \implies |\p_t^k \varphi_{\delta_i}(t)| \leq \delta_i^{-k} \cdot C_k \\
t > \delta_i & \implies |\p_t^k \varphi_{\delta_i}(t)| = 0
\end{align*}
where $C_k$ is independent of $\delta_i$. We now bound $h_i - h$ on $t \in [0, \delta_i]$ by noting that
\begin{align*}
\sup_{(s,t) \in \Sigma \times [0,\delta_i]} |D(h_i - h)(s,t)| & \leq K \Big[ |\p_t \varphi_{\delta_i}(t)| \cdot |h(s,t)| + |1- \varphi_{\delta_i}(t)| \cdot |D h(s,t)| \Big] \\
& \leq K [ \delta_i^{-1} \cdot \delta_i^2 + 2 \cdot \delta_i]  \leq K \delta_i
\end{align*}
and
\begin{align*}
\sup_{(s,t) \in \Sigma \times [0,\delta_i]} |D^2(h_i - h)(s,t)| & \leq K \Big[ |\p_t^2 \varphi_{\delta_i}(t)| \cdot |h(s,t)| + |\p_t \varphi_{\delta_i}(t)| \cdot |D h(s,t)|  + |(1 - \varphi_{\delta_i}(t))| \cdot |D^2 h(s,t)| \Big] \\
& \leq K [ \delta_i^{-2} \cdot \delta_i^2  + \delta_i^{-1} \delta_i + \delta_i^{-1} \delta_i^2 + 2 ] \leq K
\end{align*}
having used \eqref{hExpansion} extensively. A similar (easier) bound works for the $0$-th order convergence. Since $h_i \equiv h$ outside of $\Sigma \times [0,\delta_i]$, this finishes the proof of convergence in $C^1$. By the interpolation inequality for $C^2 \subset C^{1,\alpha} \subset C^1$, we obtain $h_i \to h$ in $C^{1,\alpha}(M)$, for any $\alpha \in (0,1)$. \nl 
%
\indent From construction it is clear that the $h_i$ satisfy parts \ref{LOneBound}, \ref{LessThanSlice}, and \ref{compactPositiveIntegral} for all $i$ sufficiently small, since $h$ satisfies each of these properties. To see that condition \ref{goodnessAssumption} is satisfied, let $\tilde{t}_i = 2 \delta_i$. Then for $i$ sufficiently large (and hence $\delta_i$ sufficiently small, we see that 
\begin{align*}
M_{\tilde{t}_i} &= U^c \cup \{p \; | \; t(p) \geq \tilde{t}_i\} \\
h_i \Big|_{M_{\tilde{t}_i}} &= h \Big|_{M_{\tilde{t}_i}}
\end{align*}
So clearly $h_i$ is morse on $M_{\tilde{t}_i}$. By condition \ref{goodnessAssumption2} on $h$, we know that 
\[
\Sigma'(h_i) = \{h_i = 0\} \cap \{t \geq \tilde{t}_i\} = \{h = 0\} \cap \{t \geq \tilde{t}_i\} = \Sigma'(h)
\]
where $\Sigma'(h)$ is the closed, smoothly embedded hypersurface in $M \backslash \Sigma$, which is thus contained in the interior of $M_{\tilde{t}_i}$ for $\tilde{t}_i$ sufficiently small. Thus $\Sigma'(h_i)$ is automatically transverse to $\{t = \tilde{t}_i\}$, and the mean curvature vanishes to finite order by the assumption on $\Sigma'$ from \ref{goodnessAssumption2}. \nl 
\indent For fixed $p$ and each $h_i$, we construct an almost-embedded $h_i$-PMCs, $Y_{i,p}$ from theorem \ref{mainTheorem}. Moreover, in the process of the construction, we actually construct $Y_{i,p,\eps_i}$, where $Y_{i,p,\eps_i}$ is an $h_{\eps_i}$ PMC on $(U_{\eps_i}, g_{\eps_i})$. Moreover, $\eps_i$ is arbitrarily small, $h_{\eps_i}$ is an arbitrarily close approximation of $h_i$, and $Y_{i,p,\eps_i}$ can be taken to be graphical over $Y_{i,p}$ with an arbitrarily small graphical function, outside a set of finitely many points. Via theorem \ref{thm:compactness with changing metrics} and in particular, conclusion \ref{compactness thm:smooth limit}, we send $i \to \infty$ and construct $V_{h,p}$ the limit varifold with $Y_{h,p} = \text{supp}(V_{h,p})$ a smooth $h$-PMC with $Y_{i,p, \eps_i}$ converging to $Y_{h,p}$ graphically away from at most $p$ points - this follows from the compactness for PMCs with mean curvature satisfying uniform $C^1$ bounds (see \ref{thm:compactness for FPMC} and remark \ref{C1 alpha convergence}). Note moreover, that $Y_{h,p}$ is closed, as opposed to free boundary with non-trivial boundary. This follows by  applying Zhou's original compactness theorem for PMC surfaces with bounded index, \cite[Theorem 2.6]{ZhouMultiplicity}: consider each $Y_{i,p}$ as closed PMC surfaces inside $M$ (or a slight extension of $M$ beyond the boundary), and hence each $Y_{i,p, \eps_i}$ as closed PMCs inside of $M$ to conclude. Thus $Y_{h,p} = \lim_{i \to \infty} Y_{i,p,\eps_i}$ is also closed inside $M$. In particular, $Y_{h,p}$ will not be free boundary unless (vacuously) $Y_{h,p} \cap \Sigma = \emptyset$.  \nl 
\indent Now decompose
\[
V_{h,p} = B_h + W_h
\]
where $\text{supp}(B_h) \subseteq \Sigma$ and $\text{supp}(W_{h}) \cap \Sigma = \emptyset$ by the maximum principle (which applies as $Y_{h,p} = \text{supp}(V_{h,p})$ is now a hypersurface). To see that $B_h = 0$, we argue as in proposition \ref{PMCConstruction}, part \ref{NoSigma}: if $Y_{i,p} \xrightarrow{i \to \infty} B_h + W_h$, then some connected component of $Y_{i,p}$, call it $Y_{i,p}^*$, converges to a varifold, $\tilde{B}_h$, supported in $\Sigma$. However, by proposition \ref{tetherProp}, each $Y_{i,p}^*$ contains a point $q \in Y_{i,p}^*$ such that $\dist(q, \Sigma) \geq \hat{t}$. Thus the same ``no-pinching" argument of lemma \ref{NoPinchingLemma} applies, meaning that $Y_{i,p}$ cannot converge to a varifold supported in $\Sigma$ (plus some other components), i.e. $B_h = 0$. \nl 
\indent We thus conclude that $Y_{i,p} \rightarrow Y_{h,p}$, where $Y_{h,p}$ is bounded away from $\Sigma$. Moreover, because $h$ satisfies restrictions \ref{hZeroBoundary} (in particular assumption \ref{goodnessAssumption2}), we argue that $W_h$ is multiplicity $1$ and almost embedded with optimal regularity. To see this, note that for $i$ sufficiently large, the smooth convergence away from finitely many points given by theorem \ref{thm:compactness for FPMC} (and the fact that $Y_{h,p}$ is compact) guarantees that there exists a $t_0 > 0$ such that $\dist(Y_{i,p}, \Sigma) > t_0$ for all $i$ sufficiently large. Taking $i$ potentially even larger, we guarantee that for $M_{t_0} = \{\dist(p) > t_0\}$,
\[
h_i \Big|_{M_{t_0}} = h
\]
But by the goodness assumption of \ref{goodnessAssumption2}, we see that any convergence of PMCs in $M_{t_0}$ then occurs with multiplicity $1$. \nl 
\indent Note that the above argument implicitly shows that each $Y_{i,p}$ is also multiplicity $1$ for $i$ sufficiently large, and hence $Y_{i,p} = \partial \Omega_{i,p}$ for some Caccioppoli set, as $Y_{i,p}$ itself is a limit of boundaries converging with multiplicity $1$ (see also \cite[Thm 6.3]{simon1983lectures}, or \cite[Prop 7.3]{ZhouZhu}). Since we have graphical convergence with multiplicity one away from finitely many points of $Y_{h,i} \xrightarrow{i \to \infty} Y_p$, the underlying Caccioppoli sets also converge, i.e. $\Omega_{i,p} \xrightarrow{i \to \infty} \Omega_p$ in the $\mathbf{F}$ topology, where $Y_{h,p} = \partial \Omega_p$. In particular, the weak index bound is preserved by part \ref{compactness thm:weak index bound} of theorem \ref{thm:compactness for FPMC}, i.e. $\mathrm{index}_w(Y_{p})\leq p+1$, so by equation \eqref{weakIndexUpper}, we have that $\mathrm{Index}(\mathcal{R}(\Sigma)) \leq p+1$ as well. \nl 
\indent To see that there are infinitely many distinct $Y_{h,p}$, we note that the min-max values are preserved, i.e. if $Y_{i,p} = \partial \Omega_i$, then
\begin{align*}
\lim_{i \to \infty} \AA^{h_i}(\Omega_{p,i}) &= \AA^h(\Omega_p) \\
\AA^{h_i}(\Omega_{p,i}) &
\in \left[ \omega_{p+1}(\Cyl(M)) - \|h_i\|_{L^1}, \omega_{p}(\Cyl(M)) +W_0(M,g) + A(\Sigma) +\|h_i\|_{L^1(M,g)}\right] \\ 
\implies \AA^h(\Omega_p) &\in \left[ \omega_{p+1}(\Cyl(M)) - \|h\|_{L^1}, \omega_{p}(\Cyl(M)) +W_0(M,g) + A(\Sigma) + \|h\|_{L^1(M,g)}\right] \\
\implies A(Y_{h,p}) &\in \left[ (p+1) \cdot A(\Sigma_1) - 2 \|h\|_{L^1}, p \cdot A(\Sigma_1) + W_0(M,g) + A(\Sigma) + 2\|h\|_{L^1(M,g)} + C p^{1/(n+1)} \right]
\end{align*}
Again having used the bounds in theorem \ref{CylindricalWeylThm}. Thus infinitely many of these intervals are distinct and we conclude that infinitely many of the $Y_p = \partial \Omega_p$ is also distinct.
\end{proof}
%

\section{Closed Manifolds: Proof of Theorem \ref{mainTheoremClosed} and Corollaries \ref{mainCorollaryHomology} \ref{mainCorollaryFrankel}} \label{corollariesSection}

In this section, we will prove the existence of infinitely many PMCs in the closed setting, as described in theorem \ref{mainTheoremClosed} and corollaries \ref{mainCorollaryHomology}, \ref{mainCorollaryFrankel}.
\begin{proof}[Proof of Theorem \ref{mainTheoremClosed}]
It is assumed that $\Sigma^n \subseteq M^{n+1}$ is strictly stable, so its corresponding lift $\tilde{\Sigma} \subseteq \comp(M \backslash \Sigma)$ is also strictly stable and has a contracting neighborhood. Thus, we can apply theorems \ref{mainTheorem} and \ref{ZeroBoundaryTheorem} for prescribing function $h \in C^{\infty}$ which satisfy conditions \ref{hCompactSupport} or \ref{hZeroBoundary} on $\comp(M \backslash \Sigma)$. \nl 
\indent We note that if $\Sigma$ happens to be separating in $M$, i.e. $M = M^+ \sqcup_{\Sigma} M^-$, then we can apply theorems \ref{mainTheorem} and \ref{ZeroBoundaryTheorem} in the component which contains the contracting neighborhood. This is a weaker condition than $\partial \comp(M \backslash \Sigma)$ having a contracting neighborhood.
\end{proof}
\begin{proof}[Proof of corollary \ref{mainCorollaryHomology}]
For $g$ a bumpy metric on $M$, minimizing area in a non-trivial homology class $\alpha \in H_n(M, \Z_2)$ produces a smooth, embedded stable minimal surface $\Sigma$ (see e.g. \cite[\S 5.1.6]{federer2014geometric}). Let $\tilde{\Sigma} = \partial \comp(M \backslash \Sigma)$. Since $\Sigma$ represents a non-trivial homology class, $\Sigma$ is not separating, and $\comp(M \backslash \Sigma)$ is a compact manifold with boundary equal to either two copies of $\Sigma$ (when $\Sigma$ is two-sided) or one copy of the double cover of $\Sigma$ (when $\Sigma$ is one-sided), see figure \ref{fig:metriccompletion}. Under the assumption of the corollary, if $\Sigma$ is degenerate, then $\tilde{\Sigma}$ admits a contracting neighborhood. Note that $\tilde{\Sigma}$ automatically admits a contracting neighborhood when $\Sigma$ is strictly stable, as strict stability passes to $\tilde{\Sigma}$ and one can construct a contracting neighborhood on $\tilde{\Sigma} \times [0, \hat{t}] \subseteq \comp(M \backslash \Sigma)$, see e.g. \cite[\S 3]{song2018existence}. \nl 
\indent Consider $h$ satisfying the assumptions in \ref{hCompactSupport} or \ref{hZeroBoundary} on $\comp(M \backslash \Sigma)$. We apply theorems \ref{mainTheorem} (or \ref{ZeroBoundaryTheorem}) to produce infinitely many smooth, distinct, almost embedded, PMCs, $\{Y_{h,p}\}$, on $\text{comp}(M \backslash \Sigma)$, where $Y_{h,p}$ is closed in $\comp(M \backslash \Sigma)$ and has the appropriate density depending on assumption \ref{hCompactSupport} vs. \ref{hZeroBoundary}. Moreover, each $Y_{h,p}$ is disjoint from $\tilde{\Sigma}$. Since $\comp(M \backslash \Sigma) \backslash \tilde{\Sigma}$ is isometrically diffeomorphic to $M \backslash \Sigma$ and each $Y_{h,p} \subseteq \comp(M \backslash \Sigma) \backslash \tilde{\Sigma}$, each $Y_{h,p}$ has the same regularity, multiplicity, and almost embeddedness as a closed PMC in $M$. See figures \ref{fig:metriccompletion}, \ref{fig:hnpmc} for visualizations.
\end{proof}
\begin{proof}[Proof of corollary \ref{mainCorollaryFrankel}]
%
\noindent Note that if $H_n(M, \Z_2) \neq 0$, then we apply corollary \ref{mainCorollaryHomology} with the corresponding assumptions on the homology minimizer, $\Sigma$. So WLOG, assume $H_n(M, \Z_2) = 0$. \newline 
\indent The existence of an embedded, strictly stable minimal surface follows as in \cite[Lemma 12, (1)]{song2018existence} (see also \cite[Thm 9.1]{meeks2008stable}), which we include: if $(M, g)$ does not satisfy the Frankel property, then there exist two disjoint connected minimal hypersurfaces, $\Gamma, \Gamma'$. If either $\Gamma, \Gamma'$ are stable, then by the assumptions of the corollary, both are strictly stable and we are done. Thus, assume that both hypersurfaces are unstable. \nl 
\indent Define $N$ to be a component of $\comp(N \backslash (\Gamma \sqcup \Gamma'))$ which has at least two different new boundary components coming from $\Gamma, \Gamma'$. Let $S_0$ be a component of $\partial N$ coming from $\Gamma$. We now minimize the n-volume in the homological class of $S_0$ inside of $N$. Again, by \cite[\S 5.1.6]{federer2014geometric} and the maximum principle with $S_0$ as a barrier, minimization in this homology class yields a smoothly embedded, strictly stable minimal hypersurface such that one of its components is two-sided and contained in the interior of $N$. Call this component $\Sigma$ and note that since $\Sigma$ arises via minimization in homology, it admits a contracting neighborhood. \newline 
\indent Now we can apply theorems \ref{mainTheoremClosed} on $M$ to yield infinitely many distinct, almost embedded PMCs, $\{Y_{h,p}\}$ when $h$ satisfies conditions \ref{hCompactSupport} or \ref{hZeroBoundary}. \nl 
\indent Note that because $H_n(M, \Z_2) = 0$, $\Sigma$ is separating i.e., $M = M^+ \sqcup_{\Sigma} M^-$. Thus, we could have also applied theorem \ref{mainTheorem} to either component, $M^{\pm}$ (see figure \ref{fig:dumbbell}), to yield infinitely many distinct, almost embedded PMCs when $h$ satisfies conditions \ref{hCompactSupport}. 
\end{proof}
%
%
%
\section{Appendix}
\subsection{An almost-Euclidean monotonicity formula for varifolds with bounded first variation} \label{MonotonicityForm}
In this section, we prove lemma \ref{monotonicityLemma}, which is a generalization of \cite[Lemma 2]{song2018existence}. See also \cite[Thm 3.17]{simon1983lectures} for a similar theorem.
\begin{lemma}
Let $\eta \in (0, 1)$. Consider a metric $\tilde g$ on $B_3 \subset \R^{n+1}$ such that $\|\tilde g - g_{eucl}\|_{C^1(B_3)} \leq \eta$ and let $b \geq 0$. Then there exist nonnegative real numbers $\mathfrak{c} = \mathfrak{c}(\eta,b)$ and $\mathfrak{a} = \mathfrak{a}(\eta,b)$ such that the following holds. 

Let $V$ be a $n$-dimensional varifold in $B^3$ which has $b$-bounded first variation with respect to $\tilde g$, namely:
\[
|\delta^{\tilde g}V(X)| = \left| \int_{B_3 \times G(n,n+1)}\mathrm{div}^{\tilde g}_SX \,dV(x,S) \right| \leq b \int|X|_{\tilde g}\,d\mu_V, \qquad \text{for any $C^1$ vector field $X$ on $B^3$}.
\]

Then for any $\xi \in B_1$ and for any $0 < \sigma \leq \rho < 1$:
\[
e^{\mathfrak{c}\sigma}\cdot\frac{\|V\|(B(\xi,\sigma))}{\sigma^n} \leq (1+\mathfrak{a})e^{c(1+\mathfrak{a})\rho}\cdot\frac{\|V\|(B(\xi,(1+\mathfrak{a})\rho))}{\rho^n}
\]
where $B(\xi, r)$ denotes the Euclidean ball of radius $r$ centered at $\xi$. 

More generally, for any smooth, nonnegative function $h$ defined on $B$ and $\xi$, $\sigma$, and $\rho$ as above,
\[
\frac{e^{\mathfrak{c}\sigma}}{\sigma^n}\int_{B(\xi,\sigma)}h\,d\|V\| \leq (1+\mathfrak{a})e^{c(1+\mathfrak{a})\rho}\cdot\left[\frac{1}{\rho^n} \int_{B(\xi,(1+\mathfrak{a})\rho)} h\,d\|V\| + \int_\sigma^{(1+\mathfrak{a})\rho}\int_{B(\xi,\tau)\times G(n+1,n)}\frac{|\nabla_Sh|}{\tau^n}\,dV(x,S)\,d\tau\right]
\]
where $|\nabla_Sh|$ is the norm of the gradient of $h$ along $S$ computed with respect to $g_{eucl}$. Moreover,  $\mathfrak{a}(\eta,b)$ and $|\mathfrak{c}(\eta,b) - b|$ converge to $0$ as $\eta \to 0$, uniformly with respect to $b$.
\end{lemma}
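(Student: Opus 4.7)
The plan is to adapt the standard radial-cutoff proof of the monotonicity formula (as in Simon's lectures, or Song's Lemma 2 in \cite{song2018existence}) to the present almost-Euclidean setting with bounded (rather than vanishing) first variation and an auxiliary weight $h$. First I would fix a smooth nonincreasing cutoff $\phi \colon [0,\infty) \to [0,1]$ with $\phi \equiv 1$ on $[0,1/2]$ and $\phi \equiv 0$ on $[1,\infty)$, and for $\xi \in B_1$, $\rho \in (0,1)$, and a smooth nonnegative function $h$, test $\delta^{\tilde g}V$ against the vector field $X(x) = \phi(|x-\xi|/\rho)\, h(x)\, (x-\xi)$, where $|\cdot|$ denotes the Euclidean norm. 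Setting $r = |x-\xi|$ and $I_h(\rho) := \int \phi(r/\rho)\, h \, d\|V\|$, the Euclidean computation
\[
\mathrm{div}^{\mathrm{eucl}}_S X = n\phi h + \phi'(r/\rho)\tfrac{r}{\rho}\, h - \phi'(r/\rho)\tfrac{|P_{S^\perp}(x-\xi)|^2}{\rho\, r}\, h + \phi\, \nabla_S h \cdot (x-\xi)
\]
is standard; after integration against $V$ this yields $n I_h - \rho I_h' \leq \delta^{\mathrm{eucl}}V(X) + \int \phi\,|\nabla_S h|\, r \, dV$, using that $\phi' \leq 0$ and that the $|P_{S^\perp}(x-\xi)|^2$ term is nonnegative.

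Next I would quantify how the perturbation from $g_{\mathrm{eucl}}$ to $\tilde g$ distorts each ingredient. The hypothesis $\|\tilde g - g_{\mathrm{eucl}}\|_{C^1(B_3)} \leq \eta$ implies that (i) norms satisfy $|Y|_{\tilde g} = (1+O(\eta))|Y|$; (ii) Christoffel symbols of $\tilde g$ are $O(\eta)$, so $\mathrm{div}^{\tilde g}_S X = \mathrm{div}^{\mathrm{eucl}}_S X + E_\eta$ with $|E_\eta| \leq C\eta (|X| + |DX|) \leq C\eta\,(1+\rho)\chi_{B(\xi,\rho)}\cdot\|h\|_{C^0}$; and (iii) Euclidean and $\tilde g$-balls of a given radius differ by a factor $1 + \mathfrak{a}(\eta)$ with $\mathfrak{a}(\eta) \downarrow 0$ as $\eta \downarrow 0$. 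The $b$-bounded first variation hypothesis then gives
\[
|\delta^{\tilde g}V(X)| \leq b \int |X|_{\tilde g}\, d\mu_V \leq (1+C\eta)\, b\,\rho\, I_h(\rho).
\]
Combining with the divergence identity and absorbing the $E_\eta$ error into the left-hand side produces the key differential inequality
\[
\rho I_h'(\rho) - n I_h(\rho) \geq -\mathfrak{c}(\eta,b)\,\rho\, I_h\bigl((1+\mathfrak{a})\rho\bigr) - \rho\int_{B(\xi,(1+\mathfrak{a})\rho)\times G(n,n+1)} |\nabla_S h|\, dV,
\]
where $\mathfrak{c}(\eta,b) \to b$ and $\mathfrak{a}(\eta) \to 0$ as $\eta \to 0$. (The $(1+\mathfrak{a})\rho$ on the right reflects that the error $E_\eta$, a priori measured in $\tilde g$, must be integrated over an enlarged Euclidean ball.)

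Dividing by $\rho^{n+1}$, multiplying by the integrating factor $e^{\mathfrak{c}\rho}$, and rewriting the left-hand side as $\frac{d}{d\rho}\bigl(e^{\mathfrak{c}\rho}\rho^{-n} I_h(\rho)\bigr)$ (up to the $(1+\mathfrak{a})$-distortion that is absorbed into the constants), I would integrate from $\sigma$ to $(1+\mathfrak{a})\rho$. Letting $\phi$ approach the characteristic function of $[0,1]$ converts $I_h(\rho)$ into $\int_{B(\xi,\rho)}h\, d\|V\|$ on the left and into the corresponding quantity over the enlarged ball on the right, producing the stated inequality; the case $h \equiv 1$ recovers the volume monotonicity. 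The main technical obstacle is the bookkeeping of the three independent sources of distortion (metric perturbation in the divergence, in the norm $|X|_{\tilde g}$, and in the comparison of Euclidean vs. $\tilde g$-balls) in a way that simultaneously shows $\mathfrak{a}(\eta,b) \to 0$ and $|\mathfrak{c}(\eta,b) - b| \to 0$ uniformly in $b$ as $\eta \to 0$; a careful choice of which ball to integrate $E_\eta$ over, and keeping all $O(\eta)$ corrections in a form that multiplies only $I_h$ itself (rather than $I_h'$), is what makes this uniformity work.
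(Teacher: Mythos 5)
Your approach is close in spirit to the paper's (which itself follows Song's Lemma~2 and Simon's monotonicity argument), but the two diverge at a structural point that makes your error bookkeeping break down for the weighted version.

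The paper does not treat the perturbation of the divergence as an additive error on top of the Euclidean computation. Instead, it works with the radial function $r_\xi$ of the \emph{frozen metric} $g_\xi = \tilde g(\xi)$ (not the Euclidean radial function) and the vector field $X = \gamma(r_\xi)\,h\,\tfrac12\tilde\nabla r_\xi^2$, and computes $\tilde{\mathrm{div}}_S X$ exactly in $\tilde g$. The key is that the $\gamma'$ term then appears as $r_\xi\gamma'(r_\xi)\,h\,\|\tilde\nabla_S r_\xi\|_{\tilde g}^2$, with the $\|\tilde\nabla_S r_\xi\|_{\tilde g}^2$ factor kept exactly; this factor is folded into the definition of the monotone quantity $\tilde I(\rho) = \int \varphi(r_\xi/\rho)\,h\,\|\tilde\nabla r_\xi\|_{\tilde g}^2\,dV$, so that the $O(\eta)$ perturbation becomes a \emph{multiplicative} factor $(1+c'(\eta))^{\pm 1}$ on the quantity being differentiated, not an additive error term. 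The only genuinely additive error is the $-2Cr_\xi\gamma(r_\xi)\,h\,\|\tilde\nabla r_\xi\|_{\tilde g}^2$ piece, which is pointwise proportional to the integrand of $\tilde I(\rho)$ itself, hence absorbable into the coefficient $\mathfrak{c}$ of the integrating factor.

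Your proposal, by contrast, uses the Euclidean radial field and lumps the perturbation into an additive $E_\eta$ with $|E_\eta| \leq C\eta(|X| + |DX|)$. Two problems follow. First, $|DX|$ contains the term $\phi'(r/\rho)\,\tfrac{r}{\rho}\,h$, which is supported on the annulus $\{\rho/2 \lesssim r \lesssim \rho\}$ and is \emph{not} pointwise dominated by the integrand of $I_h(\rho) = \int \phi(r/\rho)\,h\,d\|V\|$ (indeed $\phi$ may already vanish there while $\phi'$ does not). Integrating $E_\eta$ against $V$ therefore produces an error concentrated on an annulus that cannot be absorbed multiplicatively into $I_h(\rho)$; the paper sidesteps this precisely by keeping the $\phi'$ term as an exact $-\rho\,\partial_\rho$ of $\tilde I$. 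Second, your written bound $|E_\eta| \leq C\eta(1+\rho)\chi_{B(\xi,\rho)}\|h\|_{C^0}$ replaces the pointwise $h(x)$ by the sup-norm; the resulting error term $\eta\|h\|_{C^0}\|V\|(B(\xi,\rho))$ need not be comparable to $I_h(\rho)$ if $h$ is small somewhere, so the differential inequality is vacuous in the weighted case. (For $h \equiv 1$ this issue disappears, so your sketch would recover the unweighted monotonicity, but the lemma requires the general $h$.)

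Finally, your intermediate inequality $\rho I_h'(\rho) - nI_h(\rho) \geq -\mathfrak{c}\rho\,I_h((1+\mathfrak a)\rho) - \ldots$ is nonlocal in $\rho$ and cannot be integrated by the exponential factor $e^{\mathfrak{c}\rho}$ as stated. In the paper's argument the $(1+\mathfrak a)$ enlargement enters only once, at the very end, when $g_\xi$-balls are compared to Euclidean balls with $\mathfrak{a} = (1+c')^{n+2}-1$; it does not appear in the ODE. Keeping the entire derivation in $g_\xi$-balls (or, if you insist on the Euclidean radial field, carefully re-deriving the divergence estimate so that the $\phi'$ coefficient is a perturbed-but-multiplicative factor rather than an additive error) is what makes the integrating-factor step legitimate and gives the advertised uniformity of $\mathfrak a(\eta,b)$ and $|\mathfrak c(\eta,b)-b|$ as $\eta\to 0$.
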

%
%
\begin{proof}
We follow closely Antoine’s proof of the monotonicity formula, Lemma 2, which is inspired by Simon’s proof in the case of stationary/bounded variation.

For $\xi \in B_1$, we let $g_\xi$ be the flat metric $\tilde g(\xi)$ on $B_3$. We also write $r_\xi(x) = \|x-\xi\|_{g_\xi}$ to denote the distance function from $\xi$ with respect to $g_\xi$, and denote by $\tilde \nabla$, $\tilde \nabla_S$ and $\tilde{\mathrm{div}}_S$ the Levi-Civita connection of the metric $\tilde g$, its projection onto an $n$-plane, $S$, and the divergence operator along $S$ computed with the connection $\tilde \nabla$, respectively. Note that by the assumption on the metrics, we have $(1-\eta)g_{eucl}\leq g_\xi \leq (1+\eta)g_{eucl}$, in the sense of bilinear forms. 

Pick $\epsilon>0$ such that $(1+\epsilon)\rho<2$, and let $\gamma(t)=\varphi(t/\rho)$, where $\varphi \colon \R \to [0,1]$ is a smooth cutoff function such that $\varphi'\leq 0$, $\varphi(t) \equiv 1$ for $t \leq 1$ and $\varphi(t) \equiv 0$ for $t\geq 1+\epsilon$.

Let $h$ be a smooth nonnegative function on $B_3$, and consider the smooth, compactly supported vector field $X$ defined by $X_x = \gamma(r_\xi(x))h(x)\frac{1}{2} \tilde \nabla r_\xi^2(x)$, for $x \in B_3$. The divergence of $X$ was computed in \cite[equation (27)]{song2018existence} as follows: let $(x,S)\in B_3 \times G(n,n+1)$ be an $n$-plane at $x \neq \xi$, and pick an $\tilde g$-orthonormal basis $e_1,\ldots,e_n$ for $S$.  Then
\[
\tilde{\mathrm{div}}_SX \geq r_\xi \gamma'(r_\xi) h \|\tilde\nabla_Sr_\xi\|_{\tilde g}^2 + r_\xi\gamma(r_\xi)\tilde g(\tilde \nabla r_\xi,\tilde \nabla_S h) + n \gamma(r_\xi)h\|\tilde\nabla r_\xi\|_{\tilde g}^2-2Cr_\xi\gamma(r_\xi)h\| \tilde \nabla r_\xi\|_{\tilde g}^2,
\]
where $C$ is a positive constant depending only on $\eta$, and which can be chosen arbitrarily small as $\eta \to 0$.

Since $t\gamma^\prime(t) = \frac{t}{\rho}\varphi^\prime(t/\rho) = -\rho \frac{\partial}{\partial \rho}\left(\varphi(t/\rho) \right)$, we compute, using the bounded first variation (and also that $r_\xi \gamma(r_\xi) \leq (1+\epsilon) \rho \varphi(r_\xi/\rho)$):

\begin{align*}
    & n\int \varphi(r_\xi/\rho)h\|\tilde\nabla r_\xi\|_{\tilde g}^2\,dV - \rho \frac{\partial}{\partial \rho} \int \varphi(r_\xi/\rho) h \|\tilde\nabla_Sr_\xi\|_{\tilde g}^2\,dV \\[3pt] & \qquad \leq b\int r_\xi\gamma(r_\xi)h\|\tilde\nabla r_\xi\|_{\tilde g}\,d\mu_V - \int r_\xi\gamma(r_\xi)\tilde g(\tilde \nabla r_\xi,\tilde \nabla_S h)\,dV + 2C(1+\epsilon)\rho \int \varphi(r_\xi/\rho)h\|\tilde\nabla r_\xi\|^2_{\tilde g}\,dV\\[3pt]
    &\qquad \leq (1+\epsilon)b\cdot\rho\int \varphi(r_\xi/\rho)h\|\tilde\nabla r_\xi\|_{\tilde g}\,dV - \int r_\xi\gamma(r_\xi)\tilde g(\tilde \nabla r_\xi,\tilde \nabla_S h)\,dV + 2C(1+\epsilon)\rho \int \varphi(r_\xi/\rho)h\|\tilde\nabla r_\xi\|^2_{\tilde g}\,dV
\end{align*}
Let $\tilde I(\rho) = \int \varphi(r_\xi/\rho)h\|\tilde\nabla r_\xi\|_{\tilde g}^2\,dV$. If we write $\tilde \nabla=\tilde \nabla_{S^\perp} + \tilde \nabla_S$ (orthogonal decomposition w.r.t. $\tilde g$), then

\[
n\tilde I(\rho) - \rho \frac{\partial}{\partial \rho}\tilde I(\rho) = n\int \varphi(r_\xi/\rho)h\|\tilde\nabla r_\xi\|_{\tilde g}^2\,dV-\rho \frac{\partial}{\partial \rho}\int \varphi(r_\xi/\rho) h \left(\|\tilde\nabla_Sr_\xi\|_{\tilde g}^2+\|\tilde\nabla_{S^\perp}r_\xi\|_{\tilde g}^2\right)\,dV 
\]
and hence
\begin{align*}
 n\tilde I(\rho) - \rho \frac{\partial}{\partial \rho}\tilde I(\rho) & \leq-\rho \frac{\partial}{\partial \rho}\int \varphi(r_\xi/\rho) h \|\tilde\nabla_{S^\perp}r_\xi\|_{\tilde g}^2\,dV +(1+\epsilon)b\cdot\rho\int \varphi(r_\xi/\rho)h\|\tilde\nabla r_\xi\|_{\tilde g}\,dV \\[3pt] & \qquad \qquad - \int r_\xi\gamma(r_\xi)\tilde g(\tilde \nabla r_\xi,\tilde \nabla_S h)\,dV  + 2(1+\epsilon)C\rho \tilde I(\rho)
\end{align*}

Using the comparison between the metrics $g_\xi$ and $g_{eucl}$, we see that there exists a positive constant $c'=c'(\eta)$ such that 
\[1 \leq (1+c^\prime)\|\tilde \nabla r_\xi\|_{\tilde g}^2\leq (1+c^\prime)^2 \quad \text{and} \quad \lim_{\eta \to 0^+}c^\prime(\eta)=0.\] 
(see also \cite[p. 31]{song2018existence}). Thus, $\|\tilde\nabla r_\xi\|_{\tilde g} \leq \sqrt{1+c'} \cdot \|\tilde\nabla r_\xi\|_{\tilde g}^2$ and
\[
n\tilde I(\rho) - \rho \frac{\partial}{\partial \rho}\tilde I(\rho) \leq-\rho \frac{\partial}{\partial \rho}\int \varphi(r_\xi/\rho) h \|\tilde\nabla_{S^\perp}r_\xi\|_{\tilde g}^2\,dV - \int r_\xi\gamma(r_\xi)\tilde g(\tilde \nabla r_\xi,\tilde \nabla_S h)\,dV  + (1+\epsilon)(2C+b\cdot\sqrt{1+c'})\rho \tilde I(\rho)
\]
Therefore,
\begin{align*}
\frac{\partial}{\partial \rho}\left[ e^{(1+\epsilon)(2C+b\cdot\sqrt{1+c'})\cdot \rho} \frac{\tilde I(\rho)}{\rho^n} \right] &= \frac{e^{(1+\epsilon)(2C+b\cdot\sqrt{1+c'})\rho}}{\rho^{n+1}}\cdot \left[\rho\frac{\partial}{\partial\rho}\tilde I(\rho) - n\tilde I(\rho) +(1+\epsilon)(2C+b\cdot\sqrt{1+c'})\rho\tilde I(\rho)   \right] \\[3pt]
&\geq \frac{e^{(1+\epsilon)(2C+b\cdot\sqrt{1+c'})\rho}}{\rho^{n+1}}\Big[\rho \frac{\partial}{\partial \rho}\int \varphi(r_\xi/\rho) h \|\tilde\nabla_{S^\perp}r_\xi\|_{\tilde g}^2\,dV \\
& \qquad \qquad \qquad \qquad \qquad \qquad  + \int r_\xi\gamma(r_\xi)\tilde g(\tilde \nabla r_\xi,\tilde \nabla_S h)\,dV \Big]
\end{align*}

Choose $\mathfrak{c}=2C+b\cdot\sqrt{1+c'}$ and use Cauchy-Schwarz $\tilde g(\tilde \nabla r_\xi,\tilde \nabla_S h) \geq -\|\tilde\nabla r_\xi\|_{\tilde g}\|\tilde\nabla_S h\|_{\tilde g}$  (and once again $r_\xi \gamma(r_\xi) \leq (1+\epsilon) \rho \varphi(r_\xi/\rho)$) to get

\[
\frac{\partial}{\partial \rho}\left[ e^{(1+\epsilon)\mathfrak{c}\cdot \rho} \frac{\tilde I(\rho)}{\rho^n} \right]\geq \frac{e^{(1+\epsilon)\mathfrak{c}\rho}}{\rho^{n}}\left[\frac{\partial}{\partial \rho}\int \varphi(r_\xi/\rho) h \|\tilde\nabla_{S^\perp}r_\xi\|_{\tilde g}^2\,dV- \int \varphi(r_\xi/\rho) \|\tilde\nabla r_\xi\|_{\tilde g}\|\tilde\nabla_S h\|_{\tilde g}\,dV \right]
\]

Integrating this inequality from $\sigma$ to $\rho$, and taking the limit $\epsilon \to 0$ to make $\varphi \to \chi_{(-\infty,1]}$, we get (see also Simon’s, page 90): 
\begin{align*}
 e^{(1+\epsilon)\mathfrak{c}\cdot \rho} \frac{\tilde I(\rho)}{\rho^n} -  e^{(1+\epsilon)\mathfrak{c}\cdot \sigma} \frac{\tilde I(\sigma)}{\sigma^n} & \geq e^{\mathfrak{c}\sigma}\int_{(B^{\tilde g}(\xi,\rho)\setminus B^{\tilde g}(\xi,\sigma))\times G(n,n+1)} h(x) \frac{\|\tilde\nabla_{S^\perp}r_\xi(x)\|_{\tilde g}^2}{r_\xi^n(x)}\,dV(x,S)  \\[3pt]& \qquad - e^{\mathfrak{c}\rho}\int_\sigma^\rho\frac{1}{\tau^{n}}\int_{B^{\tilde g}(\xi,\tau)\times G(n,n+1)} \|\tilde\nabla r_\xi(x)\|_{\tilde g}\|\tilde\nabla_S h(x)\|_{\tilde g}\,dV(x,S)\,d\tau
\end{align*}
%
%
The proof is concluded using the same estimates and metric comparisons as in \cite[p. 32]{song2018existence}, choosing $\mathfrak{a}=(1+c’)^{n+2}-1$.
\end{proof}
\subsection{A quantitative Maximum Principle in bumpy metrics} \label{quantMPSection}
In this section, present a quantitative maximum principle for PMCs in $M$ - i.e. if $Y$ is an $h$ PMC satisfying assumptions \ref{hCompactSupport} or \ref{hZeroBoundary}, then if $Y \cap \Sigma = \emptyset$, it must lie a finite distance from $\Sigma$ depending on $h$, the index of $Y$, and the area of $Y$. \nl \nl 
First, suppose that $\Sigma = \partial M$ is minimal with a foliation given by a contracting neighborhood, $\{\Sigma_t\}$, such that an open neighborhood $U \supseteq \Sigma$ is diffeomorphic to $U \cong \Sigma \times [0, \hat{t}]$ in fermi coordinates. 
%
\begin{theorem}[Quantitative Maximum Principle] \label{QMPThm}
Let $Y \subseteq M$ be a closed, almost embedded PMC with curvature $H = \pm h$ such that $Y \cap \Sigma = \emptyset$, $Y = \partial \Omega$ 
for some $\Omega \in \CC(M)$, $A(Y) \leq K$, $\text{Ind}(Y) \leq I$, $||h||_{C^{1,\alpha}} \leq C$, and $h$ satisfying assumptions \ref{hCompactSupport} or \ref{hZeroBoundary}. There exists $d = d(K, I, C) > 0$ such that $\text{dist}(Y, \Sigma) \geq d$
\end{theorem}
\begin{proof}
Suppose no such $d$ exists, then for some combination of $K, I, C$, we have a sequence of closed PMCs, $\{Y_i = \partial \Omega_i\}$, with $A(Y_i) \leq K$, $\text{Ind}(Y_i) \leq I$, $||h_i||_{C^{1,\alpha}} \leq C$, and 
\[
\lim_{i \to \infty} \text{dist}(Y_i, \Sigma) = 0
\]
By Arzela--Ascoli, $h_i \xrightarrow{C^1} h \in C^1(M)$ satisfying the analogous conditions. Moreover, by compactness, there exists $p_i \in Y_i$ such that $\text{dist}(Y_i, \Sigma) = \text{dist}(p_i, \Sigma)$. Applying theorem \ref{thm:compactness for FPMC}, we get that $Y_i \xrightarrow{i \to \infty} V_{\infty}$, $\text{supp}(V_{\infty}) = Y_{\infty}$ a closed $C^1$ hypersurface in $M$ with mean curvature $\pm h$, and $p_i \to p_{\infty} \in \Sigma \cap Y_{\infty}$. Note that theorem \ref{thm:compactness for FPMC} applies verbatim as long as $Y_i = \partial \Omega_i$ (i.e. even if the prescribing function $h_i \not \in \mathcal{S}(g)$) . \nl  
\indent However, now by the maximum principle, we have that some component of $Y_{\infty}$ must be $\Sigma$, and so 
\[
V_{\infty} = \sum_{i = 1}^m a_i \Sigma_i + W_{\infty}
\]
where $\text{supp}(W_{\infty}) \cap \Sigma = \emptyset$ and $a_i \geq 0$ with $\sum_{i = 1}^m a_i \geq 1$. This would imply that some component of the $Y_i$ converges to $\Sigma$, and by theorem \ref{thm:compactness for FPMC} this convergence happens graphically away from finitely many points. However, by the ``no-pinching"/tethering to the core argument of proposition \ref{NoPinchingLemma}, this cannot happen, as every component of $Y_i$ has a point, $q_i$, with $t(q_i) \geq \hat{t}$. This is a contradiction.
\end{proof}
\noindent We can also make the assumption that $\Sigma$ is non-degenerate and relax our conditions on $h$:
\begin{theorem}[Non-degenerate Quantitative] \label{NonDegenMPThm}
Suppose $(M^{n+1}, g)$ such that $\Sigma = \partial M$ is a non-degenerate minimal surface. Let $Y \subseteq M$ be a closed, almost embedded PMC with curvature $H = \pm h$ such that $Y = \partial \Omega$ for some $\Omega \in \CC(M)$, $h\Big|_{\Sigma} = \p_{\nu} h \Big|_{\Sigma} = 0$,  $Y \cap \Sigma = \emptyset$, $A(Y) \leq K$, $\text{Ind}(Y) \leq I$, and $||h||_{C^{1,\alpha}} \leq C$. There exists $d = d(K, I, C) > 0$ such that $\text{dist}(Y, \Sigma) \geq d$.
\end{theorem}
\begin{proof}
The proof proceeds similarly: for some $K, I, C > 0$, we can find surfaces, $Y_i$, and prescribing functions, $h_i$, such that $H_{Y_i} = \pm h_i$ and the $Y_i$ are arbitrarily close to $\Sigma$. Up to subsequence, $h_i \xrightarrow{C^1} h$, and using the compactness of PMCs with bounded area and index, we get our decomposition of 
\[
Y_{i} \to V_{\infty} = \sum_{j = 1}^m a_j \Sigma_j + W_{\infty}
\]
we note that $a_i \geq 1$ for some $i$, and the graphical convergence (away from $I$ points) from theorem \ref{thm:compactness for FPMC} allows us to construct a Jacobi field of $\Sigma_i$ as follows: consider the lowest sheet of the connected component of $Y_i$ that converges graphically to $\Sigma_j$ with $a_j \geq 1$. Then we can represent this sheet, $S_i$, as 
\[
S_i = \exp_{\Sigma_j}(u_{i,j}(s) \nu)
\]
Considering the mean curvature of $S_i$ in comparison to $\Sigma$, we get 
\begin{align*}
H_{S_i} - H_{\Sigma} &= J_{\Sigma} (u_{i,j}) + O(u_{i,j}^2) \\
H_{S_i} - H_{\Sigma} &= h_i(s, u_{i,j}(s)) - 0 \\
&= h_i(s, u_{i,j}(s)) - h_i(s, 0) \\
&= (\p_{\nu} h_i)(s) \cdot u_{i,j} + O(u_{i,j}^2) \\
&= O(u_{i,j}^2)
\end{align*}
where $J_{\Sigma}$ is the jacobi operator on $\Sigma$ and we've used that
all $h_i$ satisfy $h_i \Big|_{\Sigma} = \p_{\nu} h_i \Big|_{\Sigma} \equiv 0$. Normalizing both sides by $||u_{i,j}||_{C^0}$ and sending $i \to \infty$ yields a Jacobi field, a contradiction. 
\end{proof}
In theorems \ref{QMPThm} \ref{NonDegenMPThm}, we can actually replace $||h||_{C^{1,\alpha}(M)}$ with $||h||_{C^{1,\alpha}(\Sigma \times [0, \hat{t}])}$, i.e. we only need $C^{1,\alpha}$ bounds near the boundary. This yields the same theorem.
\bibliographystyle{amsplain}
\bibliography{main}

\end{document}